\newcommand*{\doublerightarrow}[2]{\mathrel{
  \settowidth{\@tempdima}{$\scriptstyle#1$}
  \settowidth{\@tempdimb}{$\scriptstyle#2$}
  \ifdim\@tempdimb>\@tempdima \@tempdima=\@tempdimb\fi
  \mathop{\vcenter{
    \offinterlineskip\ialign{\hbox to\dimexpr\@tempdima+1em{##}\cr
    \rightarrowfill\cr\noalign{\kern.5ex}
    \rightarrowfill\cr}}}\limits^{\!#1}_{\!#2}}}
\def\aa{{\rm a}}
\def\ee{{\rm e}}
\def\ii{{\rm i}}
\def\ia{{\rm ia}}
\def\Zz{{\rm Z}}
\def\rs{{\rm S}}
\def\Kbar{\bar{K}}
\def\ZSG{T}
\def\cR{\hat{G}/\!\!\sim}
\def\ccR{\hat{G}/\sim}
\def\cRd{\hat{G}'/\!\!\sim}
\def\ccRd{\hat{G}'\!/\sim}
\def\rC{{\rm C}}
\def\rS{{\rm S}}
\def\Funcclass{\mathcal{R}}
\def\sqsum{\gamma}
\def\tempfac{t}
\def\quadsub{k}
\def\uB{\mathbf{B}}
\def\us{\mathbf{s}}
\def\uu{\mathbf{u}}
\def\uv{\mathbf{v}}
\def\binfty{\boldsymbol{\infty}}
\def\numdiv{\sigma_0}
\DeclareMathOperator{\comp}{c}
\subjclass[2010]{11R29, 11R23, 60F99}
\begin{document}

\title[On class groups of random number fields]
{On class groups of random number fields}
\author{Alex Bartel$^1$}
\address{$^1$School of Mathematics and Statistics, University of Glasgow, University Place,
Glasgow G12 8QQ, United Kingdom}
\author{Hendrik W. Lenstra Jr.$^2$}
\address{$^2$Mathematisch Instituut, Universiteit Leiden, Postbus 9512, 2300 RA Leiden, The Netherlands}
\email{alex.bartel@glasgow.ac.uk, hwl@math.leidenuniv.nl}
\date{\today}

\begin{abstract}
The main aim of the present paper is to disprove the Cohen--Lenstra--Martinet 
heuristics in two different ways and to offer possible corrections. We also 
recast the heuristics in terms of Arakelov class groups, giving an 
explanation for the probability weights appearing in the general form of the 
heuristics. We conclude by proposing a rigorously formulated
Cohen--Lenstra--Martinet conjecture.
%
\end{abstract}
\maketitle
\setcounter{tocdepth}{1}
\tableofcontents

\section{Introduction}\noindent
The Cohen--Lenstra--Martinet heuristics \cite{CL,CM} make predictions on the
distribution of class groups of ``random'' algebraic number fields. In the
present paper, we disprove the predictions in two different
ways, and propose possible adjustments. In addition, we show that the heuristics
can be equivalently formulated in terms of Arakelov class groups of number fields.
This formulation has the merit of conforming to the general expectation that a
random mathematical object is isomorphic to a given object $A$ with a probability
that is inversely proportional to $\#\Aut A$. We end by offering two rigorously
formulated Cohen--Lenstra--Martinet conjectures. In particular, we give two possible
precise definitions of the notion of a ``reasonable function'' occurring in
\cite{CL,CM}. The conjectures can likely be further sharpened and strengthened.
We will be pointing out concrete avenues for further research throughout the paper.

The class group $\Cl_F$ of a number field $F$ is in a natural way a module over
$\Aut F$. Our first disproof of the heuristics proceeds by pointing out
restrictions on the possible module structure that had not been taken into
account. To
explain this, recall that for a
ring $R$, the Grothendieck group $\G(R)$ of the category of finitely generated
$R$-modules has one generator $[L]$ for every finitely generated $R$-module $L$,
and one defining relation $[L]+[N]=[M]$ for every short exact sequence
$0\to L\to M\to N\to 0$ of finitely generated $R$-modules.

The rings $R$ that we will mainly be interested in are certain group rings.
For a set $S$ of prime numbers, we write
$\Z_{(S)}=\{a/b: a$, $b\in \Z$, $b\not\in \bigcup_{p\in S\cup\{0\}}p\Z\}$,
which is a subring of $\Q$. Let now $G$ be a finite abelian group,
and let $S$ be a set of prime numbers not
dividing $\#G$. Then if $R$ is a quotient of the group ring
$\Z_{(S)}[G]$, the torsion subgroup $\G(R)_{\tors}$ of $\G(R)$ may
be identified with a product of class groups of certain cyclotomic rings. For
example if $G$ is cyclic of prime order $p$, then
$\G(\Z_{(S)}[G])_{\tors}$ is naturally isomorphic to the class
group of $\Z_{(S)}[\zeta_p]$, where $\zeta_p$ denotes a primitive $p$-th
root of unity.
For more details the reader may consult Section \ref{sec:imagab}.
Assume now that $G$ has even order, and fix an element $c \in G$ of order $2$. 
The ring that occurs in our first disproof of the heuristics is the ring
$T^- = \Z_{(S)}[G]/(1 + c)$. Every finite $T^-$-module represents a class in
$\G(T^-)_{\tors}$. This applies, in particular, to the module
$T^- \otimes_{\Z[G]} \Cl_F$ if $F$ is a number field whose Galois group is
identified with $G$ such that $c$ acts as complex conjugation. As we will
explain in Section \ref{sec:imagab}, the Cohen--Lenstra--Martinet heuristics
predict that if $F$ runs through all such fields,
the class of $T^- \otimes_{\Z[G]} \Cl_F$ is equidistributed in
$\G(T^-)_{\tors}$. We will argue that this stands in contradiction to the 
following theorem, which we will prove in Section 4 as a consequence of the 
Iwasawa Main Conjecture for abelian fields, as proven by Mazur--Wiles in \cite{MW}.

%
If $F$ is a number field, let $\mu_F$ denote the group
of roots of unity in $F$.

\begin{theorem}\label{thm:introimagabelian}
Let $F/\Q$ be a finite imaginary abelian extension, let $G$ be its Galois group,
let $c\in G$ denote complex conjugation, let $S$ be a set of prime
numbers not dividing $\#G$, and let $\ZSG^-=\Z_{(S)}[G]/(1+c)$. Then we have
$[\ZSG^-\otimes_{\Z[G]}\Cl_F]=[\ZSG^-\otimes_{\Z[G]}\mu_F]$
in $\G(\ZSG^-)$.
\end{theorem}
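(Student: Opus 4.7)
The plan is to reduce the identity to the $T$-unit exact sequence and exploit the fact that, for imaginary abelian $F$, complex conjugation acts trivially on the Dirichlet regulator lattice. Choose a finite $G$-stable set $T$ of places of $F$ containing all archimedean places and enough finite primes so that the $T$-class group $\Cl_{F,T}$ vanishes, and let $T_f$ denote the finite primes in $T$. Then the generalised Dirichlet unit theorem provides a four-term exact sequence of finitely generated $\Z[G]$-modules
\[
0\to O_F^\times\to O_{F,T}^\times\to\Z[T_f]\to\Cl_F\to 0.
\]
Because $c$ has order $2$, $2\mid\#G$ and hence $2\notin S$, so $2$ is a unit in $\Z_{(S)}$; consequently the functor $(-)^-:=\ZSG^-\otimes_{\Z[G]}(-)$ coincides with projection onto the $(-1)$-eigenspace of $c$ on $(-)\otimes\Z_{(S)}$. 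Being a direct summand, this functor is exact on $\Z[G]$-modules.

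Complex conjugation fixes every (complex) archimedean place of $F$ and acts trivially on the absolute values $|\cdot|_v$ for $v\mid\infty$, hence trivially on $O_F^\times/\mu_F\cong\log O_F^\times$. If $u\in(O_F^\times)^-$ satisfies $cu=u^{-1}$, applying the $c$-invariant logarithm yields $\log u=-\log u$, whence $\log u=0$ and $u\in\mu_F$. Therefore $(O_F^\times)^-=\mu_F^-$. Applying $(-)^-$ to the $T$-unit sequence gives
\[
0\to\mu_F^-\to(O_{F,T}^\times)^-\to\Z[T_f]^-\to\Cl_F^-\to 0,
\]
and the alternating sum of classes in $\G(\ZSG^-)$ rearranges to
\[
[\Cl_F^-]-[\mu_F^-]=[\Z[T_f]^-]-[(O_{F,T}^\times)^-].
\]
The theorem is thus equivalent to proving $[(O_{F,T}^\times)^-]=[\Z[T_f]^-]$ in $\G(\ZSG^-)$.

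Since $\#G$ is invertible in $\Z_{(S)}$ and $G$ is abelian, $\ZSG^-$ is a product of Dedekind domains $R_\chi$ indexed by Galois orbits of odd characters of $G$, so $\G(\ZSG^-)=K_0(\ZSG^-)\cong\bigoplus_\chi(\Z\oplus\Cl(R_\chi))$. The two modules $(O_{F,T}^\times)^-$ and $\Z[T_f]^-$ are projective $\ZSG^-$-modules whose rationalisations are both isomorphic to $\Q[T_f]^-$, so their ranks on each simple component agree; what remains is the equality of their Steinitz classes in each $\Cl(R_\chi)$. This last Steinitz comparison is the main obstacle. I propose to handle it via Tate's canonical four-term exact sequence for $O_{F,T}^\times$, valid once $T$ also contains the primes ramified in $F/\Q$, whose outer terms are $O_{F,T}^\times$ and the augmentation kernel $\ker(\Z[T]\to\Z)$ (whose minus part is exactly $\Z[T_f]^-$, using once more the triviality of $c$ on $T_\infty$), and whose inner terms are cohomologically trivial, hence $\ZSG^-$-projective. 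The Euler characteristic of this sequence in $K_0(\ZSG^-)$ reduces the desired identity to the vanishing of the minus part of Chinburg's $\Omega_1$-invariant of $F/\Q$, which in the imaginary abelian case ultimately follows from Stickelberger's theorem together with the equivariant analytic class number formula.
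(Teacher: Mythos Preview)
Your reduction via the $T$-unit sequence to the identity $[(\cO_{F,T}^\times)^-]=[\Z[T_f]^-]$ is correct and sets up a genuinely different framework from the paper, which instead works character by character: using Mazur--Wiles it shows that for each odd $\chi$ the module $\chi(\ZSG)\otimes_{\Z[G]}\Cl_F$ has the same Jordan--H\"older series as $\chi(\ZSG)/(\beta(\chi))$ (respectively $\chi(\ZSG)/\fp_q\beta(\chi)$ at the Teichm\"uller character), where $\beta(\chi)$ is a generalised Bernoulli number, and then reads off the class in $\G(\ZSG^-)$ from an exact sequence with principal kernel.

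There is, however, a genuine gap in your last paragraph. First a minor correction: $(\cO_{F,T}^\times)^-$ is \emph{not} projective over $\ZSG^-$ when $\mu_F^-\neq 0$, since its torsion submodule is exactly $\mu_F^-$; this is repairable but means the Steinitz-class language is not quite right. Second, the Tate-sequence invariant is Chinburg's $\Omega_3$, not $\Omega_1$. The real gap is your final sentence: the assertion that the minus part of this invariant vanishes over $\ZSG^-$ ``by Stickelberger's theorem together with the equivariant analytic class number formula'' is not justified and, as far as is known, not justifiable by those tools alone. Stickelberger furnishes only an \emph{annihilator} of $\Cl_F^-$, and the analytic class number formula only its \emph{total order}; neither pins down the class of the characteristic ideal of $\chi(\ZSG)\otimes_{\Z[G]}\Cl_F$ in $\Cl(\chi(\ZSG))$, which is exactly what is at stake. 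Every known proof that the relevant Chinburg invariant takes the expected value for abelian extensions of $\Q$ (Greither, Burns--Greither, \ldots) ultimately relies on the Iwasawa Main Conjecture, i.e.\ on Mazur--Wiles. You have therefore not bypassed the deep input; you have hidden it inside a black box whose opening requires precisely the theorem the paper invokes directly.
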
\noindent
Theorem \ref{thm:introimagabelian} is a variant of a result of Greither
\cite[Theorem 5.5]{Greither}, who obtained a stronger conclusion
under some additional hypotheses.

One way of obtaining a contradiction between Theorem \ref{thm:introimagabelian}
and the Cohen--Lenstra--Martinet heuristics is as follows. Suppose that $G$ is cyclic of
order $58$, and $S$ consists of all prime numbers not dividing $58$. Then
$\G(\ZSG^-)_{\tors}$ is the class group of $\Z_{(S)}[\zeta_{29}]$, which is elementary
abelian of order $8$, and in Section \ref{sec:imagab}
we will deduce from Theorem \ref{thm:introimagabelian} that
$[\ZSG^-\otimes_{\Z[G]}\Cl_F]$ is trivial for all but one $F$, contradicting the
equidistribution prediction of the heuristics. In the Cohen--Lenstra--Martinet
conjecture that we propose below, we will remove this obstruction by requiring
the set $S$ to be finite, in which case one has $\G(\ZSG^-)_{\tors}=0$. To
formulate correct heuristics, or even conjectures, when $S$ is not assumed to be
finite remains an important problem, which we will turn to in a future paper.

Our second disproof indicates that enumerating fields by discriminant is 
fundamentally flawed in the context of Cohen--Lenstra--Martinet heuristics. 
The heuristics were initially formulated for number fields of degree $2$. 
Degree $1$ was skipped, since there is only one number field of degree $1$; its
class group is trivial, and does not obey a Cohen--Lenstra--Martinet law.
For a similar reason we shall, in our reformulated conjecture, require that the
class $\cF$ of fields be infinite (see the introduction to Conjecture
\ref{conj:ourconj} below). If $\cF$ were finite but non-empty, it would form a
discrete probability space. In that case the 
distribution of the class groups of the fields in $\cF$ would be influenced by 
the class group of each individual field in $\cF$, and when that is the case
one would generally not expect a Cohen--Lenstra--Martinet distribution.
Also, the fact that $\Q$ has class number $1$ affects other number fields,
simply because they have $\Q$ as a subfield. Indeed, if $F$ is a Galois
number field, $G$ is the Galois group, and $S$ is a set of prime numbers
not dividing $\#G$, then the subgroup of $G$-invariants of the group
$\Z_{(S)} \otimes_{\Z} \Cl_F$ equals $\Z_{(S)} \otimes_{\Z} \Cl_{\Q}$ and is 
therefore trivial, not obeying a Cohen--Lenstra--Martinet law.
Enumerating fields by discriminant
may lead to a difficulty that, imprecisely speaking, is a combination of the two
issues just outlined, namely it may induce a discrete probability distribution
on subfields. We will now give a concrete example of this situation, which
suffices to disprove the Cohen--Lenstra--Martinet heuristics.

For a real number $x$, let $\cC(x)$ be the set of cyclic quartic fields
with discriminant at most $x$ inside a fixed algebraic closure $\bar{\Q}$ of
$\Q$, and for a quadratic number field $k\subset \bar{\Q}$ let
$\cC_k(x) = \{F\in \cC(x) : k\subset F\}$.
A more precise version of the following result will be proven in Section
\ref{sec:quartics} as Theorem~\ref{thm:proportionquad}.
\begin{theorem}\label{thm:quartics}
  For every quadratic field $k$ there exists a real number $p_k$ satisfying
  $0\leq p_k<1$, computable to arbitrary precision, and such that:
  \begin{enumerate}[leftmargin=*,label={\upshape(\roman*)}]
    \item if the discriminant of $k$ is either negative or has at least one
      prime divisor that is congruent to $3\pmod{4}$, then $p_k=0$
      and $\cC_k(x)$ is empty for all real numbers $x$;
    \item otherwise one has
      $$
        \lim_{x\to \infty}\frac{\#\cC_k(x)}{\#\cC(x)}=p_k>0;
      $$
    \item one has $\sum_{k\subset \bar{\Q}}p_k=1$, with the sum running over all
      quadratic number fields $k$.
  \end{enumerate}
\end{theorem}\noindent
Thus, enumerating cyclic quartic fields by discriminant defines a
discrete probability distribution on the quadratic subfields. A concrete
counterexample to the Cohen--Lenstra--Martinet heuristics is obtained
from this as follows. Let
$\cC'(x)\subset \cC(x)$ be the subset of those $F\in \cC(x)$ for which
the class number of the quadratic subfield is not divisible by $3$. Then, as we
will explain in Section \ref{sec:quartics}, the Cohen--Lenstra--Martinet heuristics
predict that the limit
$\lim_{x\to \infty}\#\cC'(x)\big/\#\cC(x)$ exists and that
$\lim_{x\to \infty}\#\cC'(x)\big/\#\cC(x)\approx 0.8402$, where the notation
$a\approx b$ means that $a$
rounds to $b$ with the given precision. However, in Theorem \ref{thm:quarticsbody}
we will deduce from Theorem \ref{thm:quartics} that
the limit $\lim_{x\to \infty}\#\cC'(x)\big/\#\cC(x)$ does indeed exist, and one has
$$
\lim_{x\to \infty}\#\cC'(x)\big/\#\cC(x)\approx 0.9914,
$$
contradicting the heuristics.

We propose to use an order of
enumeration that, by work of Wood \cite{MelanieFair}, does not induce a
discrete probability distribution on subfields when the Galois group is abelian,
and is not expected to do so in the generality of our conjecture.
Enumerating number fields by discriminant has also been observed to pose
problems in the context of other questions in arithmetic statistics, which are
not obviously related to the Cohen--Lenstra--Martinet heuristics, see e.g.
\cite{MelanieFair, Agboola}. It would be interesting to classify all finite groups
$G$ such that enumerating $G$-extensions by discriminant exhibits the same bad
behaviour as described above. More broadly, a type of question that we first heard from
Wood is: which invariants of number fields with a given Galois structure
are ``admissible'' for the purposes of ordering number fields in the
Cohen--Lenstra--Martinet heuristics?

Let us now discuss the shift of perspective towards Arakelov class groups.
Let $F$ be a number field. For the definition of the Arakelov class group
$\Pic^0_F$ of $F$, we refer to \cite{Schoof}. It may be compactly described as
the cokernel of the natural map $\comp(\J_F)\to \comp(\J_F/F^\times)$, where $\J_F$
denotes the id\`ele group of $F$ (see \cite{CF}) and $\comp(X)$ denotes the maximal
compact subgroup of $X$; in particular, $\Pic^0_F$ is a compact abelian group.
We denote the Pontryagin dual of $\Pic^0_F$ by $\Ar_F$. It is an immediate
consequence of \cite[Proposition 2.2]{Schoof} that $\Ar_F$ is a finitely generated
discrete abelian group that fits in a short exact sequence of $\Aut F$-modules
\begin{eqnarray}\label{eq:DualArakelov}
0\to \Hom(\Cl_F,\Q/\Z) \to \Ar_F\to \Hom(\cO_F^\times,\Z)\to 0,
\end{eqnarray}
where $\cO_F$ denotes the ring of integers of $F$. Thus, knowing the torsion
subgroup of $\Ar_F$ is equivalent to knowing $\Cl_F$, and knowing its
torsion-free quotient amounts to knowing $\cO_F^\times$ modulo roots of unity.

The Arakelov class group of a number field is often better behaved than either
the class group or the integral unit group. As an example of this phenomenon,
we mention the following analogue of Theorem \ref{thm:introimagabelian} for
real abelian fields, which we will prove in Section \ref{sec:realab}, also relying
on the results of Mazur--Wiles \cite{MW}.
\begin{theorem}\label{thm:introrealabelian}
Let $F/\Q$ be a finite real abelian extension, let $G$ be its Galois group, let
$S$ be a set of prime numbers not dividing $2\cdot \#G$, and let $\ZSG=\Z_{(S)}[G]$.
Then we have the equality $[\ZSG\otimes_{\Z[G]}\Ar_F] = [\ZSG]-[\Z_{(S)}]$ in
$\G(\ZSG)$, where $[\Z_{(S)}]$ denotes the class of $\Z_{(S)}$ with the trivial
$G$-action.
\end{theorem}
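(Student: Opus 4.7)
The plan is to apply $\ZSG \otimes_{\Z[G]} (-)$ to the defining sequence \eqref{eq:DualArakelov} of $\Ar_F$, compute the two resulting classes in $\G(\ZSG)$, and invoke the Iwasawa Main Conjecture of Mazur--Wiles \cite{MW} to identify the class-group contribution. Since $\Z_{(S)}$ is a localization of $\Z$ and therefore $\Z$-flat, $\ZSG$ is $\Z[G]$-flat, so the tensored sequence remains exact and one obtains
\[
[\ZSG \otimes \Ar_F] \;=\; [\ZSG \otimes \Cl_F^\vee] \;+\; [\ZSG \otimes \Hom(\cO_F^\times, \Z)]
\]
in $\G(\ZSG)$, writing $\Cl_F^\vee = \Hom(\Cl_F, \Q/\Z)$.

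For the free summand I exploit that $F$ is totally real, so $\mu_F = \{\pm 1\}$, and that $2 \in \Z_{(S)}^\times$ kills it after tensoring; hence the summand equals $\ZSG \otimes \Hom(\cO_F^\times/\mu_F, \Z)$. By Dirichlet's unit theorem, the logarithmic embedding identifies $\cO_F^\times/\mu_F \otimes \Q$ with the augmentation ideal $I_G \otimes \Q$ as a $\Q[G]$-module, and self-duality of $\Q[G]$ passes this to $\Hom(\cO_F^\times,\Z)\otimes\Q$. Tensoring the augmentation sequence $0 \to I_G \to \Z[G] \to \Z \to 0$, and its $\Hom(-,\Z)$-dual $0 \to \Z \xrightarrow{N} \Z[G] \to \Hom(I_G,\Z) \to 0$, with $\ZSG$ yields $[\ZSG \otimes I_G] = [\ZSG \otimes \Hom(I_G,\Z)] = [\ZSG] - [\Z_{(S)}]$. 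Hence the theorem reduces to the torsion-only identity
\[
[\ZSG \otimes \Cl_F^\vee] \;=\; [\ZSG \otimes \Hom(I_G, \Z)] \;-\; [\ZSG \otimes \Hom(\cO_F^\times, \Z)]
\]
in $\G(\ZSG)$.

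For the last step, I would use cyclotomic units together with Mazur--Wiles. Sinnott's analysis of the group $C_F \subseteq \cO_F^\times$ of cyclotomic units, combined with the hypothesis $2\cdot\#G \in \Z_{(S)}^\times$ (which cancels Sinnott's index factors), provides a $\ZSG$-linear embedding $\ZSG \otimes I_G \hookrightarrow \ZSG \otimes (\cO_F^\times/\mu_F)$ whose cokernel has class $[\ZSG \otimes (\cO_F^\times/C_F)]$ in $\G(\ZSG)$. Applying $\Hom(-, \Z)$ to this sequence and using that $\cO_F^\times/\mu_F$ is $\Z$-free (so $\Ext^1_\Z$ on it vanishes) while $\Ext^1_\Z$ of a finite module is its Pontryagin dual produces
\[
0 \to \ZSG \otimes \Hom(\cO_F^\times/\mu_F, \Z) \to \ZSG \otimes \Hom(I_G, \Z) \to \ZSG \otimes (\cO_F^\times/C_F)^\vee \to 0,
\]
so the right-hand side of the previous display equals $[\ZSG \otimes (\cO_F^\times/C_F)^\vee]$. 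The equivariant Iwasawa Main Conjecture for real abelian fields, proved by Mazur--Wiles \cite{MW}, yields $[\ZSG \otimes \Cl_F] = [\ZSG \otimes (\cO_F^\times/C_F)]$ in $\G(\ZSG)$; applying Pontryagin duality functorially to both sides then gives $[\ZSG \otimes \Cl_F^\vee] = [\ZSG \otimes (\cO_F^\times/C_F)^\vee]$, as required.

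The main obstacle is this last use of Mazur--Wiles in the required equivariant form: their theorem is classically phrased as an equality of characteristic ideals of Iwasawa modules over the cyclotomic $\Z_p$-tower of $F$, and descending it cleanly to an equality of classes in $\G(\ZSG)$ at the finite level $F$ requires the refined version. The condition $2\cdot\#G \not\in S$ is used crucially, both to kill $\mu_F$ and the $2$-power part of Sinnott's index formula, and to ensure that $\Z_{(p)}[G]$ is a maximal order in $\Q[G]$ for every $p \in S$. Pontryagin duality must also be handled with care, since $[M]$ and $[M^\vee]$ need not coincide in $\G(\ZSG)$ (they differ by the involution $\chi \mapsto \chi^{-1}$ on characters of $G$); here the dualization is applied symmetrically to both sides of the Main Conjecture identification, which preserves the equality.
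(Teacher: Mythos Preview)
Your proposal is correct and follows essentially the same route as the paper. Both arguments split $[\ZSG\otimes\Ar_F]$ via the exact sequence \eqref{eq:DualArakelov}, identify the cyclotomic units $C_{(S)}$ with $\ZSG/(\sum_g g)\cong I_{G,(S)}$, and invoke Mazur--Wiles to match $[\cO^\times_{(S)}/C_{(S)}]$ with $[\Cl_{F,(S)}]$ before dualising; the paper proves the cyclotomic-unit structure directly (Proposition~\ref{prop:cycunits}) rather than citing Sinnott, and your worry about descending the Main Conjecture to finite level is resolved there by quoting \cite[Ch.~1, \S10, Theorem~1]{MW}, which is already stated at the level of $F$.
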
\noindent
As we will explain in Section \ref{sec:realab}, this theorem expresses that the
class of $\ZSG\otimes_{\Z[G]}\Ar_F$ in $\G(\ZSG)$ is ``as trivial as it can be'',
given the $\Q[G]$-module structure of $\Q[G]\otimes_{\Z[G]}\Ar_F$. There is no
reason to believe that an analogous result holds for either of the other two
terms in the exact sequence (\ref{eq:DualArakelov}).

We also reinterpret Theorems \ref{thm:introimagabelian} and
\ref{thm:introrealabelian} in terms of the so-called oriented Arakelov class
group, a notion that was introduced by Schoof in \cite{Schoof} and of which we
recall the definition in Section \ref{sec:realab}. That reinterpretation
results in a uniform statement for both theorems -- see Theorem \ref{thm:allabelian} --
and moreover in a statement that might conceivably hold for arbitrary finite
Galois extensions and that should point the way towards relaxing the
assumption in Conjecture \ref{conj:ourconj} that $S$ be finite. We intend to take
up this theme in a forthcoming paper.
%

Above we mentioned the principle that, if a mathematical object is ``randomly''
drawn, a given object appears with a probability that is inversely proportional
to the order of its automorphism group. In the context of Arakelov class groups,
however, the relevant automorphism groups are typically of infinite order.
In \cite{us}, we overcame this obstacle to applying the principle by means of
an algebraic theory, the consequences of which we now explain.

%
%
Let $G$ be a finite group, and let $A$ be a quotient of the group ring $\Q[G]$
by some two-sided ideal. If $p$ is a prime number and $S=\{p\}$, then we write
$\Z_{(p)}$ for $\Z_{(S)}$. We say that a prime number $p$ is \emph{good for $A$}
if there is a direct product decomposition $\Z_{(p)}[G]= J\times J'$, where $J$
is a maximal $\Z_{(p)}$-order in $A$, and the quotient map $\Z_{(p)}[G]\to A$
equals the projection $\Z_{(p)}[G]\to J$ composed with the inclusion $J\to A$.
For example, all prime numbers $p$ not dividing $\#G$ are good for all quotients
of $\Q[G]$. Let $S$
be a set of prime numbers that are good for $A$, and let $R$ denote the
image of $\Z_{(S)}[G]$ in~$A$. Let $\cM$ be a set of finite $R$-modules with
the property that for every finite $R$-module $M'$ there is a unique $M\in \cM$
such that $M\cong M'$, and let $\cP$ be a set of finitely generated projective
$R$-modules such that for every finitely generated projective $R$-module $P'$
there is a unique $P\in \cP$ such that $P\cong P'$. Note that $\cM$ and $\cP$
are countable sets.

Assume for the rest of the introduction that $S$ is finite. As we will show in
Section \ref{sec:commens}, it follows from our hypotheses
that for every finitely generated $A$-module $V$, there is a unique $P_V\in \cP$
such that $A\otimes_R P_V\cong_A V$; and that moreover for every finitely
generated $R$-module $M$ satisfying $A\otimes_R M\cong_A V$, there exists a
unique module $M_0\in \cM$ such that $M\cong P_V\oplus M_0$. For a finitely
generated $A$-module $V$, we define $\cM_V=\{P_V\oplus M_0: M_0\in \cM\}$.


Let $V$ be a finitely generated $A$-module.
In Section \ref{sec:commens}, we will deduce from \cite{us} that there is
a unique discrete probability measure $\bP$ on $\cM_V$
with the property that for any isomorphism $L\oplus E\cong M$ of $R$-modules,
where $L$ and $M$ are in $\cM_V$, and $E$ is finite, we have
$$
\bP(L)=(\Aut M:\Aut L)\cdot\bP(M),
$$
where the inclusion $\Aut L\subset \Aut M$ is the obvious one. This condition
expresses that one can think of $\bP(M)$ as being proportional to $1/\#\Aut M$.

We will now formulate an incomplete version of our conjecture on distributions
of Arakelov class groups, leaving the discussion of the crucial missing detail,
the notion of a ``reasonable'' function, to Section \ref{sec:conj}.
If $f$ is a complex valued function on $\cM_V$, then we define the expected
value of $f$ to be the sum $\bE(f)=\sum_{M\in \cM_V}(f(M)\cdot \bP(M))$ if the
sum converges absolutely. 

Let $G$, $A$, $S$, $R$, and $V$ be as above, and assume that $\sum_{g\in G}g=0$
in $A$. Let $K$ be a number field, let $\Kbar$
be an algebraic closure of $K$, and let $\cF$ be the set of all pairs
$(F,\iota)$, where $F\subset \Kbar$ is a Galois extension of $K$ that contains
no primitive $p$-th root of unity for any prime $p\in S$, and $\iota$ is an
isomorphism between the Galois group of $F/K$ and $G$ that induces an
isomorphism $A\otimes_{\Z[G]} \cO_F^\times\cong V$ of $A$-modules. Assume that
$\cF$ is infinite. For all $(F,\iota)\in \cF$, we will view $\Ar_F$ as a
$G$-module via the isomorphism $\iota$. Let $(F,\iota)\in \cF$ be arbitrary.
It follows from the exact sequence (\ref{eq:DualArakelov}) that we have an
isomorphism $A\otimes_{\Z[G]} \Ar_F\cong \Hom(V,\Q)$ of $A$-modules. Moreover,
every finitely generated $\Q[G]$-module is isomorphic to its $\Q$-linear dual (see e.g.
\cite[\S 10D]{CR}), so we have an isomorphism $\Hom(V,\Q)\cong V$ of $A$-modules.
Thus, $R\otimes_{\Z[G]}\Ar_F$ is isomorphic to a unique element of $\cM_V$.

If $F/K$ is a finite extension, let $c_{F/K}$ be the ideal norm of the product
of the prime ideals of $\cO_K$ that ramify in $F/K$. For a positive real number
$B$, let $\cF_{c\leq B}=\{(F,\iota)\in \cF: c_{F/K}\leq B\}$. If $M$ is a finitely
generated $R$-module satisfying $A\otimes_R M\cong_A V$, and $f$ is a function
defined on $\cM_V$, then we write $f(M)$
for the value of $f$ on the unique element of $\cM_V$ that is isomorphic to~$M$.
\begin{conjecture}\label{conj:ourconj}
Let $f$ be a ``reasonable'' complex valued function on $\cM_V$. Then the limit
$$
\lim_{B\to\infty} \frac{\sum_{(F,\iota)\in \cF_{c\leq B}}f(R\otimes_{\Z[G]}\Ar_F)}{\#\cF_{c\leq B}}
$$
exists, and is equal to $\bE(f)$.
\end{conjecture}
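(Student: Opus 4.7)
The plan is to attack Conjecture \ref{conj:ourconj} by the method of moments, which has been the successful route for all previously established instances of Cohen--Lenstra type statements. Granting a precise definition of ``reasonable'' from Section \ref{sec:conj}, the first step is to show that the class of reasonable functions is spanned, in a suitable topological sense, by the ``moment functions'' $f_N \colon M \mapsto \#\{R\text{-module surjections } M\twoheadrightarrow N\}$, where $N$ ranges over finitely generated $R$-modules with $A\otimes_R N$ a quotient of $V$. The expected value $\bE(f_N)$ is computable directly from the description of $\bP$ in Section \ref{sec:commens}; in the baseline case $R=\Z$ one recovers the classical identity $\bE(f_N)=1/\#N$, and in general the answer should be controlled by the projective rank of $N$ relative to $V$ together with combinatorial data coming from the idempotent decomposition of the maximal order inside $A$. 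Once this ``moments determine the measure'' reduction is in place, it remains to evaluate, for each fixed $N$, the arithmetic average of $f_N(R\otimes_{\Z[G]}\Ar_F)$ as $(F,\iota)$ ranges over $\cF_{c\le B}$, and to check that this average converges to $\bE(f_N)$.

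The second step is the arithmetic heart. By the exact sequence (\ref{eq:DualArakelov}) and Pontryagin duality, giving a surjection from $R\otimes_{\Z[G]}\Ar_F$ onto $N$ is equivalent to giving a pair of compatible maps, one from $\cO_F^\times$ and one from $\Cl_F$, into target modules determined by $N$. The class group contribution can be translated, via class field theory, into the existence of a certain abelian unramified extension of $F$; the unit group contribution is constrained by local data at the archimedean places and should yield, after the hypothesis $A\otimes_{\Z[G]}\cO_F^\times\cong V$ is used, a uniform correction factor depending only on $V$. Combining these, $\sum_{(F,\iota)} f_N(R\otimes_{\Z[G]}\Ar_F)$ should reorganise as a count of towers $F'/F/K$ of Galois extensions with prescribed Galois group and prescribed local behaviour, which one may then attack by Dirichlet series methods, by the stacky counts of Matchett Wood~\cite{MelanieFair}, or, in the function field analogue, by the Ellenberg--Venkatesh--Westerland template. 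Counting by $c_{F/K}$ rather than by discriminant is crucial here: as Theorem \ref{thm:quartics} shows, the discriminant ordering introduces spurious systematic contributions from intermediate fields, which is precisely the pathology the chosen counting parameter is designed to eliminate.

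The main obstacle is twofold. Conceptually, Conjecture \ref{conj:ourconj} subsumes essentially all unproven Cohen--Lenstra--Martinet predictions, so a proof in full generality is at least as hard as the original heuristics; a realistic first target is therefore $G$ small abelian with $V$ close to the regulator representation, where Bhargava-style parameterisations of low-degree extensions should make the unit contribution tractable. Technically, the deepest point lies before any moment calculation, namely in the definition of ``reasonable'': the class of such $f$ must be broad enough that the values $\bE(f_N)$ pin down the probability measure $\bP$ on the infinite set $\cM_V$ (a genuine issue when the relevant moments grow), yet narrow enough that the sum $\sum_{(F,\iota)\in\cF_{c\le B}} f(R\otimes_{\Z[G]}\Ar_F)$ admits an unambiguous leading asymptotic. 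Threading this needle is exactly the task left to Section \ref{sec:conj}, and it is in that definition, rather than in the moment computations sketched above, that I expect the truly delicate work to lie.
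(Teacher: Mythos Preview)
The statement you are addressing is a \emph{conjecture}, and the paper does not prove it. There is therefore no ``paper's own proof'' to compare your proposal against: the authors formulate Conjecture \ref{conj:ourconj}, leave the notion of ``reasonable'' deliberately open, and in Section \ref{sec:conj} offer two candidate definitions (Supplements 1 and 2) together with heuristic evidence, but nowhere do they claim or attempt a proof.

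What you have written is not a proof but a research programme, and you acknowledge as much when you observe that the conjecture ``subsumes essentially all unproven Cohen--Lenstra--Martinet predictions, so a proof in full generality is at least as hard as the original heuristics''. As a strategic outline your moment-method sketch is sensible and in line with the successful approaches in the literature, but it contains no argument that could be assessed for correctness: every step (``should be controlled by'', ``should reorganise as'', ``one may then attack by'') is a hope rather than a deduction. In particular, the reduction of reasonable functions to moment functions depends entirely on which of the paper's two proposed definitions of ``reasonable'' one adopts, and for neither Supplement 1 nor Supplement 2 is it clear that such a reduction holds; the paper itself leaves open whether the class in Supplement 1 is even promising in the sense defined there.

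In short: there is no gap to name because there is no proof, only a plan. If your intent was to record a plausible line of attack, it is reasonable; if your intent was to supply a proof, you have not done so, and neither has the paper.
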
\noindent
For any $(F,\iota)\in \cF$, the assumption that $S$ only contain prime
numbers that are good for $A$ implies that the short exact
sequence of $R$-modules that one obtains by applying the functor
$R\otimes_{\Z[G]}\bullet$ to the exact sequence \eqref{eq:DualArakelov}
splits, so that the isomorphism class of the $R$-module $R\otimes_{\Z[G]}\Ar_F$
is determined by those of $R\otimes_{\Z[G]}\Cl_F$ and of $R\otimes_{\Z[G]}\cO_F^\times$.
Moreover, the additional assumption that $S$ be finite implies that the isomorphism
class of $R\otimes_{\Z[G]}\cO_F^\times$ is determined by the isomorphism
class of the $A$-module $A\otimes_{\Z[G]}\cO_F^\times$, and in particular
is constant as $(F,\iota)\in \cF$ varies. Thus, for all $(F,\iota)\in \cF$
the isomorphism class of $R\otimes_{\Z[G]}\Ar_F$
carries precisely the same information as the isomorphism class of $R\otimes_{\Z[G]}\Cl_F$.

In order to allow the reader to compare our conjecture with
the Cohen--Lenstra--Martinet heuristics, we will prove in Section \ref{sec:commens}
that the probability distribution $\bP_V$ on $\cM$ that $\bP$ induces on the set of
$\Z_{(S)}$-torsion submodules of $M\in \cM_V$ satisfies, for all $L_0$, $M_0\in \cM$,
$$
\frac{\bP_V(L_0)}{\bP_V(M_0)}=\frac{\#\Hom(P_V,M_0)\cdot\#\Aut(M_0)}{\#\Hom(P_V,L_0)\cdot\#\Aut(L_0)}.
$$
The distribution $\bP_V$ is, in fact, the probability distribution that is used
in the original Cohen--Lenstra--Martinet heuristics, a version of which we will
give in Section \ref{sec:CL}. In other words, the probability distribution that
we postulate on $\Ar_F$ recovers, and explains, the probability distribution
of Cohen--Lenstra--Martinet on $\Cl_F$. The main differences between our conjecture
and the original heuristic are: the hypothesis that $S$ be finite; and the
changed ordering on $\cF$.

In substance, there is only one piece of theoretical evidence for Conjecture
\ref{conj:ourconj} that we are aware of, which also appears to be the only piece
of evidence for the original conjecture. It is the work of Davenport--Heilbronn
\cite{DH}, as generalised by Datskovsky--Wright \cite{DW}, which implies that
Conjecture \ref{conj:ourconj} holds if $G$ has order $2$, and $f$ is
the function $f(M)=\#(M/3M)$. The theorem actually addresses the average of the
function $f$ when the quadratic extensions are enumerated by ideal norm of the
relative discriminant rather than of the product of ramified primes, but also
shows that the average is unchanged if one restricts to quadratic extensions with
prescribed local behaviour at the primes above $2$ -- see also \cite[Corollary 4]{BV16}
for the special case $K=\Q$. It is not hard to deduce from
this refined statement that Conjecture \ref{conj:ourconj} itself also holds for
this choice of $G$ and $f$.

As for computational evidence, in almost all large scale computational data for
Cohen--Lenstra--Martinet related questions assembled to date that we are
aware of fields are enumerated by the absolute value of their discriminant, as
in the original heuristics. It would be desirable to have a large body of
computational data, not only for Conjecture \ref{conj:ourconj}, but also for
variants with other orders of enumeration that do not suffer from the problem
pointed out in the present paper. 

We conclude the introduction by discussing the notion of a ``reasonable
function''. It already appears in the original
heuristics of Cohen--Lenstra \cite{CL} and Cohen--Martinet \cite{CM}, but has
never been made precise. In order to turn the heuristics into actual
conjectures, with a truth value, we will offer two possible
definitions of this notion in Section \ref{sec:conj}.

Of course, the minimal requirement that a function $f$ has to fulfil in order
to satisfy the conclusion of Conjecture \ref{conj:ourconj} is that the expected
value $\bE(f)$ should exist. It may be tempting to
conjecture that, at least for $\R_{\geq 0}$-valued functions $f$, this minimal
condition is in fact sufficient. However, the following result, which we will
prove as Theorem \ref{thm:Bjorn}, using a construction communicated to us
by Bjorn Poonen, shows that the conjecture is likely false in that generality.

\begin{theorem}\label{thm:introBjorn}
Let $X$ be a countably infinite set, and let $p$ be a discrete
probability measure on $X$. For all $x\in X$, abbreviate $p(\{x\})$ to $p(x)$
and assume that $p(x)>0$. Let $B$ be the subset of $X^{\Z_{\geq 1}}$
consisting of those sequences $(y_i)_{i\in \Z_{\geq 1}}\in X^{\Z_{\geq 1}}$
for which there exists a function $f\colon X\to \R_{\geq 0}$ such that
\begin{enumerate}[leftmargin=*, label={\upshape(\roman*)}]
\item the expected value $\sum_{x\in X}f(x)p(x)$ is finite, but
\item the limit $\lim_{n\to \infty}\frac1n\sum_{i=1}^nf(y_i)$ does not
exist in $\R$.
\end{enumerate}
Then the measure of $B$ with respect to the product measure induced by $p$
on $X^{\Z_{\geq 1}}$ is equal to $1$.
\end{theorem}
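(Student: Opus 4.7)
The plan is to derive the existence of $f$ from the Uniform Boundedness Principle. Write $L^1(X,p)$ for the Banach space of functions $\phi\colon X\to\R$ with $\|\phi\|=\sum_{x\in X}|\phi(x)|p(x)<\infty$, and for $N\geq 1$ define the Ces\`aro functional $T_N\colon L^1(X,p)\to\R$, $\phi\mapsto\frac1N\sum_{i=1}^N\phi(y_i)$. Write $\hat p_N(x)=\frac1N\#\{i\leq N:y_i=x\}$. Since $p(x)>0$ for every $x$, one may identify $L^1(X,p)^*$ with the space of bounded functions on $X$, and under this identification $T_N$ is represented by $x\mapsto \hat p_N(x)/p(x)$, so the operator norm of $T_N$ equals $\sup_{x\in X}\hat p_N(x)/p(x)$; this supremum is finite since $\hat p_N$ is supported on $\{y_1,\ldots,y_N\}$.

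The main step is to show that the operator norm of $T_N$ is unbounded in $N$ for $p^{\Z_{\geq 1}}$-almost every $y$. Since $X$ is countably infinite with $p(x)>0$, one can pick distinct $x_{n_1},x_{n_2},\ldots\in X$ with $p(x_{n_{k+1}})\leq p(x_{n_k})/2$; in particular $p(x_{n_k})\to 0$. Fix $K\geq 2$, partition $\Z_{\geq 1}$ into consecutive disjoint blocks $I_1,I_2,\ldots$ with $|I_k|=\lfloor 1/(Kp(x_{n_k}))\rfloor$, and let $R_k=\sum_{j\leq k}|I_j|$ be the right endpoint of $I_k$. The events $E_k=\{y:\exists\,i\in I_k,\,y_i=x_{n_k}\}$ are independent because the $I_k$ are disjoint, and $\bP(E_k)=1-(1-p(x_{n_k}))^{|I_k|}$ tends to $1-e^{-1/K}>0$ as $k\to\infty$, so by the second Borel--Cantelli lemma, almost surely infinitely many $E_k$ occur. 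The geometric decay yields $p(x_{n_j})\geq 2^{k-j}p(x_{n_k})$ for $j\leq k$, hence $R_k\leq 2/(Kp(x_{n_k}))$; so on any such $E_k$, for the witnessing index $i\in I_k$, the operator norm of $T_i$ is at least $\hat p_i(x_{n_k})/p(x_{n_k})\geq 1/(R_kp(x_{n_k}))\geq K/2$. As $K$ is arbitrary, the operator norm is almost surely unbounded.

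Given this, the Uniform Boundedness Principle applied to the continuous functionals $\{T_N\}$ on the Banach space $L^1(X,p)$ produces $\phi\in L^1(X,p)$ with $\sup_N|T_N(\phi)|=\infty$. Taking $f=|\phi|$ gives a function $X\to\R_{\geq 0}$ with $\sum_x f(x)p(x)=\|\phi\|<\infty$, verifying (i); and the bound $T_N(f)=\sum_x|\phi(x)|\hat p_N(x)\geq|T_N(\phi)|$ forces $T_N(f)$ to be unbounded in $N$, so the Ces\`aro averages admit no limit in $\R$, verifying (ii). The main technical obstacle is securing the independence needed for Borel--Cantelli: this is why the blocks $I_k$ are taken disjoint and of length tuned to $1/(Kp(x_{n_k}))$, and why the geometric decay $p(x_{n_{k+1}})\leq p(x_{n_k})/2$ is needed, since it is what ensures that $R_k$ stays of the same order as $1/(Kp(x_{n_k}))$, so that the size of the norm bound scales with $K$.
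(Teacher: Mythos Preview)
Your argument is correct, but it takes a genuinely different route from the paper. The paper first proves a lemma (Proposition~\ref{prop:manyx}) saying that for almost every $y$ and every $\epsilon>0$ there are infinitely many $x$ with $y_i=x$ for some $i\le\epsilon/p(x)$; the probabilistic step there handles events that are \emph{not} independent by observing that conditioning only lowers the relevant probabilities. From this, the paper extracts an explicit sequence $x_1,x_2,\ldots$ with first-occurrence index $i(n)\le n^{-3}/p(x_n)$ and sets $f(x_n)=n^{-2}/p(x_n)$, $f=0$ elsewhere, so $\bE(f)=\sum n^{-2}$ while $\frac{1}{i(n)}\sum_{j\le i(n)}f(y_j)\ge n$.

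You instead recast the problem functional-analytically: the empirical-average functionals $T_N$ live in $(L^1(X,p))^*$, and you show their norms $\sup_x\hat p_N(x)/p(x)$ are almost surely unbounded by a Borel--Cantelli argument with \emph{independent} events, engineered via disjoint blocks of carefully chosen lengths along a geometrically sparse subsequence of $X$. The Uniform Boundedness Principle then furnishes the witness $\phi$, and $f=|\phi|$ does the job. Your approach trades the explicit formula for $f$ for a cleaner, off-the-shelf probabilistic lemma (second Borel--Cantelli) and a one-line existence argument; the paper's approach is more elementary and constructive but has to work around the lack of independence. Both the paper's ``early appearance'' phenomenon and your ``unbounded operator norm'' statement are essentially the same fact, and your geometric thinning $p(x_{n_{k+1}})\le p(x_{n_k})/2$ plays the role that the paper's choice of $\epsilon=n^{-3}$ plays in controlling how early the appearances are. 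One cosmetic point: for small $k$ the block length $\lfloor 1/(Kp(x_{n_k}))\rfloor$ may vanish, but since $p(x_{n_k})\to 0$ this only discards finitely many (empty) events and does not affect the argument.
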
\noindent
Theorem \ref{thm:introBjorn} suggests that if the sequence
of dual Arakelov class groups really were a random sequence with probability
measure $\bP$, then with probability $1$ there would be a function $f$ 
for which $\bE(f)$ exists but the conclusion of Conjecture \ref{conj:ourconj}
is violated, and such functions $f$ should therefore not be considered ``reasonable''.

One of our two ways of narrowing down the class of reasonable functions is to
require, in Conjecture \ref{conj:suppl1}, that for all $j\in \Z_{\geq 1}$ the
$j$-th moment of $|f|$ be finite. In Question \ref{qn:moments} we raise the
probability-theoretic problem of showing that this additional requirement
suffices to eliminate the difficulty mentioned. In our second proposal, in
Conjecture \ref{conj:suppl2}, we take a completely different, computer science perspective.

\begin{acknowledgements}
We would like to thank: Bjorn Poonen for communicating to us the construction
that we use in the proof of Theorem \ref{thm:Bjorn} and for allowing us to
include it here; Cornelius Greither for helpful correspondence on Galois
module structure in abelian fields; Manjul Bhargava and Henri Johnston for
several helpful conversations; Henri Johnston, Melanie Matchett Wood,
and Bjorn Poonen for many helpful comments and suggestions on the first
draft of the manuscript; and an anonymous referee for useful remarks that
helped us improve the exposition.
Parts of this research were done at the MPIM in Bonn, and parts were done
while the first author was at Warwick University. We would like to thank these
institutions for their hospitality and financial support. The first author
gratefully acknowledges the financial support through EPSRC Fellowship
EP/P019188/1, `Cohen-Lenstra heuristics, Brauer relations, and low-dimensional manifolds'.
\end{acknowledgements}

\section{The Cohen--Lenstra--Martinet heuristics}\label{sec:CL}\noindent
In this section, we give a version of the heuristics of Cohen--Lenstra
and Cohen--Martinet. Our formulation differs from the original one in several
ways, as we will point out, but does not yet incorporate the corrections that
will be necessary in light of the results of Sections \ref{sec:imagab} and
\ref{sec:quartics}.

Let $G$ be a finite group, let $A$ be a quotient of the group ring $\Q[G]$ by
some two-sided ideal that contains $\sum_{g\in G}g$, let $S$ be a set of prime
numbers that are good for $A$, let $R$ be the image of $\Z_{(S)}[G]$ in $A$
under the quotient map, let $P$ be a finitely generated projective $R$-module,
and let $V$ denote the $A$-module $A\otimes_{R}P$. Let $\cM$ be as in the
introduction. If $\uB=(B_{A'})_{A'}$ is a sequence of real numbers indexed by
the simple quotients $A'$ of $A$, then let $\cM_{\leq\uB}$ be the set of all
$M\in \cM$ such that for every simple quotient $A'$ of $A$ we have
$\#(R'\otimes_R M) \leq B_{A'}$, where $R'$ denotes the image of $R$ in $A'$.
We will write $\uB\to \binfty$ to mean $\min_{A'}B_{A'}\to \infty$.

If $f$ is a function on $\cM$, and $M$ is a finite $R$-module, then we
will write $f(M)$ for the value of $f$ on the unique
element of $\cM$ that is isomorphic to $M$. For a finite module $M$,
define $w_P(M)=\frac{1}{\#\Hom(P,M)\cdot\#\Aut M}$.

Let $K$ be a number field, let $\Kbar$
be an algebraic closure of $K$, and let $\cF$ be the set of all pairs
$(F,\iota)$, where $F\subset \Kbar$ is a Galois extension of $K$ that contains
no primitive $p$-th root of unity for any prime $p\in S$, and
$\iota$ is an isomorphism between the Galois group of $F/K$ and $G$ that induces
an isomorphism $A\otimes_{\Z[G]} \cO_F^\times\cong V$ of $A$-modules. If
$(F,\iota)$ is in $\cF$, then the class group $\Cl_F$
is naturally a $G$-module via the isomorphism $\iota$.
Let $\Delta_{F/K}$ denote the ideal norm of the relative discriminant of $F/K$.
For a positive real number $B$, let
$\cF_{\Delta\leq B}=\{(F,\iota)\in \cF: \Delta_{F/K}\leq B\}$.

\begin{heuristic}[Cohen--Lenstra--Martinet heuristics]\label{he:CLM}
With the above notation, assume that $\cF$ is infinite, and let $f$ be a
``reasonable'' $\C$-valued function on $\cM$. Then:
\begin{enumerate}[leftmargin=*, label={\upshape(\alph*)}]
\item the limit
\begin{eqnarray}\label{eq:average}
\lim_{B\to \infty}\frac{\sum_{(F,\iota)\in \cF_{\Delta\leq B}}
f(R\otimes_{\Z[G]}\Cl_F)}{\#\cF_{\Delta\leq B}}
\end{eqnarray}
exists;
\item the limit
\begin{eqnarray}\label{eq:expectation}
\lim_{\uB\to \binfty} \frac{\sum_{M\in \cM_{\leq \uB}}w_P(M)f(M)}{\sum_{M\in \cM_{\leq \uB}}w_P(M)}
\end{eqnarray}
exists;
\item the two limits {\rm (\ref{eq:average})} and {\rm (\ref{eq:expectation})} are equal.
\end{enumerate}
\end{heuristic}\noindent
The notion of a ``reasonable'' function was left undefined in \cite{CL} and
\cite{CM}, and has never been made precise.



Let us briefly highlight some of the differences between Heuristic \ref{he:CLM}
and \cite[Hypoth\`ese 6.6]{CM}.

While Cohen--Martinet assume that the families of fields under consideration
are non-empty, we assume in Heuristic \ref{he:CLM} that $\cF$ is infinite. If
$\cF$ was finite, then Heuristic \ref{he:CLM} would almost certainly be false
for any reasonable interpretation of the word ``reasonable'', so we avoid
dependencies on some form of the inverse Galois problem.


The original heuristics did not include the condition that the fields $F$
should not contain a primitive $p$-th root of unity for any $p\in S$. However,
work by Malle \cite{Malle} suggests that if that condition was omitted, then
for those primes $p$ that do divide
the order of the group of roots of unity of $F$ for a positive proportion of all
$(F,\iota)\in \cF$, the probability measure governing the $p$-primary parts of
the class groups would likely need to be modified.

Considering pairs $(F,\iota)$ consisting of a number field and an isomorphism
between its Galois group and $G$, as in Heuristic \ref{he:CLM}, is one way of
making precise the original formulations of Cohen--Lenstra and Cohen--Martinet, who speak of the
family of extensions of $K$ ``with Galois group $G$''. In the above formulation,
some concrete conjectures of \cite{CM2} become trivially true.
An example of this is \cite[(2)(a)]{CM2}, where it is conjectured that if $\rC_3$
is a cyclic group of order $3$, then among Galois number fields with Galois
group isomorphic to $\rC_3$ and with class number $7$, each of the two isomorphism
classes of non-trivial $\rC_3$-modules of order $7$ appears with probability
$50\%$. We do not know how to make the notion of a family of fields ``with
Galois group $G$'' precise in a natural way that would render this and similar
examples well-defined, but not trivial.

In \cite[Hypoth\`ese 6.6]{CM}, the analogue of the expected value
(\ref{eq:expectation}), which is denoted by $M_u^S(f)$ there, is defined as a
sum over representatives of isomorphism classes of all finite $\cO$-modules,
where $\cO$ is a maximal $\Z$-order of $\Q[G]$ containing $\Z[G]$. This differs
in two ways from Heuristic \ref{he:CLM}.

Firstly, in that version the sums that are analogous to the numerator and
denominator of expression (\ref{eq:expectation}) contain summands corresponding
to modules $M$ whose order is divisible by primes not in $S$, even when $f(M)$
depends only on the isomorphism class of $\Z_{(S)}\otimes_{\Z}M$. It can be
easily deduced from \cite[Proposition 5.6]{CL} that this is a purely cosmetic
difference. In particular, one can show that if $S'$ is a subset of $S$, and
$f(M)$ only depends on the isomorphism class of $\Z_{(S')}\otimes_{\Z_{(S)}}M$,
then Heuristic \ref{he:CLM} holds if and only if it holds with $S$ replaced by $S'$.

Secondly, in \cite[Hypoth\`ese 6.6]{CM} the expected value $M_u^S(f)$
does not depend on the quotient $R$ of $\Z_{(S)}\otimes_{\Z}\cO \cong \Z_{(S)}[G]$,
and that is certainly not what was intended: if the set $S$ is finite and non-empty,
and the function $f$ satisfies $f(M)=0$ for all those $\Z_{(S)}[G]$-modules $M$
that are annihilated by the kernel of the map $\Z_{(S)}[G]\to R$, and $f(M)=1$
for all other $\Z_{(S)}[G]$-modules, then the limit in (\ref{eq:average}) is
zero, while $M_u^S(f)$ is non-zero.

\begin{remark}
There are some curious functions that may be considered reasonable, but for
which the limit (\ref{eq:expectation}) does not exist when $S$ is too large.
Suppose, for example,
that $S$ contains almost all prime numbers, and that $R$ is the product of two
rings $T$ and $T'$, where $T\cong T'\cong\Z_{(S)}$, so that $A$ is a product
of two $\Q$-algebras $C=\Q\otimes_{\Z_{(S)}}T$ and $C'=\Q\otimes_{\Z_{(S)}}T'$,
both isomorphic to $\Q$. Take $P=\{0\}$. Define
$f(M) = 1$ if $\#(T\otimes_R M) > \#(T'\otimes_R M)$, and $f(M)=0$ otherwise.
If $B_C$ is fixed and $B_{C'}$ tends to $\infty$, then the limit of
$$
\frac{\sum_{M\in \cM_{\leq \uB}}w_P(M)f(M)}{\sum_{M\in \cM_{\leq \uB}}w_P(M)}
$$
is $0$; while if $B_{C'}$ is fixed and $B_C$ tends to $\infty$, then that limit
is $1$. From this observation it can be deduced that the limit in
(\ref{eq:expectation}) does not exist. Such examples can be realised in the context of number
fields: let $\rC_2$ be a cyclic group of order $2$ and $\rS_3$ the symmetric group on
a set of $3$ elements, suppose that $F/\Q$ is Galois with Galois group isomorphic
to $\rC_2\times \rS_3$ such that the inertia groups at $\infty$ are subgroups of $\rS_3$ of order $2$,
and let $S$ contain almost all prime numbers. Then $F$ contains two imaginary quadratic
subfields, one that is contained in a subextension that is Galois over $\Q$
with Galois group isomorphic to $\rS_3$, and one that is not,
and the question of how often the order of the $S$-class group of the former
is greater than that of the latter cannot be answered by Heuristic \ref{he:CLM}.
We leave it to the reader to check that this example can indeed be realised in
the framework of Heuristic \ref{he:CLM} with a judicious choice of $A$ and $V$.

If instead $S$ is finite, then all the relevant sums converge absolutely, and the
limit (\ref{eq:expectation}) is well-defined. This adds to our reasons for
demanding in Conjecture \ref{conj:ourconj} that $S$ be finite.
\end{remark}

%
%


\section{Commensurability of automorphism groups}\label{sec:commens}\noindent
In this section, we recall the formalism of commensurability from \cite{us},
and deduce the existence of the probability measure $\bP$ as described in the
introduction.

A {\it group isogeny\/} is a group homomorphism
$f\colon H\to G$ with $\#\ker f<\infty$ and $(G:fH)<\infty$, and its
{\it index\/} $\ii(f)$ is defined to be $(G:fH)/\#\ker f$. For
a ring $R$, an $R${\it-module isogeny\/} is an $R$-module homomorphism that
is an isogeny as a map of additive groups. A {\it ring isogeny\/}
is a ring homomorphism that is an isogeny as a map of additive
groups. The index of an isogeny of one of the latter two types is defined as
the index of the induced group isogeny on the additive groups.

If $X$, $Y$ are objects of a category
$\cC$, then a {\it correspondence\/} from $X$ to $Y$ in $\cC$ is
a triple $c=(W,f,g)$, where $W$ is an object of $\cC$ and
$f\colon W\to X$ and $g\colon W\to Y$ are morphisms in~$\cC$; we
will often write $c\colon X\rightleftharpoons Y$ to indicate a
correspondence. A {\it group commensurability\/} is a
correspondence $c=(W,f,g)$ in the category of groups for which
both $f$ and $g$ are isogenies. For a ring $R$, we define
an {\it $R$-module commensurability} to be a correspondence of $R$-modules that
is a commensurability of additive groups, and a {\it ring commensurability}
is defined analogously. If $c=(W,f,g)$ is a commensurability of groups, or of
rings, or of modules over a ring, then its index is defined as
$\ii(c)=\ii(g)/\ii(f)$.

Let $R$ be a ring, and let $c=\!(N,f,g)\colon$
$L\rightleftharpoons M$ be a correspondence
of $R$-modules. We define the {\it endomorphism ring\/} $\End c$
of $c$ to be the subring $\{(\lambda,\nu,\mu)\in(\End
L)\times(\End N)\times(\End M):\lambda f=f\nu$, $\mu
g=g\nu\}$ of the product ring $(\End L)\times(\End
N)\times(\End M)$.  There are natural ring homomorphisms
$\End c\to\End L$ and $\End c\to\End M$ sending
$(\lambda,\nu,\mu)$ to $\lambda$ and $\mu$, respectively;
we shall write $\ee(c)\colon\End L\rightleftharpoons\End M$ for
the ring correspondence consisting of $\End c$ and those two ring
homomorphisms. Similarly, we define the
{\it automorphism group\/} $\Aut c$ of $c$ to be the group
$(\End c)^\times$, and we write $\aa(c)\colon\Aut
L\rightleftharpoons\Aut M$ for the group correspondence
consisting of $\Aut c$ and the natural maps $\Aut c\to\Aut L$,
$\Aut c\to\Aut M$.

The following result is a special case of \cite[Theorem 1.2]{us}.
\begin{theorem}\label{thm:fromus}
Let $G$ be a finite group, let $A$ be a quotient of $\Q[G]$ by some two-sided
ideal, let $S$ be a set of prime numbers, let $R$ be the image of
$\Z_{(S)}[G]$ in $A$, and let $L$, $M$ be finitely generated $R$-modules. Then:
\begin{enumerate}[leftmargin=*, label={\upshape(\alph*)}]
\item\label{item:existence} there is an $R$-module commensurability
$L\rightleftharpoons M$ if and only if the $A$-modules
$A\otimes_{R}L$ and $A\otimes_{R}M$ are isomorphic;
\item if $c\colon L\rightleftharpoons M$ is an $R$-module
commensurability, then $\ee(c)\colon\End L\rightleftharpoons\End M$
is a ring commensurability, and $\aa(c)\colon \Aut
L\rightleftharpoons\Aut M$ is a group commensurability;
\item if $c$, $c'\colon L\rightleftharpoons M$ are
$R$-module commensurabilities, then one has
$$\ii(\ee(c))=\ii(\ee(c')),\;\;\;\;\; \ii(\aa(c))=\ii(\aa(c')).$$
\end{enumerate}
\end{theorem}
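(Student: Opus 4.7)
The plan is to exploit that $R$ is naturally a $\Z_{(S)}$-order in the finite-dimensional $\Q$-algebra $A$, with $A=\Q\otimes_{\Z}R$, so that the functor $A\otimes_R-$ coincides with the exact localisation inverting every nonzero rational integer. In particular it annihilates precisely the $\Z$-torsion of any finitely generated $R$-module, and since such a module is also finitely generated over the principal ideal domain $\Z_{(S)}$, its torsion submodule is finite. All three parts of the theorem will follow from these order-theoretic properties; indeed, as the paper points out, the result is a special case of \cite[Theorem 1.2]{us}, and the task reduces to verifying those hypotheses.

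For part (a), the forward direction applies $A\otimes_R-$ to a commensurability $c=(N,f,g)$: the finite kernels and cokernels of $f,g$ vanish under this functor, so $A\otimes_R L\cong A\otimes_R N\cong A\otimes_R M$. For the converse, given an $A$-isomorphism $\phi\colon A\otimes_R L\to A\otimes_R M$, I would first reduce to torsion-free quotients: the modules $\overline L=L/L_{\rm tors}$ and $\phi^{-1}(\overline M)$, with $\overline M=M/M_{\rm tors}$, are two finitely generated $R$-submodules of $A\otimes_R L$ spanning it over $A$, hence commensurable, and their intersection is a common submodule of finite index in each, yielding a commensurability $\overline L\rightleftharpoons\overline M$. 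Successive fibre products against the quotient isogenies $L\to\overline L$ and $M\to\overline M$ then lift this back to a commensurability $L\rightleftharpoons M$.

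For part (b), I would verify directly that both projections $\End c\to\End L$ and $\End c\to\End M$ are additive-group isogenies. The kernel of the first is finite: a triple $(0,\nu,\mu)$ in it forces $\nu$ to factor through the finite module $\ker f$, and given $\nu$ the map $\mu$ is determined on the finite-index submodule $g(N)\subseteq M$ up to finitely many choices. The image has finite index: for $\lambda\in\End L$ the unique rational lift $\nu_A=f_A^{-1}\lambda_A f_A\in\End_A(A\otimes_R N)$ lies in the full $\Z_{(S)}$-lattice $\End N$ after multiplication by some positive integer, and a parallel lifting step produces the compatible $\mu\in\End M$. The statement for $\aa(c)$ then follows by restricting to unit groups, using the standard fact that commensurable $\Z_{(S)}$-orders in a finite-dimensional $\Q$-algebra have commensurable unit groups.

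The main obstacle is part (c), the independence of $\ii(\ee(c))$ and $\ii(\aa(c))$ from the choice of commensurability. The approach is to observe that any commensurability $c$ supplies canonical identifications $(\End L)\otimes\Q \cong (\End c)\otimes\Q \cong (\End M)\otimes\Q \cong \End_A(A\otimes_R L)=:B$, realising $\End L$ and $\End M$ as two $\Z_{(S)}$-orders inside the same finite-dimensional $\Q$-algebra $B$; the index $\ii(\ee(c))$ then acquires the interpretation of a ratio of covolumes of these two orders, which is manifestly intrinsic to the pair once the ambient algebra is fixed. A second commensurability $c'$ alters the identification only by an inner automorphism of $B$, induced by an $A$-linear automorphism of $A\otimes_R L$, and inner automorphisms of a finite-dimensional $\Q$-algebra preserve additive Haar measure and hence all covolume ratios. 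The corresponding statement for $\aa(c)$ follows by the same argument applied to unit groups.
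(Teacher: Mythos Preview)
The paper does not prove this theorem at all; it simply records it as a special case of \cite[Theorem~1.2]{us}. You note this yourself, and the observation that $R$ is a $\Z_{(S)}$-order in the semisimple $\Q$-algebra $A$ with $\Q\otimes_{\Z_{(S)}}R=A$ is precisely what is needed to invoke that reference, so at the level of what the paper actually does your proposal already suffices.

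Your further sketch, which goes well beyond the paper, is largely sound but deserves two remarks on part~(c). First, when $L$ or $M$ has $\Z$-torsion, $\End L$ is not literally an order in $B=\End_A(A\otimes_R L)$ but an extension of one by a finite group; the covolume interpretation still goes through with a little bookkeeping, but this reduction should be made explicit. Second, and more substantively, the sentence ``the corresponding statement for $\aa(c)$ follows by the same argument applied to unit groups'' elides a genuine distinction: your argument for $\ee(c)$ rests on the fact that inner automorphisms of $B$ are $\Q$-linear maps of determinant~$1$ and hence preserve additive covolumes, but that reasoning does not transfer verbatim to $B^\times$. What one needs instead is that conjugation preserves Haar measure on the unimodular group $(B\otimes_\Q\R)^\times$ (reductive, since $B$ is semisimple), so that the covolumes of the arithmetic subgroups $\Aut L$ and $\Aut M$ are unaffected by the inner twist. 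With that addition your outline is complete; the underlying strategy is correct and is in the spirit of what \cite{us} does.
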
\noindent
\begin{notation}\label{not:GASM}
For the rest of the section, let $G$, $A$, $S$, and $R$ be as in Theorem \ref{thm:fromus}.
If $L$ and $M$ are finitely generated $R$-modules such that there exists a commensurability
$L\rightleftharpoons M$, then we define $\ia(L,M)=\ii(\aa(c))$ for any commensurability
$c\colon L\rightleftharpoons M$.
\end{notation}

\begin{lemma}\label{lem:multiplicative}
Let $L$, $M$, and $N$ be finitely generated $R$-modules such that there are
commensurabilities $L\rightleftharpoons M$ and $M\rightleftharpoons N$. Then
there is a commensurability $L\rightleftharpoons N$, and we have
$\ia(L,M)\ia(M,N)=\ia(L,N)$.
\end{lemma}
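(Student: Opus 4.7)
The plan is to construct an explicit composite commensurability $c\colon L\rightleftharpoons N$ via the $R$-module fibre product $W=W_1\times_M W_2$, and to relate $\aa(c)$ to the analogous group fibre product over $\Aut M$. Existence is immediate from Theorem~\ref{thm:fromus}\,(a): the hypotheses yield $A\otimes_R L\cong A\otimes_R M\cong A\otimes_R N$, and the same theorem applied in the other direction then produces a commensurability $L\rightleftharpoons N$.

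For the index identity, fix representatives $c_i=(W_i,f_i,g_i)$, $i=1,2$, and form $W=W_1\times_M W_2=\{(w_1,w_2):g_1w_1=f_2w_2\}$ with projections $p_1,p_2$. The isomorphism theorems give $\ker p_1\cong\ker f_2$ (finite) and $p_1(W)=g_1^{-1}(f_2W_2)$ of finite index in $W_1$ (because $f_2W_2$ has finite index in $M$); symmetrically for $p_2$. Hence $f:=f_1\circ p_1$ and $g:=g_2\circ p_2$ are isogenies, so $c:=(W,f,g)$ is an $R$-module commensurability $L\rightleftharpoons N$. By Theorem~\ref{thm:fromus}\,(c), $\ii(\aa(c))$ does not depend on the choice of $c$, so I only need to verify $\ii(\aa(c))=\ii(\aa(c_1))\cdot\ii(\aa(c_2))$ for this particular composite.

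I would next introduce the group fibre product $H:=\Aut c_1\times_{\Aut M}\Aut c_2$, equipped with its obvious projections $\alpha\colon H\to\Aut L$ (through $\Aut c_1$) and $\beta\colon H\to\Aut N$ (through $\Aut c_2$). A kernel/cokernel computation directly parallel to the one above for $(W,f,g)$—in which a common factor involving the intersection of the images of $\Aut c_1$ and $\Aut c_2$ in $\Aut M$ cancels in the ratio $\ii(\beta)/\ii(\alpha)$—shows that $(H,\alpha,\beta)$ is a group commensurability $\Aut L\rightleftharpoons\Aut N$ of index $\ii(\aa(c_1))\cdot\ii(\aa(c_2))$. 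On the other side, I would define a natural group homomorphism $\Phi\colon H\to\Aut c$ by sending $((\lambda,\nu_1,\mu),(\mu,\nu_2,\rho))$ to $(\lambda,(\nu_1,\nu_2)|_W,\rho)$: the compatibilities in $\End c_i$ force $(\nu_1,\nu_2)$ to preserve $W\subseteq W_1\oplus W_2$, and the resulting triple satisfies the defining equations of $\End c$. Compatibility of $\Phi$ with the projections to $\Aut L$ and $\Aut N$ is manifest; since restriction of a group correspondence along a group isogeny preserves its index, if $\Phi$ itself is a group isogeny then $\ii(\aa(c))=\ii((H,\alpha,\beta))=\ii(\aa(c_1))\cdot\ii(\aa(c_2))$.

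The main obstacle is verifying that $\Phi$ is a group isogeny. Finiteness of $\ker\Phi$ I would establish by observing that an element of the kernel consists of automorphisms $\nu_i\in\Aut W_i$ acting trivially on the finite-index submodule $p_i(W)\subseteq W_i$, hence parametrised by the finite set $\Hom_R(W_i/p_i(W),(W_i)_{\tors})$. The harder step is finite-indexness of the image of $\Phi$ in $\Aut c$: an arbitrary $\nu\in\Aut W$ need not literally arise as $(\nu_1,\nu_2)|_W$ for $\nu_i\in\Aut W_i$. However, flatness of $A$ over $R$ (since $A=\Q\otimes_{\Z_{(S)}}R$) lets $A\otimes_R-$ commute with the fibre product defining $W$, and because each $f_i,g_i$ becomes an isomorphism after $A\otimes_R-$, every $\nu$ extends uniquely to a pair of $A$-module automorphisms of $A\otimes_R W_i$ compatible over $A\otimes_R M$. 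These $A$-automorphisms need not stabilise the $R$-orders $\End_R W_i$, but the discrepancy is controlled by finite-index commensurabilities among the relevant orders; carefully accounting for these—the most delicate step of the whole argument—then gives the required bound on the cokernel of $\Phi$ and completes the proof.
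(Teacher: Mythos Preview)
The paper's own proof is simply a citation to \cite[Theorem 7.3]{us}, so there is no in-paper argument to compare against. Your fibre-product construction is the natural one and is correct, but you have made the final step much harder than necessary.

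Everything through the claim that $(H,\alpha,\beta)$ is a group commensurability $\Aut L\rightleftharpoons\Aut N$ of index $\ii(\aa(c_1))\cdot\ii(\aa(c_2))$ is fine; the cancellation you allude to is just the tower identity $(\Aut M:I_1\cap I_2)=(\Aut M:I_j)\cdot(I_j:I_1\cap I_2)$ for $j=1,2$, where $I_j$ is the image of $\Aut c_j$ in $\Aut M$. Your map $\Phi\colon H\to\Aut c$ is well defined and satisfies $\alpha=\alpha'\circ\Phi$ and $\beta=\beta'\circ\Phi$, where $\alpha',\beta'$ are the structure maps of $\aa(c)$.

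What you are missing is that these relations already force $\Phi$ to be an isogeny; no order-theoretic analysis is needed. From $\alpha=\alpha'\Phi$ one gets $\ker\Phi\subseteq\ker\alpha$, which is finite. For the image, set $K=\ker\alpha'$, finite by Theorem~\ref{thm:fromus}(b). Then $\alpha'(\Phi(H))=\alpha(H)$ and $\alpha'(\Aut c)$ both have finite index in $\Aut L$, so $\bigl(\alpha'(\Aut c):\alpha'(\Phi(H))\bigr)<\infty$; hence $(\Aut c:\Phi(H)K)<\infty$, and $(\Phi(H)K:\Phi(H))\le\#K<\infty$. Thus $\Phi$ is an isogeny, and
\[
\ii(\aa(c))=\frac{\ii(\beta')}{\ii(\alpha')}=\frac{\ii(\beta'\Phi)}{\ii(\alpha'\Phi)}=\frac{\ii(\beta)}{\ii(\alpha)}=\ii(\aa(c_1))\cdot\ii(\aa(c_2)).
\]
Your paragraph about extending $\nu$ to $A$-automorphisms of $A\otimes_R W_i$ and controlling whether these stabilise $W_i$ can be deleted entirely.
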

\begin{proof}
See \cite[Theorem 7.3]{us}.
\end{proof}\noindent
\begin{proposition}\label{prop:plusproj}
Let $P$ be a finitely generated projective $R$-module, let $L_0$ and $M_0$ be
finite $R$-modules, let $L= P\oplus L_0$, and $M= P\oplus M_0$. Then there is an $R$-module
commensurability $L\rightleftharpoons M$, and we have
$$
\ia(L,M) = \frac{\#\Hom(P,M_0)\cdot \#\Aut M_0}{\#\Hom(P,L_0)\cdot \#\Aut L_0}.
$$
\end{proposition}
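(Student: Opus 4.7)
The plan is to exhibit an explicit commensurability $c\colon L\rightleftharpoons M$ and compute $\ii(\aa(c))$ directly; by Notation~\ref{not:GASM} (which is well-defined thanks to Theorem~\ref{thm:fromus}(c)), this equals $\ia(L,M)$. I take $c=(P,f,g)$, where $f\colon P\to L=P\oplus L_0$ and $g\colon P\to M=P\oplus M_0$ are the canonical inclusions into the first summand. Each is injective with finite cokernel ($L_0$, respectively $M_0$), hence an isogeny, so $c$ is an $R$-module commensurability, which settles the existence assertion of the proposition.

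The key structural observation is that $\Hom(L_0,P)=\Hom(M_0,P)=0$: since $A$ is a quotient of $\Q[G]$, the ring $R\subset A$ is $\Z$-torsion-free, hence so is $P$ as a direct summand of some $R^n$, whereas $L_0$ and $M_0$ are finite. With respect to the decomposition $L=P\oplus L_0$, every endomorphism of $L$ is therefore block lower-triangular, and
$$
\Aut L=\left\{\begin{pmatrix}\alpha&0\\\gamma&\delta\end{pmatrix}:\alpha\in\Aut P,\ \delta\in\Aut L_0,\ \gamma\in\Hom(P,L_0)\right\},
$$
with the analogous description of $\Aut M$. Furthermore, embedding $\Hom(P,L_0)\hookrightarrow\Hom(R^n,L_0)=L_0^n$ shows that $\Hom(P,L_0)$, and similarly $\Hom(P,M_0)$, is finite.

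For $(\lambda,\nu,\mu)\in\Aut c$ with $\lambda=\bigl(\begin{smallmatrix}\alpha&0\\\gamma&\delta\end{smallmatrix}\bigr)$, the relation $\lambda f=f\nu$ forces $\alpha=\nu$ and $\gamma=0$, and symmetrically for $\mu g=g\nu$. Hence $\Aut c\cong\Aut P\times\Aut L_0\times\Aut M_0$; the map $\aa(c)_L\colon\Aut c\to\Aut L$ sends $(\nu,\delta_L,\delta_M)$ to the block-diagonal automorphism $\bigl(\begin{smallmatrix}\nu&0\\0&\delta_L\end{smallmatrix}\bigr)$, with kernel $\{1\}\times\{1\}\times\Aut M_0$ of order $\#\Aut M_0$ and image of index $\#\Hom(P,L_0)$ in $\Aut L$; therefore $\ii(\aa(c)_L)=\#\Hom(P,L_0)/\#\Aut M_0$, and symmetrically $\ii(\aa(c)_M)=\#\Hom(P,M_0)/\#\Aut L_0$. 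Combining via $\ia(L,M)=\ii(\aa(c))=\ii(\aa(c)_M)/\ii(\aa(c)_L)$ yields the claimed formula. The only real insight is the vanishing of $\Hom(L_0,P)$ and $\Hom(M_0,P)$, which reduces everything to triangular form; the remainder is routine $2\times 2$ matrix bookkeeping and presents no serious obstacle.
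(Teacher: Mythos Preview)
Your argument is correct. Both proofs hinge on the same structural fact---that $\Aut(P\oplus L_0)$ is an extension of $\Aut P\times\Aut L_0$ by $\Hom(P,L_0)$---but they package it differently. The paper works with the commensurability $(L,\pi,\id)\colon P\rightleftharpoons L$ built from the \emph{projection} $\pi\colon L\to P$, computes $\ia(P,L)=\#\Hom(P,L_0)\cdot\#\Aut L_0$ (and likewise $\ia(P,M)$), and then invokes the multiplicativity Lemma~\ref{lem:multiplicative} to obtain $\ia(L,M)=\ia(P,M)/\ia(P,L)$. You instead take the single commensurability $(P,f,g)\colon L\rightleftharpoons M$ built from the two \emph{inclusions}, compute $\Aut c$ in one shot, and read off $\ii(\aa(c))$ directly. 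Your route is marginally more self-contained in that it bypasses Lemma~\ref{lem:multiplicative}; the paper's route has the small conceptual advantage of isolating the quantity $\ia(P,L)$, which reappears later (in the proof of Proposition~\ref{prop:bodyequivprob}) as the weight $w(L_0)^{-1}$. One cosmetic point: you say $\Hom(P,L_0)$ \emph{embeds} in $L_0^n$; it is in fact a direct summand, but either description suffices for finiteness.
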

\begin{proof}
By Theorem \ref{thm:fromus}\ref{item:existence}, there exist commensurabilities
$L\rightleftharpoons M$, $P\rightleftharpoons L$, and $P\rightleftharpoons M$.
We will first compute $\ia(P,L)$ and $\ia(P,M)$.
The split exact sequence
$$
0\rar L_{0}\rar L\stackrel{\pi}{\rar} P\rar 0,
$$
where $\pi$ is the natural projection map, induces a surjective map
$$
\Aut L\rar \Aut L_{0}\times\Aut P,
$$
whose kernel is easily seen to be canonically isomorphic to $\Hom(P, L_{0})$.
It follows that if
$c$ is the commensurability $(L,\pi,\id)\colon P\rightleftharpoons L$, then
the map $\Aut c\rar \Aut L$ is an isomorphism, while
the map $\Aut c \rar \Aut P$ is onto, with kernel of cardinality
$\#\Hom(P, L_{0})\cdot \#\Aut L_{0}$.
Hence $\ia(P,L)=\ii(\aa(c)) = \#\Hom(P, L_{0})\cdot \#\Aut L_{0}$,
and analogously for $\ia(P,M)$.
By Lemma \ref{lem:multiplicative}, we therefore have
that
$$
\ia(L,M)=\frac{\ia(P,M)}{\ia(P,L)}
= \frac{\#\Hom(P,M_0)\cdot \#\Aut M_0}{\#\Hom(P,L_0)\cdot \#\Aut L_0},
$$
as claimed.
\end{proof}\noindent
For the rest of the section, assume that $S$ is a finite set of prime numbers
that are good for $A$, and let $\cM$ and $\cP$ be sets of finitely generated
$R$-modules as in the introduction.
\begin{lemma}\label{lem:uniqueproj}
Let $V$ be a finitely generated $A$-module. Then there exists a unique
$P_V\in \cP$ such that $A\otimes_R P_V\cong_A V$. Moreover, if $M$ is a
finitely generated $R$-module such that $A\otimes_R M\cong_A V$, then there
exists a unique $M_0\in \cM$ such that $M\cong P_V\oplus M_0$.
\end{lemma}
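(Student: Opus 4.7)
The plan is as follows. First I would verify that $R$ is a maximal $\Z_{(S)}$-order in $A$. By hypothesis each $p\in S$ is good for $A$, meaning that the image of $\Z_{(p)}[G]$ in $A$ is a maximal $\Z_{(p)}$-order. Since $S$ is finite, localising $R$ at each $p\in S$ recovers this image, so by the local characterisation of maximality $R$ is itself a maximal $\Z_{(S)}$-order. In particular $R$ is hereditary, and $\Z_{(S)}$ is a semi-local principal ideal domain.

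For part~(1) I would invoke the classification of finitely generated projective modules over such orders. Decomposing $A=\prod_j A_j$ into simple factors gives a matching decomposition $R=\prod_j R_j$ with each $R_j$ a maximal $\Z_{(S)}$-order in $A_j\cong M_{n_j}(D_j)$ for some division algebra $D_j$. By Morita equivalence, finitely generated projective $R_j$-modules correspond to finitely generated projective modules over a maximal $\Z_{(S)}$-order $\Delta_j$ in $D_j$; since $\Delta_j$ is semi-local, its finitely generated projectives are free and classified by rank, as in Reiner's \emph{Maximal Orders}. Hence the functor $P\mapsto A\otimes_R P$ descends to a bijection between isomorphism classes of finitely generated projective $R$-modules and finitely generated $A$-modules, producing the unique $P_V\in\cP$.

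For part~(2) the plan is to split off the torsion. Let $M_t\subseteq M$ denote the $\Z_{(S)}$-torsion submodule. Since $M$ is finitely generated over the Noetherian ring $R$, $M_t$ is annihilated by a non-zero element of $\Z_{(S)}$ and is therefore finite. The quotient $M/M_t$ is torsion-free and finitely generated, hence projective because $R$ is hereditary, so the short exact sequence $0\to M_t\to M\to M/M_t\to 0$ splits. Tensoring with $A$ over $R$ kills $M_t$ and produces $A\otimes_R(M/M_t)\cong V$, so part~(1) identifies $M/M_t$ with $P_V$. Taking $M_0\in\cM$ to represent the isomorphism class of $M_t$ establishes existence, and uniqueness is immediate: in any decomposition $M\cong P_V\oplus M_0'$ with $M_0'$ finite, $M_0'$ coincides with the torsion submodule of $M$, hence is isomorphic to $M_t$ and so equals $M_0$ in $\cM$. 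I expect the main obstacle to lie in the first part, where the classification of projective $R$-modules by their extension of scalars to $A$ is used; it is precisely here that the finiteness of $S$ is essential, since otherwise the locally free class group of a maximal order in a division algebra need not vanish and uniqueness of $P_V$ could fail.
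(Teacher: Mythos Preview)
Your proposal is correct and follows essentially the same approach as the paper: both establish that $R$ is a maximal $\Z_{(S)}$-order, invoke the classification of lattices over maximal orders (the paper via Reiner's (11.2), (18.1), (2.44), (18.10), and \S18 Exercise~3; you via Morita reduction to semi-local maximal orders in division algebras), and then split off the torsion submodule to handle part~(2). Your version makes the role of the semi-local hypothesis on $\Z_{(S)}$ more explicit, which is exactly where the finiteness of $S$ enters, but the underlying argument is the same.
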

\begin{proof}
Let $T$ be any finitely generated subgroup of $V$ such that
$\Q T=V$. Then $P=RT$ is a finitely generated $R$-submodule of $V$ such that
$A\otimes_RP=V$. By \cite[Corollary (11.2)]{MaxOrders}, the ring $R$ is a
maximal $\Z_{(S)}$-order in $A$. It follows from \cite[Theorems (18.1) and
(2.44)]{MaxOrders} that $P$ is a projective $R$-module, and is isomorphic to
a unique element $P_V$ of $\cP$.

Let $M$ be a finitely generated $R$-module such that $A\otimes_R M\cong_A V$,
let $M_{\tors}$ be the $R$-submodule of $M$ consisting of $\Z_{(S)}$-torsion
elements, and let $\bar{M}=M/M_{\tors}$ be the $\Z_{(S)}$-torsion free quotient.
It follows from \cite[Theorem (18.10) and \S18 Exercise 3]{MaxOrders} that
$\bar{M}\cong_R P_V$. Since $P_V$ is projective, we have $M\cong_R P_V\oplus M_{\tors}$,
and $M_{\tors}$ is isomorphic to a unique element $M_0$ of $\cM$.
\end{proof}\noindent
%
%
Recall from the introduction that if $V$ is a finitely generated $A$-module,
and $P_V\in \cP$ is such that $A\otimes_R P_V\cong_A V$, then
we define $\cM_V=\{P_V\oplus M_0: M_0\in \cM\}$.
By Lemma \ref{lem:uniqueproj}, every finitely generated $R$-module $M$ satisfying
$A\otimes_R M\cong_A V$ is isomorphic to a unique element of $\cM_V$.
We now state and prove the main result of the section.
\begin{proposition}\label{prop:bodyequivprob}
Under the assumptions of Notation \ref{not:GASM}, suppose that $S$ is a finite
set of prime numbers that are good for $A$, let $V$ be a finitely generated $A$-module,
and let $P_V\in \cP$ be such that $A\otimes_R P_V\cong V$. Then:
\begin{enumerate}[leftmargin=*, label={\upshape(\alph*)}]
\item\label{item:P_V} there exists a unique discrete probability distribution
$\bP_V$ on $\cM$ such that for all $L_0$, $M_0\in \cM$ we have
$$
\frac{\bP_V(L_0)}{\bP_V(M_0)}=\frac{\#\Hom(P_V,M_0)\cdot\#\Aut(M_0)}{\#\Hom(P_V,L_0)\cdot\#\Aut(L_0)};
$$
\item\label{item:P} there exists a unique discrete probability distribution
$\bP$ on $\cM_{V}$ such that for any isomorphism $L\oplus E\cong M$ of
$R$-modules, where $L$ and $M$ are in $\cM_V$, and $E$ is finite, we have
$$
\bP(L)=(\Aut M:\Aut L)\cdot\bP(M),
$$
where the inclusion $\Aut L\subset \Aut M$ is the obvious one;
\item\label{item:equivdistr} for all $E \in \cM$, we have
$\bP(P_V\oplus E)=\bP_V(E)$.
\end{enumerate}
\end{proposition}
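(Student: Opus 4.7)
The strategy is to reduce all three parts to Proposition \ref{prop:plusproj} together with a single weighted-convergence statement on $\cM$.

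For part (a), uniqueness is formal: the ratio condition determines $\bP_V$ up to a global constant, which is then pinned down by the requirement $\sum_{M_0\in\cM}\bP_V(M_0)=1$. For existence I would set $w_V(M_0)=1/(\#\Hom(P_V,M_0)\cdot\#\Aut M_0)$ and $Z_V=\sum_{M_0\in\cM}w_V(M_0)$, then define $\bP_V(M_0)=w_V(M_0)/Z_V$. The crucial input is the finiteness of $Z_V$; this rests on $S$ being finite and on $R$ being a maximal $\Z_{(S)}$-order in $A$ (as used already in the proof of Lemma \ref{lem:uniqueproj}, via \cite[Corollary~(11.2)]{MaxOrders}), and follows from the standard Euler-product argument of Cohen--Lenstra--Martinet after decomposing $R$ into its local components.

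For part (b), the key observation is that if $L=P_V\oplus L_0$ and $M=P_V\oplus M_0$ are in $\cM_V$ with $L\oplus E\cong M$, then $M_0\cong L_0\oplus E$, and Proposition \ref{prop:plusproj} yields
\[
\ia(L,M)=\frac{\#\Hom(P_V,M_0)\cdot\#\Aut M_0}{\#\Hom(P_V,L_0)\cdot\#\Aut L_0}.
\]
I would then identify this commensurability index with the group-theoretic index $(\Aut M:\Aut L)$ coming from the obvious embedding $\varphi\mapsto\varphi\oplus\id_E$. Since $P_V$ is $\Z_{(S)}$-torsion-free (being projective over the $\Z_{(S)}$-order $R$) while $L_0$, $M_0$, $E$ are $\Z_{(S)}$-torsion, we have $\Hom(L_0,P_V)=\Hom(M_0,P_V)=0$, and hence the semidirect decomposition $\Aut L\cong(\Aut P_V\times\Aut L_0)\ltimes\Hom(P_V,L_0)$ and likewise for $\Aut M$; the $\Aut P_V$ factor is shared under the inclusion, so a direct coset count gives $(\Aut M:\Aut L)=(\#\Aut M_0/\#\Aut L_0)\cdot(\#\Hom(P_V,M_0)/\#\Hom(P_V,L_0))$, matching $\ia(L,M)$. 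With this identification, setting $\tilde w(P_V\oplus L_0)=w_V(L_0)$ the condition on $\bP$ becomes $\bP(L)/\bP(M)=\tilde w(L)/\tilde w(M)$, so by the bijection $\cM_V\leftrightarrow\cM$, $P_V\oplus L_0\leftrightarrow L_0$, existence and uniqueness of $\bP$ reduce verbatim to the corresponding statements in part (a), with the same normalising constant $Z_V$.

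Part (c) is then immediate from the definitions: $\bP(P_V\oplus E)=\tilde w(P_V\oplus E)/Z_V=w_V(E)/Z_V=\bP_V(E)$.

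The main obstacle will be the careful identification in part (b) of the formal symbol $(\Aut M:\Aut L)$, between two infinite groups, with the commensurability index delivered by Proposition \ref{prop:plusproj}; this requires writing out the structure of both automorphism groups and checking that the obvious inclusion really does have finite index equal to $\ia(L,M)$. Once that is settled, both (a) and (b) are genuinely the same weighted counting problem, and (c) is a tautology.
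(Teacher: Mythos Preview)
Your proposal is correct and follows essentially the same route as the paper: both normalise the weight $w(L_0)=1/(\#\Hom(P_V,L_0)\cdot\#\Aut L_0)$, appealing to the Cohen--Martinet convergence result (the paper cites \cite[Th\'eor\`eme~3.6]{CM}), and both identify $(\Aut M:\Aut L)$ with the quantity delivered by Proposition~\ref{prop:plusproj}. The only organisational difference is that the paper defines $\bP$ directly via the commensurability index $\ia(L,M)$ and then recognises $(\Aut M:\Aut L)$ as $\ii\aa(c)$ for the specific commensurability $c=(L,\id,\iota)$, whereas you compute the group index by hand from the semidirect-product structure of $\Aut(P_V\oplus L_0)$ and then reduce to part~(a); these are the same calculation in different packaging.
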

\begin{proof}
For $L_0\in \cM$, write
$$
w(L_0)=\frac{1}{\#\Hom(P_V,L_0)\cdot\#\Aut(L_0)}.
$$
By \cite[Th\'eor\`eme 3.6]{CM}, the sum
$
\sum_{L_0\in \cM}w(L_0)
$
converges, to $\alpha$, say, so that we may define the probability
distribution $\bP_V$ on $\cM$ by $\bP_V(L_0)=w(L_0)/\alpha$ for
$L_0\in \cM$. It satisfies the conclusion of part \ref{item:P_V},
and is clearly the unique such distribution. This proves part \ref{item:P_V}.

We now prove part \ref{item:P}.
By combining the convergence of $\sum_{L_0\in \cM}w(L_0)$ with 
Proposition \ref{prop:plusproj}, we see that for all $M\in \cM_V$, the sum
$\sum_{L\in \cM_V}\ia(L,M)$ also converges, to $\beta_M$, say, so that we may
define a probability distribution $\bP$ on
$\cM_V$ by $\bP(L)=\ia(L,M)/\beta_M$ for $L\in \cM_V$. Note that it
follows from Lemma \ref{lem:multiplicative} that the definition of $\bP$ is
independent of the choice of module $M\in \cM_V$.
If there is an isomorphism $L\oplus E\cong M$ of
$R$-modules, where $L$ and $M$ are in $\cM_V$ and $E$ is finite, and
$\iota$ is the inclusion $L\hookrightarrow L\oplus E\cong M$, then
by definition, we have $\ia(L,M)=\ii\aa(c)$,
where $c$ is the commensurability $c=(L,\id,\iota)\colon L\rightleftharpoons M$.
Thence it immediately follows that $\bP$ satisfies the conclusion of part \ref{item:P},
and it is clearly the unique such probability distribution.

Part \ref{item:equivdistr}, finally, follows from Proposition \ref{prop:plusproj}.
\end{proof}\noindent
%


\section{Class groups of imaginary abelian fields}\label{sec:imagab}\noindent
In the present section, we prove Theorem \ref{thm:introimagabelian},
and use it to give a disproof of Heuristic \ref{he:CLM}. We begin by
establishing some notation for the section and recalling some well-known facts
that will also be useful in the next section.
\subsection*{Generalities on group rings}
Let $\bar{\Q}$ be an algebraic closure of $\Q$. Let $G$ be a finite abelian group,
with dual $\hat{G}=\Hom(G,\bar{\Q}^\times)$. For $\chi$, $\chi'\in \hat{G}$,
we write $\chi\sim\chi'$ if $\ker\chi = \ker\chi'$, or equivalently if there
exists $\sigma \in \Gal(\bar{\Q}/\Q)$ with $\chi=\sigma\circ\chi'$. Each
$\chi\in \hat{G}$ extends to a ring homomorphism $\chi\colon \Q[G]\to \bar{\Q}$,
of which the image is the cyclotomic field $\Q(\chi(G))$, and the natural map
$\Q[G]\to \prod_{\chi\in \hat{G}/\sim}\Q(\chi(G))$ is an isomorphism of $\Q$-algebras.

Let $S$ be a set of prime numbers not dividing $\#G$, and write
$\ZSG=\Z_{(S)}[G]$, which is a maximal $\Z_{(S)}$-order in $\Q[G]$. For $\chi\in \hat{G}$,
the image $\chi(\ZSG)$ of $\ZSG$ in $\Q(\chi(G))$ is the ring $\Z_{(S)}[\chi(G)]$,
which is a Dedekind domain. We have a ring isomorphism
$\ZSG\cong \prod_{\chi\in \hat{G}/\sim} \chi(\ZSG)$, so each $\ZSG$-module $M$ decomposes as
a direct sum $\bigoplus_{\chi\in \hat{G}/\sim}(\chi(\ZSG)\otimes_\ZSG M)$,
which leads to a group isomorphism $\G(\ZSG)\cong \bigoplus_{\chi\in \hat{G}/\sim}\G(\chi(\ZSG))$,
where we recall from the introduction that $\G$ denotes the Grothendieck group.

Since for each $\chi\in \hat{G}$ the ring $\chi(\ZSG)=\Z_{(S)}[\chi(G)]$ is a
Dedekind domain, by \cite[Theorems 1.4.12 and 3.1.13]{Kthry} there is a
canonical isomorphism $\G(\chi(\ZSG))\cong \Cl_{\chi(\ZSG)}\oplus$ $\Z$, where
$\Cl_{\chi(\ZSG)}$ is the class group of the $\Z_{(S)}$-order $\chi(\ZSG)$, and
in particular is finite.
Explicitly, the projection map
$\G(\chi(\ZSG))\to \Z$ is defined by sending the class of a finitely
generated $\chi(\ZSG)$-module $M$ to $\dim_{\Q(\chi(G))}(\Q(\chi(G))\otimes_{\chi(\ZSG)}M)$,
and a canonical splitting $\Z\to \G(\chi(\ZSG))$ is given by $1\mapsto [\chi(\ZSG)]$;
moreover,
if the $\chi(\ZSG)$-module $M$ is a non-zero ideal of $\chi(\ZSG)$, then
the element $[M]$ of $\G(\chi(\ZSG))$ projects to the ideal class of $M$ under
the projection map to $\Cl_{\chi(\ZSG)}$.
In particular, if $M$ is a finite $\ZSG$-module, then the class $[M]$ in $\G(\ZSG)$
is contained in the torsion subgroup $\G(\ZSG)_{\tors}\cong \bigoplus_{\chi\in \hat{G}/\sim}\Cl_{\chi(\ZSG)}$.
%
\subsection*{The Cohen--Lenstra--Martinet prediction}
In the present subsection we show that Heuristic \ref{he:CLM}
implies that if $c\in G$ is an element of order $2$, and
we choose $A=\Q[G]/(1+c)$, $R=\Z_{(S)}[G]/(1+c)$, and $P$ to be
the zero $A$-module in the heuristic,
then the class of
$R\otimes_{\Z[G]}\Cl_F$ is equidistributed in $\G(R)_{\tors}$
as $(F,\iota)$ varies over $\cF$.
The main technical ingredient is Theorem \ref{thm:analytic}.
It is a generalisation of \cite[Corollary 3.7]{CL} from the trivial
character to arbitrary Dirichlet characters.

Let $\cM(\ZSG)$ be a set of finite
${\ZSG}$-modules such that for every finite ${\ZSG}$-module $M$ there is
a unique $M'\in\cM(\ZSG)$ satisfying $M\cong M'$.
For $M\in \cM(\ZSG)$, $\us=(s_\chi)_{\chi\in \ccR}\in \C^{\ccR}$,
and $\uu=(u_\chi)_{\chi\in \ccR}\in (\Z_{\geq 0})^{\ccR}$, we recall the following
definitions from \cite{CL} and \cite{CM}:
\begin{eqnarray*}
|M|^{\us}&=&\prod_{\chi\in \ccR}\#(\chi(\ZSG)\otimes_{\ZSG} M)^{s_{\chi}};\\
\rs_{\uu}(M)&=&\#\{\text{$\ZSG$-linear surjections }Q\to M\},
\end{eqnarray*}
where $Q$ is a projective ${\ZSG}$-module such that
$\dim_{\Q(\chi(G))}(\Q(\chi(G))\otimes_{\ZSG} Q)=u_\chi$ for all $\chi\in \hat{G}$;
it is easy to see that $\rs_{\uu}(M)$ is well-defined, i.e. does not depend on
the choice of $Q$;
\begin{eqnarray*}
w_{\uu}(M)&=&\frac{\rs_{\uu}(M)}{|M|^{\uu}}\cdot \frac{1}{\#\Aut M};\\
w_{\binfty}(M) &=& \lim_{\uu\to \binfty}w_{\uu}(M),
\end{eqnarray*}
where we recall that the notation $\uu\to \binfty$ was defined at the beginning
of Section \ref{sec:CL}. Observe that for $\uu\in (\Z_{\geq 0})^{\ccR}$,
one has $|M|^{\uu}=\#\Hom(Q,M)$, where $Q$ is as in the definition of $\rs_{\uu}$. Moreover, if
for all $\chi\in \hat{G}$ the ranks $u_\chi$ are ``large'', then ``most'' homomorphisms
$Q\to M$, in a precisely quantifiable sense, are surjective. Making this precise,
one deduces that $w_{\binfty}(M)=1/\#\Aut(M)$. If $\uu$ and $\us$ are as above, and
$f\colon \cM(\ZSG)\to \C$ is any function, we define the following quantities when the
respective limit exists:
\begin{eqnarray*}
\Zz_{\uu}(f,\us)&=&\lim_{\uB\to \binfty}\sum_{M\in \cM({\ZSG})_{\leq \uB}}w_{\uu}(M)|M|^{-\us}f(M);\\
\Zz(f,\us)&=&\Zz_{\binfty}(f,\us)=\lim_{\uu\to\binfty}\Zz_{\uu}(f,\us);\\
\Zz_{\uu}(\us)&=& \Zz_{\uu}(\triv,\us),\;\;\;\Zz(\us)= \Zz(\triv,\us),
\end{eqnarray*}
where $\triv$ denotes the function $M\mapsto 1$ for all $M\in \cM({\ZSG})$, and
$\cM({\ZSG})_{\leq \uB}$ is defined analogously to $\cM_{\leq \uB}$ from Section \ref{sec:CL}.
If $\ZSG'$ is a quotient of $\ZSG$, we also define $\Zz_{\uu}^{\ZSG'}(f,\us)$
analogously to $\Zz_{\uu}(f,\us)$, and $\Zz^{\ZSG'}(f,\us)$ analogously to
$\Zz(f,\us)$, but with the sums running only over $\ZSG$-modules that
factor through $\ZSG'$, i.e. that are annihilated by the kernel of the quotient
map $\ZSG\to \ZSG'$, and we again set $\Zz^{\ZSG'}(\us) = \Zz^{\ZSG'}(\triv,\us)$.
With these definitions, the limit in (\ref{eq:expectation})
can be rewritten as
\begin{eqnarray}\label{eq:ZfZ}
\lim_{\us\to \us_P}\frac{\Zz^R(f,\us)}{\Zz^R(\us)},
\end{eqnarray}
where $\us_P=(\dim_{\Q(\chi(G))}(\Q(\chi(G))\otimes_{\ZSG} P))_{\chi\in \ccR}$,
provided that the limit $\lim_{\us\to \us_P}\Zz^R(f,\us)$ exists and is finite and
that $\Zz^R(\us_P)\neq 0$. We stress that this is true even if the infinite sum
$\Zz^R(\us_P)$ diverges, in which case both the limit in (\ref{eq:expectation})
and that in (\ref{eq:ZfZ}) are equal to $0$.

If ${\ZSG'}=\chi(T)$ for some $\chi\in \cR$, then
$\Zz_{\uu}^{\ZSG'}(f,\us)$ as a function of $\us$ depends only on the entry
$s_{\chi}$ of $\us$, and similarly for $\uu$, so we will write $\Zz_{u_\chi}^{\chi(\ZSG)}(f,s_\chi)$
in this case. In particular, if $f$ is multiplicative in direct sums of the form
$M=\bigoplus_{\chi\in \ccR}M_\chi$, where for each $\chi\in \cR$, the summand
$M_\chi$ is a $\chi(T)$-module, then one has
$$
\Zz_{\uu}(f,\us)= \prod_{\chi\in \ccR}\Zz_{u_{\chi}}^{\chi(\ZSG)}(f,s_\chi).
$$

Recall from the discussion at the beginning of the section that for each
$\chi\in \hat{G}$ we have a canonical isomorphism
$\G(\chi(\ZSG))\cong \Cl_{\chi(\ZSG)}\oplus$ $\Z$,
and that if $M$ is a finite $\chi(\ZSG)$-module, then $[M]\in \G(\chi(\ZSG))$ is
contained in the torsion subgroup $\Cl_{\chi(\ZSG)}$ of $\G(\chi(\ZSG))$. 
\begin{theorem}\label{thm:analytic}
Let $\hat{G}'$ be a subset of $\hat{G}$, let $\ZSG'=\prod_{\chi\in \ccRd}\chi(\ZSG)$,
let $\phi=(\phi_\chi)_{\chi\in \ccRd}\colon \bigoplus_{\chi\in \ccRd}\Cl_{\chi(\ZSG)}
\to \C^{\times}$ be a group homomorphism,
and define $f\colon \cM({\ZSG})\to \C$ by $f(M)=0$ if $M$ does not factor through $\ZSG'$,
and $f(M)=\phi([M])$ otherwise. For $\chi\in \cRd$, define $\tau_\chi=0$ if
$\phi_\chi$ is trivial, and $\tau_\chi=-1/[\Q(\chi(G)):\Q]$ otherwise.
Assume that $S$ contains all but finitely many prime numbers.
Then for all $\us\in \C^{\ccR}$ satisfying $\Re s_\chi > \tau_\chi$ for all $\chi\in \cRd$,
we have
$$
\Zz(f,\us)=\Zz^{\ZSG'}(f,\us) =
\prod_{\chi\in \ccRd}\prod_{k=1}^\infty L_{\Q(\chi(G))}^{(S)}(\phi_\chi^{-1},s_{\chi}+k),
$$
where $L_{\Q(\chi(G))}^{(S)}(\phi_\chi^{-1},s)$ denotes the Dirichlet
$L$-function corresponding to the Dirichlet character $\phi_\chi^{-1}$ of the field
$\Q(\chi(G))$ with the Euler factors at prime
ideals not dividing any element of $S$ omitted.
\end{theorem}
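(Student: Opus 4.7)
My plan is to follow the strategy of \cite[Corollary 3.7]{CL}, generalizing from the trivial character to an arbitrary $\phi$; the only genuinely new ingredient is how the character twist enters the local Euler factors.

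First, the identity $\Zz(f,\us)=\Zz^{\ZSG'}(f,\us)$ is immediate from the definition of $f$, which vanishes on every $\ZSG$-module that does not factor through $\ZSG'$. The ring decomposition $\ZSG'\cong\prod_{\chi\in\ccRd}\chi(\ZSG)$ canonically splits every finite $\ZSG'$-module as $M=\bigoplus_{\chi\in\ccRd}M_\chi$ with $M_\chi$ a $\chi(\ZSG)$-module, and $f$, $w_\uu$, and $|M|^{\us}$ are all multiplicative on such sums. (Multiplicativity of $f$ uses that the class $[M]\in\bigoplus_\chi\Cl_{\chi(\ZSG)}$ is the sum of the $[M_\chi]$ and that $\phi$ is a group homomorphism.) This factors
\[
\Zz^{\ZSG'}(f,\us)\;=\;\prod_{\chi\in\ccRd}\Zz^{\chi(\ZSG)}(f_\chi,s_\chi), \qquad f_\chi(N)=\phi_\chi([N]),
\]
reducing the problem to a single Dedekind domain $D=\chi(\ZSG)$.

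For the local calculation, I would use that every finite $D$-module is a direct sum of its $\fm$-primary parts, and that each $\fm$-primary module is of the form $M_\lambda=\bigoplus_i D/\fm^{\lambda_i}$ for a unique partition $\lambda$. From $0\to\fm^k\to D\to D/\fm^k\to 0$ and the canonical splitting $\G(D)\cong\Cl_D\oplus\Z$ one deduces $[M_\lambda]=-|\lambda|\cdot[\fm]$ in $\Cl_D$, so $\phi_\chi([M_\lambda])=\phi_\chi([\fm])^{-|\lambda|}$. Combining multiplicativity across distinct maximal ideals with the limit $\uu\to\binfty$, in which $w_\uu(M)\to 1/\#\Aut M$, one arrives at the Euler product
\[
\Zz^D(f_\chi,s_\chi)\;=\;\prod_\fm\sum_\lambda\frac{\phi_\chi([\fm])^{-|\lambda|}\,(N\fm)^{-s_\chi|\lambda|}}{\#\Aut M_\lambda}.
\]
Applying the classical partition identity
\[
\sum_\lambda\frac{y^{|\lambda|}}{\#\Aut M_\lambda}\;=\;\prod_{k=1}^\infty\frac{1}{1-yq^{-k}}, \qquad q=N\fm,
\]
with $y=\phi_\chi([\fm])^{-1}(N\fm)^{-s_\chi}$, and interchanging the products over $\fm$ and over $k$, yields $\prod_{k\geq 1}L^{(S)}_{\Q(\chi(G))}(\phi_\chi^{-1},s_\chi+k)$, since the maximal ideals of $D=\Z_{(S)}[\chi(G)]$ correspond bijectively to the prime ideals of $\cO_{\Q(\chi(G))}$ dividing some element of $S$.

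The main technical obstacle is controlling convergence throughout the region $\Re s_\chi>\tau_\chi$. All manipulations above are routinely justified in the smaller region of absolute convergence, namely $\Re s_\chi>0$ for every $\chi$: there the pointwise bound $w_\uu(M)\leq 1/\#\Aut M$ together with $|\phi_\chi|=1$ lets one apply dominated convergence to exchange the $\uu\to\binfty$ and $\uB\to\binfty$ limits, the product over $\fm$, and the sum over $\lambda$, reducing everything to the trivial-character case of \cite{CL,CM}. To extend the identity to $\Re s_\chi>\tau_\chi$, I would use that, when $\phi_\chi$ is non-trivial, the series $L^{(S)}_{\Q(\chi(G))}(\phi_\chi^{-1},s)$ converges conditionally for $\Re s>1-1/[\Q(\chi(G)):\Q]$ (and the $L$-function itself is entire), so that the infinite product $\prod_{k\geq 1}L^{(S)}(\phi_\chi^{-1},s_\chi+k)$ defines an analytic function in $\Re s_\chi>\tau_\chi$; the cofiniteness of $S$ keeps $L^{(S)}$ within finitely many Euler factors of the complete $L$-function, so the standard analytic bounds carry over. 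The equality on the larger half-plane then follows by analytic continuation from $\Re s_\chi>0$, once one verifies, by the same conditional-convergence estimates applied directly to $\Zz(f,\us)$, that the latter is likewise analytic in $\Re s_\chi>\tau_\chi$.
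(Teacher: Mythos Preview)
Your route differs from the paper's: you pass to the limit $\uu\to\binfty$ at the outset, replace $w_{\uu}(M)$ by $1/\#\Aut M$, and then evaluate $\sum_M f(M)|M|^{-\us}/\#\Aut M$ as an Euler product via the partition identity. The paper instead stays at \emph{finite} $\uu$, using the identities $\rs_{\uu+\uv}(M)=\sum_{N\subset M}\rs_{\uu}(N)\rs_{\uv}(M/N)|N|^{\uv}$ and $\sum_M w_{\uu}(M)\cdot\#\{N\subset M:N\cong M_1,\,M/N\cong M_2\}=w_{\uu}(M_1)w_{\uu}(M_2)$ together with the multiplicativity of $f$ on short exact sequences to obtain the recursion $\Zz_{\uu+\uv}(f,\us)=\Zz_{\uu}(f,\us)\,\Zz_{\uv}(f,\uu+\us)$ of formal Dirichlet series; this yields $\Zz_{\uu}(f,\us)=\prod_{\chi}\prod_{k=1}^{u_\chi}L^{(S)}_{\Q(\chi(G))}(\phi_\chi^{-1},s_\chi+k)$ \emph{before} any limit is taken. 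Only then does one let $\uu\to\binfty$.

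Your approach is fine on $\Re s_\chi>0$, but the last paragraph contains a real gap. The quantity $\Zz(f,\us)$ is by definition the iterated limit $\lim_{\uu}\lim_{\uB}$, and the theorem asserts that this limit \emph{exists} throughout $\Re s_\chi>\tau_\chi$ and equals the product there. Analytic continuation of the right-hand side does not, by itself, show that the defining limit on the left converges in the strip $\tau_\chi<\Re s_\chi\le 0$; your phrase ``by the same conditional-convergence estimates applied directly to $\Zz(f,\us)$'' does not name any mechanism by which cancellation among the $f(M)=\phi([M])$, which live on isomorphism classes of modules rather than ideals, would force the module sum to converge. In the paper's argument this step is precisely where the finite-$\uu$ identification pays off: since $\Zz_{\uu}(f,\us)$ is a \emph{finite} product of Dirichlet $L$-series, each of which converges (conditionally for $k=1$, absolutely for $k\ge 2$) on $\Re s_\chi>\tau_\chi$, Landau's theorem on products of Dirichlet series (Hardy--Riesz, Theorem~54) gives convergence of the Dirichlet series for $\Zz_{\uu}(f,\us)$ itself on that half-plane; taking $\uu\to\binfty$ is then just an infinite product of $L$-values. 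Without passing through finite $\uu$ (or otherwise invoking a product-of-Dirichlet-series convergence result), your argument does not establish the theorem on the full region claimed.
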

\begin{proof}
The proof proceeds very similarly to those of \cite[Corollary 3.7]{CL} and
\cite[Corollaire 3.9(ii)]{CM}, so we only sketch the main steps.

One has, for all $M$, $M_1$, $M_2\in \cM({\ZSG})$ and 
$\uu$, $\uv\in (\Z_{\geq 0})^{\ccR}$, the identities
$$
\rs_{\uu+\uv}(M)=\sum_{N\subset M}\rs_{\uu}(N)\rs_{\uv}(M/N)(\#N)^{\uv}
$$
with the sum running over submodules $N$ of $M$, and
\begin{eqnarray*}
\sum_{M\in \cM({\ZSG})}\!(w_{\uu}(M)\cdot\#\{N\subset M:N\cong M_1,M/N\cong M_2\})=
w_{\uu}(M_1)w_{\uu}(M_2);
\end{eqnarray*}
see \cite[Proposition 3.2 and Theorem 3.5]{CL}. Using the fact that $f$ is
multiplicative in short exact sequences of modules, one deduces from these two
identities and a short calculation that there is a formal identity of Dirichlet
series $\Zz_{\uu+\uv}(f,\us)=\Zz_{\uu}(f,\us)\cdot\Zz_{\uv}(f,\uu+\us)$. In
particular, if $\triv_\chi\in (\Z_{\geq 0})^{\ccR}$ denotes the element that
has $\chi$-entry $1$ and all other entries equal to $0$, then
$$
\Zz_{\uu+\triv_\chi}(f,\us)=\Zz_{\uu}(f,\us)\cdot\Zz_{1}^{\chi(T)}(f,u_\chi+s_\chi).
$$
A direct calculation shows that for each $\chi\in \hat{G}$, one has
$\Zz_{1}^{\chi(\ZSG)}(f,s)=L_{\Q(\chi(G))}^{(S)}(\phi_{\chi}^{-1},s+1)$.
It follows that for all $\uu\in (\Z_{\geq 0})^{\ccR}$,
one has a formal identity between Dirichlet series
\begin{eqnarray}\label{eq:prodL}
\Zz_{\uu}(f,\us) = \prod_{\chi \in \ccRd}\prod_{k=1}^{u_\chi}L_{\Q(\chi(G))}^{(S)}(\phi_{\chi}^{-1},s_\chi+k).
\end{eqnarray}
By \cite[Ch. VIII, Theorems 5 and 7]{Lang}, the Dirichlet series for
$L_{\Q(\chi(G))}^{(S)}(\phi_{\chi}^{-1},s+1)$
converges for $\Re s> \tau_\chi$, and for all $k\in\Z_{\geq 2}$, the Dirichlet series
for $L_{\Q(\chi(G))}^{(S)}(\phi_{\chi}^{-1},s+k)$ converges absolutely for $\Re s> \tau_\chi$.
It follows from this and from a classical result of Landau on
convergence of products of Dirichlet series (see \cite[Theorem 54]{HardyRiesz})
that equation (\ref{eq:prodL}) is an equality of analytic functions, valid whenever
$\Re s_\chi > \tau_\chi$ for all $\chi\in \cRd$.
Finally, since, for every $s\in \C$ with $\Re s> \tau_\chi$, one has
$L_{\Q(\chi(G))}^{(S)}(\phi_{\chi}^{-1},s+k)=1+O(2^{-k})$,
we may take the limit of equation (\ref{eq:prodL}) as $\uu\to \binfty$,
and the result follows.
\end{proof}
Now, let $c\in G$ be an element of order $2$, let $A=\Q[G]/(1+c)$,
let $R=\Z_{(S)}[G]/(1+c)$, let $P$ be the zero $A$-module, and let
$\cM$ and $\cF$ be as in the introduction.
Let $\hat{G}'=\{\chi\in\hat{G}: \chi(c) = -1\}$, so that
$\G(R)_{\tors} \cong \bigoplus_{\chi\in \hat{G}'/\sim}\Cl_{\chi(T)}$.
For a group homomorphism
$\phi=(\phi_{\chi})_{\chi\in \hat{G}'/\sim}\colon \G(R)_{\tors}\to \C^\times$,
define $f_{\phi}\colon \cM\to \C^\times$
by $f_{\phi}(M) = \phi([M])$ for all $M\in \cM$.
\begin{proposition}\label{prop:equidistr}
  Suppose that for all group homomorphisms $\phi\colon \G(R)_{\tors} \to \C^{\times}$,
  Heuristic \ref{he:CLM} holds with $K=\Q$, with $R$ and $P$ as just defined,
  and with $f=f_{\phi}$. Then, as $(F,\iota)$ ranges over $\cF$, the class
  of $R\otimes_{\Z[G]}\Cl_F$ in $\G(R)_{\tors}$ is equidistributed.
\end{proposition}
\begin{proof}
It follows from Theorem \ref{thm:analytic} that if
$\phi=(\phi_{\chi})_{\chi\in \hat{G}'/\sim}$ is a group
homomorphism as in the hypotheses, then the limit
\eqref{eq:expectation} for $f=f_{\phi}$ is equal to
$$
\lim_{s\to 0} \prod_{\chi\in \hat{G}'/\sim}\prod_{k=1}^\infty 
\left(\frac{L^{(S)}_{\Q(\chi(G))}(\phi_{\chi}^{-1},s+k)}{\zeta^{(S)}_{\Q(\chi(G))}(s+k)}\right),
$$
where $\zeta_{\Q(\chi(G))}^{(S)}(s)$ denotes the Dedekind zeta function of
the field $\Q(\chi(G))$ with the Euler factors at prime ideals not dividing
any element of $S$ omitted.
If $\phi$ is non-trivial, i.e. at least one $\phi_{\chi}$ is non-trivial,
then for any such $\chi$ the pole of the Dedekind zeta function of $\Q(\chi(G))$
at $s=1$ ensures that this limit is $0$.
If $x\in \G(R)_{\tors}$ is arbitrary, and $f_x\colon \cM\to \C$ is defined
by $f_x(M)=1$ if $[M]=x$ and $f_x(M)=0$ otherwise, then by the usual
Fourier theory (which, in this case, is character theory of finite abelian groups), we have 
$$
f_x = \frac{1}{\#\G(R)_{\tors}}\sum_{\phi}\overline{\phi(x)}\cdot f_\phi,
$$
where the sum runs over all homomorphisms $\phi\colon \G(R)_{\tors}\to \C^\times$.
Thus, the limit \eqref{eq:expectation} for $f=f_{x}$ is $\frac{1}{\#\G(R)_{\tors}}$,
with the only non-zero contribution coming from the trivial homomorphism, as claimed.
\end{proof}

In the remainder of the section we will show that the conclusion of Proposition
\ref{prop:equidistr} is, in fact, false, in general.
\subsection*{Roots of unity}
Let $F/\Q$ be an imaginary abelian field with Galois group $G$, and let $c\in G$
be the automorphism of $F$ given by complex conjugation.
We write $\hat{G}^-=\{\chi\in \hat{G}: \chi(c)=-1\}$, and $\ZSG^-=\ZSG/(1+c)$;
the latter ring may be identified with $\prod_{\chi\in \hat{G}^-/\sim}\chi(\ZSG)$,
and one has
\begin{eqnarray}\label{eq:grothendieck}
\G(\ZSG^-) \cong \bigoplus_{\chi\in \hat{G}^-/\sim}\G(\chi(\ZSG)).
\end{eqnarray}
For each $m\in \Z_{>0}$, we denote by $\zeta_m$ a primitive $m$-th root
of unity in some algebraic closure of $F$.
Let $U$ be the set of prime numbers $q\in S$ for which $F$ contains $\zeta_q$.
Recall from the introduction that $\mu_F$ denotes the group of roots of unity
in $F$.
\begin{proposition}\label{prop:A}
The group $\ZSG^-\otimes_{\Z[G]}\mu_F$ is cyclic of order $\prod_{q\in U}q$.
\end{proposition}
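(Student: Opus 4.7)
The plan is to unwind the tensor product, use that complex conjugation acts on roots of unity by inversion, and then determine the $S$-part of $\#\mu_F$ using the constraint that no prime in $S$ divides $\#G$.

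First, since $F$ is imaginary abelian, the element $c \in G$ has order $2$, so $2 \mid \#G$ and therefore $2 \notin S$; in particular every $q \in S$ is odd. Next, observe that complex conjugation acts on $\mu_F$ by inversion, so if we write $\mu_F$ additively (as a $\Z[G]$-module), then $(1+c)$ annihilates $\mu_F$. Since $\ZSG = \Z_{(S)} \otimes_\Z \Z[G]$ is flat over $\Z[G]$ in the sense that $\ZSG \otimes_{\Z[G]} \mu_F = \Z_{(S)} \otimes_\Z \mu_F$, and since $(1+c)$ already acts as zero on this tensor product, I would conclude
$$
\ZSG^- \otimes_{\Z[G]} \mu_F \;=\; \Z_{(S)} \otimes_\Z \mu_F.
$$

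Now $\mu_F$ is a finite cyclic group, say of order $n$, so this last group is $\Z_{(S)}/n\Z_{(S)}$, which is cyclic of order equal to the $S$-part of $n$, namely $\prod_{q \in S} q^{v_q(n)}$. It therefore suffices to show that for every $q \in S$ one has $v_q(n) \leq 1$, and that $v_q(n) = 1$ exactly when $q \in U$, i.e.\ when $\zeta_q \in F$. The equivalence $v_q(n) \geq 1 \iff \zeta_q \in F$ is immediate from the cyclicity of $\mu_F$. For the upper bound, suppose $q \in S$ is odd and $\zeta_{q^2} \in F$; then $\Q(\zeta_{q^2}) \subseteq F$, so $[\Q(\zeta_{q^2}) : \Q] = q(q-1)$ divides $\#G$, forcing $q \mid \#G$ and contradicting $q \in S$. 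Hence $v_q(n) \in \{0,1\}$ for $q \in S$, and the product $\prod_{q \in U} q$ over distinct primes is already the order of a cyclic group, giving the stated answer.

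The main obstacle, if there is one, is just checking that the action-by-inversion argument really collapses the $(1+c)$-quotient cleanly; once that is done, the numerical part is the elementary cyclotomic degree computation above. The argument is short and self-contained, using only that $F$ is imaginary (to pin down the action of $c$ on $\mu_F$), that $2 \mid \#G$ (to exclude $q = 2$ from $S$), and that $q(q-1) \nmid \#G$ for $q \in S$ odd (to bound $v_q(\#\mu_F)$).
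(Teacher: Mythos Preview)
Your proof is correct and follows essentially the same route as the paper: both compute $\Z_{(S)}\otimes_{\Z}\mu_F$ as a cyclic group whose order is the $S$-part of $\#\mu_F$, argue that no $q\in S$ can have $q^2\mid\#\mu_F$ (since this would force $q\mid\#G$), and observe that $c$ acts by inversion on $\mu_F$ so that passing from $\ZSG$ to $\ZSG^-$ changes nothing. One cosmetic remark: the identification $\ZSG\otimes_{\Z[G]}\mu_F\cong\Z_{(S)}\otimes_{\Z}\mu_F$ is not a matter of flatness but simply of the ring isomorphism $\Z_{(S)}[G]\cong\Z_{(S)}\otimes_{\Z}\Z[G]$; the subsequent passage to $\ZSG^-$ is justified, as you indicate, by right exactness of the tensor product together with $(1+c)\mu_F=0$.
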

\begin{proof}
Since $\mu_F$ is finite cyclic, the group $\Z_{(S)}\otimes_{\Z}\mu_F$ is cyclic
of order equal to the largest divisor of $\#\mu_F$ that is a product of primes in
$S$. If $q$ is a prime number for which $q^2$ divides $\#\mu_F$, then $q$ divides
$[F:\Q]=\#G$, and therefore $q\not\in S$. This implies that $\Z_{(S)}\otimes_{\Z}\mu_F$
is cyclic of order $\prod_{q\in U}q$. It is also a $\Z_{(S)}[G]$-module on which
$c$ acts as $-1$, so it equals $\ZSG^-\otimes_{\Z[G]}\mu_F$.
\end{proof}\noindent
For each $q\in U$, denote by $\varphi_q\colon \ZSG^-\to \End\langle\zeta_q\rangle\cong \Z/q\Z$
the ring homomorphism that describes the $\ZSG^-$-module structure of $\langle \zeta_q\rangle$.
\begin{proposition}\label{prop:B}
For each $q\in U$ there is an element $\chi_q\in \hat{G}^-$, unique up to $\sim$, such
that $\varphi_q$ factors as $\ZSG^-\to \chi_q(\ZSG)\to \Z/q\Z$, and it is characterised
by the subfield $F^{\ker\chi_q}\subset F$ being equal to $\Q(\zeta_q)$. Also, if $\fp_q$
denotes the kernel of $\chi_q(\ZSG)\to\Z/q\Z$, then the image of the element
$[\ZSG^-\otimes_{\Z[G]}\mu_F]\in \G(\ZSG^-)$ under the isomorphism {\rm (\ref{eq:grothendieck})}
equals the image of $(\chi_q(\ZSG)/\fp_q)_{q\in U}$ under the natural inclusion
$\bigoplus_{q\in U}\G(\chi_q(\ZSG))\subset \bigoplus_{\chi\in \hat{G}^-/\sim}\G(\chi(\ZSG))$.
\end{proposition}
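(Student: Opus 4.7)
The plan is to identify $\chi_q$ via the cyclotomic character that describes the $G$-action on $\mu_q=\langle\zeta_q\rangle$, then verify that $\varphi_q$ factors through $\chi_q(\ZSG)/\fp_q\cong\Z/q\Z$, and finally combine this with the decomposition of $\mu_F$ used in the proof of Proposition~\ref{prop:A}. To that end, let $\omega_q\colon G\to(\Z/q\Z)^\times$ be the character encoding the action on $\mu_q$: its kernel is $\mathrm{Gal}(F/\Q(\zeta_q))$, it is surjective since $\zeta_q\in F$, and $\omega_q(c)=-1$ because complex conjugation inverts $\zeta_q$. Two elements of $\hat{G}$ are $\sim$-equivalent exactly when they have the same kernel, so the characters with $\ker\chi=\ker\omega_q$ form a single $\sim$-class, namely the faithful characters of the cyclic quotient $G/\ker\omega_q\cong(\Z/q\Z)^\times$. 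Let $\chi_q$ be any representative; since $q$ is odd and $c\notin\ker\omega_q$, the image $\chi_q(c)$ has order $2$ in $\bar{\Q}^\times$, forcing $\chi_q(c)=-1$, so $\chi_q\in\hat{G}^-$. Galois theory then gives $F^{\ker\chi_q}=F^{\ker\omega_q}=\Q(\zeta_q)$, which is the asserted characterisation.

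Next, to exhibit the factorisation, I would use that the assignment $\chi_q(g)\mapsto\omega_q(g)$ is a well-defined isomorphism $\chi_q(G)\to(\Z/q\Z)^\times$ of cyclic groups of order $q-1$, and extends $\Z_{(S)}$-linearly to a ring surjection $\chi_q(\ZSG)=\Z_{(S)}[\chi_q(G)]\to\Z/q\Z$; the arithmetic input is that $X^{q-1}-1$ splits into distinct linear factors modulo $q$, so that a primitive $(q-1)$-th root of unity in $\chi_q(G)$ really does map to such a root in $\Z/q\Z$. Equivalently, since $q\equiv 1\pmod{q-1}$, the prime $q$ splits completely in $\Z[\zeta_{q-1}]$ with residue fields $\Z/q\Z$. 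Letting $\fp_q$ be the kernel of this surjection, the composition $\ZSG^-\to\chi_q(\ZSG)\to\Z/q\Z$ sends each $g\in G$ to $\omega_q(g)$ and hence agrees with $\varphi_q$. For uniqueness up to~$\sim$, if $\varphi_q$ factors through $\chi(\ZSG)$ for some $\chi\in\hat{G}^-$, then $\varphi_q$ must annihilate $e_{[\chi']}$ for every $[\chi']\ne[\chi]$; since the $e_{[\chi']}$ are orthogonal idempotents summing to $1$ and the only idempotents in $\Z/q\Z$ are $0$ and $1$, exactly one $[\chi]$ has $\varphi_q(e_{[\chi]})=1$. As $e_{[\chi_q]}$ acts as the identity on $\mu_q$ by construction, that class must be $[\chi_q]$.

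Finally, for the Grothendieck-class computation, the argument used in Proposition~\ref{prop:A} yields $\ZSG^-\otimes_{\Z[G]}\mu_F\cong\bigoplus_{q\in U}\mu_q$ as $\ZSG^-$-modules, and by the previous step each summand $\mu_q$ is concentrated in the $\chi_q$-component of the decomposition $\ZSG^-=\prod_{[\chi]}\chi(\ZSG)$, where it is isomorphic to $\chi_q(\ZSG)/\fp_q$. Translating through the isomorphism~(\ref{eq:grothendieck}) then delivers the asserted identification. The main obstacle I anticipate is in the middle step: one has to keep careful track of the canonical identification $\chi_q(\ZSG)/\fp_q\cong\Z/q\Z$ so that the resulting Grothendieck class genuinely matches $\mu_q$, and this rests on the elementary but easily overlooked observation that $q$ splits completely in $\Z[\zeta_{q-1}]$.
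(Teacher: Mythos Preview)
Your proof is correct and follows essentially the same approach as the paper's, with the minor difference that you first define $\chi_q$ by its kernel and then construct the factorisation, whereas the paper first invokes the factorisation (immediate since $\Z/q\Z$ is a field and $\ZSG^-$ is a product of domains) and then computes $\ker\chi_q$. The substance---the cyclotomic character $\omega_q$, the isomorphism $\chi_q(G)\cong(\Z/q\Z)^\times$, and the decomposition $\ZSG^-\otimes_{\Z[G]}\mu_F\cong\bigoplus_{q\in U}\chi_q(\ZSG)/\fp_q$---is identical.
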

\begin{proof}
Since $\Z/q\Z$ is a field, the map $\varphi_q$ factors through exactly one of the
components in the decomposition $\ZSG^-=\prod_{\chi\in \hat{G}^-/\sim}\chi(\ZSG)$,
say through $\chi_q(\ZSG)$. By the irreducibility of the $q$-th cyclotomic polynomial,
the induced map $G\to \chi_q(G)\to (\Z/q\Z)^\times$ is surjective, and
since $q$ does not divide $\#G$, the map $\chi_q(G)\to (\Z/q\Z)^\times$ is injective,
so the map $\chi_q(G)\to (\Z/q\Z)^\times$ is an isomorphism. This shows that
$\ker\chi_q=\ker(G\to (\Z/q\Z)^\times)$, so we have indeed $F^{\ker \chi_q}=\Q(\zeta_q)$.
In particular, the order of $\chi_q$ equals $q-1$, so $\chi_q\neq \chi_{q'}$
for distinct $q$, $q'\in U$. We have an isomorphism of $\ZSG^-$-modules $\ZSG^-\otimes_{\Z[G]}\mu_F
\cong \bigoplus_{q\in U}\chi_q(\ZSG)/\fp_q$, and this implies the last assertion.
\end{proof}
\subsection*{Maximal ideals}
Let $\chi \in \hat{G}^-$. We describe the set of non-zero prime ideals of the
Dedekind domain $\chi(\ZSG)=\Z_{(S)}[\chi(G)]$, which for non-empty $S$ coincides
with the set $\Maxspec \chi(\ZSG)$ of its maximal ideals. Denote by $[\chi]\in \hat{G}^-$
the equivalence class of $\chi$ under $\sim$. For each $p\in S$, let an embedding
of $\bar{\Q}$ in an algebraic closure $\bar{\Q}_p$ of the field $\Q_p$ of $p$-adic
numbers be fixed, so that the group $\hat{G}=\Hom(G,\bar{\Q}^\times)$ may be
identified with $\Hom(G,\bar{\Q}^\times_p)$; for $\psi$, $\psi'\in [\chi]$,
we write $\psi\sim_p \psi'$ if there exists $\sigma\in \Gal(\bar{\Q}_p/\Q_p)$
with $\psi'=\sigma\circ\psi$. For each $(p,\psi)\in S\times[\chi]$, the map
$\psi$ induces a ring homomorphism $\psi\colon \chi(\ZSG)\to \bar{\Q}_p$ of which
the image is contained in the discrete valuation ring $\Z_p[\psi(G)]$; we
write $\fm_{p,\psi}$ for the kernel of the resulting map from $\chi(\ZSG)$ to
the residue class field of $\Z_p[\psi(G)]$. This kernel is a non-zero prime ideal
of $\chi(\ZSG)$. Each non-zero prime ideal of $\chi(\ZSG)$ is of the form
$\fm_{p,\psi}$ with $(p,\psi)\in S\times[\chi]$, and one has $\fm_{p,\psi}=\fm_{p',\psi'}$
if and only if $p=p'$ and $\psi\sim_p\psi'$.
\begin{example}\label{ex:Teichmueller}
The prime ideal $\fp_q$ of $\chi_q(\ZSG)$ occurring in Proposition \ref{prop:B}
equals $\fm_{q,\omega_q}$, where $\omega_q\colon G\to \Z_q^\times$ is the unique
group homomorphism for which the induced map $G\to (\Z/q\Z)^\times$ describes the
$G$-module structure of $\langle\zeta_q\rangle$. The character $\omega_q\in \hat{G}$
is called the \emph{Teichm\"uller character at $q$}.
\end{example}
\subsection*{Bernoulli numbers} Let $\chi\in \hat{G}^-$, and let $f(\chi)\in \Z_{>0}$
be minimal with $F^{\ker\chi}\subset \Q(\zeta_{f(\chi)})$. For each $t\in \Z$
that is coprime to $f(\chi)$, denote by $\eta_t$ the restriction to $F^{\ker \chi}$
of the automorphism of $\Q(\zeta_{f(\chi)})$ that sends $\zeta_{f(\chi)}$ to
$\zeta_{f(\chi)}^t$; note that $\eta_t$ belongs to the Galois group
$\Gal(F^{\ker\chi}/\Q)$, which may be identified with $G/\ker\chi$ and with $\chi(G)$,
and indeed we shall view $\eta_t$ as an element of the cyclotomic field $\Q(\chi(G))$.
We define the Bernoulli number
$$
\beta(\chi)=
\sum_{\genfrac{}{}{0pt}{}{1\leq t\leq f(\chi),}{\gcd(t,f(\chi))=1}}\frac{t}{f(\chi)}\cdot\eta_t^{-1},
$$
which is also an element of the cyclotomic field $\Q(\chi(G))$. For each
$(p,\psi)\in S\times[\chi]$, the character $\psi$ induces a field embedding
$\Q(\chi(G))\to\bar{\Q}_p$, which we simply denote by $\psi$. The following
result relates the image $\psi(\beta(\chi))$ of the Bernoulli number to the
$\psi$-component $\Z_p[\psi(G)]\otimes_{\Z[G]}\Cl_F$ of the class group of $F$.

\begin{proposition}\label{prop:C}
\begin{enumerate}[leftmargin=*, label={\upshape(\alph*)}]
\item\label{item:Ca} Let $\chi\in \hat{G}^-$ and $(p,\psi)\in S\times[\chi]$. If $q\in U$ is
such that $\chi\sim\chi_q$, assume $(p,\psi)\neq(q,\omega_q)$, where $\omega_q$ is
as in Example \ref{ex:Teichmueller}. Let $g$ be the
rank of $\Z_p[\psi(G)]$ as a $\Z_p$-module. Then the order of
$\Z_p[\psi(G)]\otimes_{\Z[G]}\Cl_F$ equals a unit of $\Z_p[\psi(G)]$ times $\psi(\beta(\chi))^g$.
\item\label{item:Cb} Let $q\in U$. Then $\Z_q[\omega_q(G)]\otimes_{\Z[G]}\Cl_F=\{0\}$, and
$q\cdot \omega_q(\beta(\chi_q))\in \Z_q^\times$.
\end{enumerate}
\end{proposition}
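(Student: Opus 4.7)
The plan is to deduce Proposition \ref{prop:C} from the Iwasawa Main Conjecture for abelian fields, proven by Mazur--Wiles in \cite{MW}, in its finite-level form. The first observation is that $\psi(\beta(\chi)) = B_{1,\psi^{-1}}$, the generalized Bernoulli number, which follows by a direct calculation from the definition of $\beta(\chi)$: writing $\psi^{-1}$ as a Dirichlet character modulo $f(\chi)$ and using $\psi(\eta_t)^{-1} = \psi^{-1}(t)$, one obtains $\psi(\beta(\chi)) = f(\chi)^{-1}\sum_{t} t\cdot\psi^{-1}(t) = B_{1,\psi^{-1}}$.

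For part (a), since $p\nmid\#G$, the ring $\Z_p[\psi(G)]$ is the ring of integers in the unramified extension $\Q_p(\psi(G))/\Q_p$ of degree $g$. The standard finite-level consequence of the Main Conjecture (cf.\ \cite[Theorem 5.5]{Greither}) asserts that the Fitting ideal of the $\Z_p[\psi(G)]$-module $\Z_p[\psi(G)]\otimes_{\Z[G]}\Cl_F$ is generated, up to a unit, by $\psi(\beta(\chi))$, outside of the excluded case. For a finite module $M$ over the discrete valuation ring $\Z_p[\psi(G)]$ with Fitting ideal $(\alpha)$, one has $|M|=p^{g\cdot v(\alpha)}$, where $v$ is the normalized valuation; as $\alpha^g$ and $p^{g\cdot v(\alpha)}$ both have valuation $g\cdot v(\alpha)$ (the extension being unramified), they generate the same ideal in $\Z_p[\psi(G)]$, giving the claim.

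For part (b), the unit assertion $q\cdot\omega_q(\beta(\chi_q))\in\Z_q^\times$ follows from a short direct calculation: $q\cdot\omega_q(\beta(\chi_q))=\sum_{t=1}^{q-1}t\cdot\omega_q(\eta_t)^{-1}$, and the defining congruence $\omega_q(\eta_t)\equiv t\pmod q$ reduces this modulo $q$ to $\sum_{t=1}^{q-1}t\cdot t^{-1} = q-1\equiv -1\pmod q$, which is a unit. For the vanishing $\Z_q[\omega_q(G)]\otimes_{\Z[G]}\Cl_F=\{0\}$, I would invoke the classical fact that the $\omega_q$-isotypic component of $\Cl_F\otimes\Z_q$ vanishes, which reduces to the vanishing of the $\omega_q$-part of $\Cl_{\Q(\zeta_q)}\otimes\Z_q$; this in turn holds because that component is generated by the unique prime of $\Q(\zeta_q)$ above $q$, which is principal, generated by $1-\zeta_q$.

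The main obstacle is the handling of the exceptional pair $(p,\psi)=(q,\omega_q)$, which corresponds to the \emph{trivial zero} of the Euler factor $1-\psi(p)$ in the interpolation formula for $L_p(0,\psi\omega)$. At this pair, the Main Conjecture does not pin down the order of the class group component, since the $p$-adic $L$-value vanishes; a separate direct argument (as sketched for part (b)) is required to establish the actual triviality of the class group component and to identify the extra pole in $\psi(\beta(\chi))$ accounting for the discrepancy with the formula in part (a).
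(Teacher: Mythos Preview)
Your approach is essentially the paper's: both derive part~(a) and the vanishing assertion in~(b) from Mazur--Wiles, and compute the unit assertion in~(b) by the same direct reduction modulo~$q$. The paper is terser---it simply cites \cite[Ch.~1, \S10, Theorem~2]{MW} for~(a) and \cite[Ch.~1, \S10, Remark~1]{MW} for the vanishing---whereas you unpack~(a) through the Fitting ideal over the unramified DVR $\Z_p[\psi(G)]$, which is a correct and helpful elaboration.

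One point to correct: your sketched reason for the vanishing of the $\omega_q$-eigenspace of $\Cl_{\Q(\zeta_q)}\otimes\Z_q$ is not right as stated. The principality of the prime above $q$ tells you only that its class is trivial in every quotient of the class group; it does \emph{not} tell you that this class generates the $\omega_q$-component, so nothing follows about the size of that component. The standard argument (e.g.\ Washington, \emph{Introduction to Cyclotomic Fields}, Proposition~6.16) instead shows that a nontrivial unramified $\Z/q$-extension of $\Q(\zeta_q)$ with $\omega_q$-action would, together with the totally ramified $\omega_q$-extension $\Q(\zeta_{q^2})/\Q(\zeta_q)$, produce an everywhere-unramified extension of $\Q$, which is impossible. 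Alternatively, you may simply cite Mazur--Wiles as the paper does.
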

\begin{proof}
Part \ref{item:Ca} is, stated with different notation, the same as
Theorem 2 in \cite[Ch. 1, \S 10]{MW}. The first assertion of part \ref{item:Cb}
is Remark 1 in \cite[Ch. 1, \S 10]{MW} following the same theorem. For the second
assertion of \ref{item:Cb}, note that by Proposition \ref{prop:B} we have
$F^{\ker \chi_q}=\Q(\zeta_q)$, so $f(\chi_q)=q$, and therefore
$$
q\cdot \omega_q(\beta(\chi))=\sum_{t=1}^{q-1}t\cdot\omega_q(\eta_t)^{-1}.
$$
Here $\omega_q(\eta_t)$ is a $(q-1)$-th root of unity in $\Z_q$, and, by
definition of $\omega_q$ and $\eta_t$, it maps to $(t \bmod q)$ in $\Z/q\Z$. Hence,
$q\cdot\omega_q(\beta(\chi_q))$ is an element of $\Z_q$ that maps to $(q-1 \bmod q)$
and therefore belongs to $\Z_q^\times$.
\end{proof}\noindent
The following result describes, for each $\chi\in \hat{G}^-$, the finite
$\chi(\ZSG)$-module $\chi(\ZSG)\otimes_{\Z[G]}\Cl_F$ up to Jordan--H\"older isomorphism
in terms of Bernoulli numbers. We let $\fp_q$ be the prime ideal of $\chi_q(\ZSG)$
introduced in Proposition \ref{prop:B}.
\begin{proposition}\label{prop:D}
Let $\chi\in \hat{G}^-$.
\begin{enumerate}[leftmargin=*, label={\upshape(\alph*)}]
\item\label{item:Da} If there does not exist $q\in U$ such that $\chi\sim \chi_q$, then one
has $\beta(\chi)\in \chi(\ZSG)$, and the $\chi(\ZSG)$-modules $\chi(\ZSG)\otimes_{\Z[G]}\Cl_F$
and $\chi(\ZSG)/(\beta(\chi))$ have isomorphic Jordan--H\"older series.
\item\label{item:Db} If $q\in U$ is such that $\chi=\chi_q$, then one has $\fp_q\beta(\chi)
\subset \chi(\ZSG)$, and the $\chi(\ZSG)$-modules $\chi(\ZSG)\otimes_{\Z[G]}\Cl_F$
and $\chi(\ZSG)/\fp_q\beta(\chi)$ have isomorphic Jordan--H\"older series.
\end{enumerate}
\end{proposition}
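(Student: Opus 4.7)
The plan is to exploit that $\chi(\ZSG)$ is a Dedekind domain, so two finite $\chi(\ZSG)$-modules have isomorphic Jordan--H\"older series if and only if their lengths agree at each maximal ideal $\fm_{p,\psi}$. Since $\Z_p[\psi(G)]$ is the $\fm_{p,\psi}$-adic completion of $\chi(\ZSG)$, the length at $\fm_{p,\psi}$ of any finite $\chi(\ZSG)$-module $N$ coincides with the length of $\Z_p[\psi(G)] \otimes_{\chi(\ZSG)} N$ over the discrete valuation ring $\Z_p[\psi(G)]$. The strategy is therefore, for each $(p,\psi) \in S \times [\chi]$, to compute the length of both sides at $\fm_{p,\psi}$ using Proposition \ref{prop:C} and to check that they agree.

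For part \ref{item:Da}, Proposition \ref{prop:C}\ref{item:Ca} applies at every $(p,\psi)$. Reading its conclusion at the level of $\fm_{p,\psi}$-adic valuations, and using that $g$ equals the product of the ramification index and the residue degree of $\Z_p[\psi(G)]$ over $\Z_p$, I would establish the identity
\[
\ell_{\fm_{p,\psi}}\bigl(\Z_p[\psi(G)] \otimes_{\Z[G]} \Cl_F\bigr) = v_{\fm_{p,\psi}}(\psi(\beta(\chi))).
\]
In particular $\psi(\beta(\chi)) \in \Z_p[\psi(G)]$ for every $(p,\psi)$, so $\beta(\chi) \in \chi(\ZSG)$ because a Dedekind domain is the intersection of its localisations. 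The length of $\chi(\ZSG)/(\beta(\chi))$ at $\fm_{p,\psi}$ is by definition $v_{\fm_{p,\psi}}(\psi(\beta(\chi)))$, so the Jordan--H\"older series match.

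For part \ref{item:Db}, let $\chi = \chi_q$. At each prime $\fm_{p,\psi} \neq \fp_q$, the argument of part \ref{item:Da} still applies and yields both integrality of $\psi(\beta(\chi_q))$ at that prime and the correct length. At $\fp_q = \fm_{q,\omega_q}$, Proposition \ref{prop:C}\ref{item:Cb} shows that the length of $\Z_q[\omega_q(G)] \otimes_{\Z[G]} \Cl_F$ is $0$ and that $q\cdot\omega_q(\beta(\chi_q)) \in \Z_q^\times$; since $\Q_q$ already contains all $(q-1)$-th roots of unity, $\Z_q[\omega_q(G)] = \Z_q$, so $v_{\fp_q}(q) = 1$ and therefore $\omega_q(\beta(\chi_q))$ has a simple pole at $\fp_q$. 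Combining these facts yields $\fp_q\beta(\chi_q) \subset \chi_q(\ZSG)$, and at every prime $\fm$ of $\chi_q(\ZSG)$ the length of $\chi_q(\ZSG)/\fp_q\beta(\chi_q)$ equals $v_\fm(\fp_q) + v_\fm(\beta(\chi_q))$, which matches the length coming from Proposition \ref{prop:C}: both are $0$ at $\fp_q$, and elsewhere both equal $v_\fm(\beta(\chi_q))$.

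The main step will be translating the statement of Proposition \ref{prop:C}\ref{item:Ca}, which reads cardinalities against Bernoulli values mediated by a unit and the exponent $g$, into the clean $\fm_{p,\psi}$-adic valuation identity displayed above; the relation $g = ef$ provides the bridge. Once that dictionary is in place, part \ref{item:Da} follows by inspection, and in part \ref{item:Db} the simple pole of $\beta(\chi_q)$ at $\fp_q$ is precisely cancelled by multiplication by $\fp_q$, while compatibility at the remaining primes is immediate from $v_\fm(\fp_q) = 0$ for $\fm \neq \fp_q$.
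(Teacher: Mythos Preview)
Your proposal is correct and follows essentially the same approach as the paper: both argue prime by prime over the maximal ideals $\fm_{p,\psi}$ of the Dedekind domain $\chi(\ZSG)$, invoking Proposition~\ref{prop:C} to read off the relevant valuations (the paper phrases this as comparing cardinalities of $\fm$-primary parts, you as comparing lengths, which is equivalent). Your explicit use of $g=ef$ to translate the cardinality statement of Proposition~\ref{prop:C}\ref{item:Ca} into the valuation identity is exactly the computation the paper leaves implicit.
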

\begin{proof}
In case \ref{item:Da}, one sees from Proposition \ref{prop:C}\ref{item:Ca} that
for every non-zero prime ideal $\fm=\fm_{p,\psi}$ of $\chi(\ZSG)$ the element
$\beta(\chi)\in \Q(\chi(G))$ has non-negative valuation at $\fm$, so one has
$\beta(\chi)\in \chi(\ZSG)$. In case \ref{item:Db}, Proposition \ref{prop:C} shows
that the same assertion has the single exception $\fm=\fp_q=\fm_{q, \omega_q}$,
and that $\beta(\chi)$ has valuation $-1$ at $\fp_q$; so in that case one has
$\fp_q\beta(\chi)\subset \chi(\ZSG)$.

Proposition \ref{prop:C} implies that $\beta(\chi)\neq 0$, so all $\chi(\ZSG)$-modules
occurring in Proposition \ref{prop:D} are finite; and two finite $\chi(\ZSG)$-modules
have isomorphic Jordan--H\"older series if and only if, for each non-zero prime
ideals $\fm=\fm_{p,\psi}$ of $\chi(\ZSG)$, their $\fm$-primary parts have the same
cardinality. In case \ref{item:Da}, Proposition \ref{prop:C}\ref{item:Ca} shows
that this is indeed the case for the two modules $\chi(\ZSG)\otimes_{\Z[G]}\Cl_F$
and $\chi(\ZSG)/(\beta(\chi))$. In case \ref{item:Db}, there is again a single
$\fm=\fm_{q,\omega_q}$ that requires special treatment; Proposition
\ref{prop:C}\ref{item:Cb} shows that in this exceptional case both modules have
trivial $\fm$-primary parts.
\end{proof}\noindent
We are now ready to prove Theorem \ref{thm:introimagabelian}. We first recall
the statement.
\begin{theorem}\label{thm:imagabelian}
Let $F/\Q$ be a finite imaginary abelian extension, let $G$ be its Galois group,
let $c\in G$ denote complex conjugation, let $S$ be a set of prime
numbers not dividing $\#G$, and let $\ZSG^-=\Z_{(S)}[G]/(1+c)$. Then we have
$[\ZSG^-\otimes_{\Z[G]}\Cl_F]=[\ZSG^-\otimes_{\Z[G]}\mu_F]$
in $\G(\ZSG^-)$.
\end{theorem}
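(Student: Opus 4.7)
The plan is to verify the claim componentwise after applying the decomposition $\G(\ZSG^-) \cong \bigoplus_{\chi\in \hat{G}^-/\sim}\G(\chi(\ZSG))$ from (\ref{eq:grothendieck}). Since $\mu_F$ and $\Cl_F$ are finite, both $\ZSG^-\otimes_{\Z[G]}\mu_F$ and $\ZSG^-\otimes_{\Z[G]}\Cl_F$ are finite $\ZSG^-$-modules, so both classes lie in the torsion subgroup $\G(\ZSG^-)_{\tors}\cong \bigoplus_{\chi\in \hat{G}^-/\sim}\Cl_{\chi(\ZSG)}$. It thus suffices to show, for each $\chi\in \hat{G}^-/\sim$, that the $\chi$-components of the two sides agree in $\Cl_{\chi(\ZSG)}$.

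For the right-hand side, Proposition \ref{prop:B} gives the answer directly: in the $\chi_q$-component for each $q\in U$ it is the class of $\chi_q(\ZSG)/\fp_q$, which projects to $[\fp_q]\in \Cl_{\chi_q(\ZSG)}$, and all other components vanish.

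For the left-hand side, I combine Proposition \ref{prop:D} with the standard fact that over a Dedekind domain $D$ any finite module has class in $\G(D)$ determined by its Jordan--H\"older series, and that under the splitting $\G(D)\cong \Cl_D\oplus \Z$ recalled at the start of the section, the class $[D/I]$ of a cyclic torsion module with non-zero $I$ projects to the ideal class $[I]\in \Cl_D$. Applied to $D=\chi(\ZSG)$, this yields: in case \ref{item:Da} of Proposition \ref{prop:D}, the class $[\chi(\ZSG)\otimes_{\Z[G]}\Cl_F]$ equals that of $\chi(\ZSG)/(\beta(\chi))$, which projects to the trivial class because $(\beta(\chi))$ is a principal ideal; in case \ref{item:Db}, where $\chi=\chi_q$ for some $q\in U$, it equals $[\chi_q(\ZSG)/\fp_q\beta(\chi_q)]$, and the integral ideal $\fp_q\beta(\chi_q)=\fp_q\cdot(\beta(\chi_q))$ differs from $\fp_q$ only by a principal factor, so has class $[\fp_q]$ in $\Cl_{\chi_q(\ZSG)}$.

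Comparing componentwise, both sides contribute $[\fp_q]$ in the $\chi_q$-component for each $q\in U$ and zero elsewhere, completing the proof. The substantive input is Proposition \ref{prop:D}, which in turn rests on the Iwasawa Main Conjecture via Proposition \ref{prop:C}; the step I expect to be most delicate to phrase cleanly is the passage from \emph{isomorphic Jordan--H\"older series} to \emph{equal classes in $\G$}, but this is routine given the Dedekind structure of each $\chi(\ZSG)$.
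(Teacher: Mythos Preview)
Your proof is correct and follows essentially the same componentwise strategy as the paper, invoking Proposition~\ref{prop:D} for the class group side and Proposition~\ref{prop:B} for the $\mu_F$ side; the paper phrases the key computations via short exact sequences (e.g.\ $0\to\fp_q\xrightarrow{\beta(\chi)}\chi(\ZSG)\to\chi(\ZSG)/\fp_q\beta(\chi)\to 0$) rather than projecting to $\Cl_{\chi(\ZSG)}$, but this is only a cosmetic difference. One small slip: under the splitting recalled in the paper, $[D/I]=[D]-[I]$ projects to the \emph{inverse} of the ideal class of $I$, not to $[I]$ itself---however, since this sign affects both sides of your comparison identically, the conclusion is unaffected.
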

\begin{proof}
We will compute $[\ZSG^-\otimes_{\Z[G]}\Cl_F]\in \G(\ZSG^-)\cong\bigoplus_{\chi\in \hat{G}^-/\sim}\G(\chi(\ZSG))$
component by component, and compare the result with Proposition \ref{prop:B}.
Let $\chi\in \hat{G}^-$. If for all $q\in U$ one has $\chi\not\sim\chi_q$, then
by Proposition \ref{prop:D}\ref{item:Da} the $\chi$-component
$[\chi(\ZSG)\otimes_{\Z[G]}\Cl_F]$ equals $[\chi(\ZSG)/(\beta(\chi))]$, and the exact
sequence
$$
0\longrightarrow \chi(\ZSG) \stackrel{\beta(\chi)}{\longrightarrow}\chi(\ZSG)
\longrightarrow \chi(\ZSG)/(\beta(\chi))\longrightarrow 0
$$
shows that $[\chi(\ZSG)/(\beta(\chi))]=0$. If, on the other hand, $q\in U$ is
such that $\chi=\chi_q$, then one has $[\chi(\ZSG)\otimes_{\Z[G]}\Cl_F]=
[\chi(\ZSG)/\fp_q\beta(\chi)]$, and the exact sequence
$$
0\longrightarrow \fp_q \stackrel{\beta(\chi)}{\longrightarrow}\chi(\ZSG)
\longrightarrow \chi(\ZSG)/\fp_q\beta(\chi)\longrightarrow 0
$$
shows that $[\chi(\ZSG)/\fp_q\beta(\chi)]=[\chi(\ZSG)] - [\fp_q] = [\chi(\ZSG)/\fp_q]$.
Comparing this description of $[\ZSG^-\otimes_{\Z[G]}\Cl_F]$ with the description
of $[\ZSG^-\otimes_{\Z[G]}\mu_F]$ given by Proposition \ref{prop:B}, one finds
that they are equal.
\end{proof}\noindent
\begin{corollary}\label{cor:imagabelian}
Under the hypotheses of Theorem \ref{thm:imagabelian}, suppose that for all
$q\in U$, the class group of $\Q(\zeta_{q-1})$ is trivial. Then
$[\ZSG^-\otimes_{\Z[G]}\Cl_F]=0$ in $\G(\ZSG^-)$.
\end{corollary}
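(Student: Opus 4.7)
The plan is to reduce the assertion via Theorem \ref{thm:imagabelian} to showing that $[\ZSG^-\otimes_{\Z[G]}\mu_F]$ vanishes, and then inspect the $q$-components provided by Proposition \ref{prop:B}. Indeed, by Theorem \ref{thm:imagabelian} we have
$$
[\ZSG^-\otimes_{\Z[G]}\Cl_F]=[\ZSG^-\otimes_{\Z[G]}\mu_F],
$$
so it is enough to prove that the right-hand side is zero. By Proposition \ref{prop:B}, under the decomposition (\ref{eq:grothendieck}) the class $[\ZSG^-\otimes_{\Z[G]}\mu_F]$ corresponds to the tuple whose $\chi$-component is $[\chi_q(\ZSG)/\fp_q]\in \G(\chi_q(\ZSG))$ when $\chi\sim\chi_q$ for some $q\in U$, and zero otherwise. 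Thus the task reduces to proving that $[\chi_q(\ZSG)/\fp_q]=0$ in $\G(\chi_q(\ZSG))$ for every $q\in U$.

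For this, I would use the short exact sequence
$$
0\longrightarrow \fp_q\longrightarrow \chi_q(\ZSG)\longrightarrow \chi_q(\ZSG)/\fp_q\longrightarrow 0
$$
to rewrite $[\chi_q(\ZSG)/\fp_q]=[\chi_q(\ZSG)]-[\fp_q]$. Under the canonical splitting $\G(\chi_q(\ZSG))\cong \Cl_{\chi_q(\ZSG)}\oplus \Z$ recalled in the generalities of Section \ref{sec:imagab}, $[\chi_q(\ZSG)]$ is the trivial ideal class together with rank one, whereas $[\fp_q]$ is the ideal class of $\fp_q$ together with rank one. Consequently $[\chi_q(\ZSG)/\fp_q]$ vanishes if and only if $\fp_q$ is principal, i.e.\ if the class of $\fp_q$ is trivial in $\Cl_{\chi_q(\ZSG)}$.

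It therefore suffices to show that $\Cl_{\chi_q(\ZSG)}$ is trivial for each $q\in U$. By the proof of Proposition \ref{prop:B}, the map $\chi_q(G)\to(\Z/q\Z)^\times$ is an isomorphism, so $\chi_q(G)$ is a cyclic group of order $q-1$ in $\bar{\Q}^\times$. Hence $\Q(\chi_q(G))=\Q(\zeta_{q-1})$ and $\chi_q(\ZSG)=\Z_{(S)}[\zeta_{q-1}]$, which is a localization of the Dedekind domain $\cO_{\Q(\zeta_{q-1})}$ obtained by inverting the primes above those outside $S$. Since localization induces a surjection on class groups, $\Cl_{\chi_q(\ZSG)}$ is a quotient of the class group of $\Q(\zeta_{q-1})$, which is trivial by hypothesis. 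This yields $[\fp_q]=0$, hence $[\chi_q(\ZSG)/\fp_q]=0$, and the corollary follows.

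There is no real obstacle here; the only step that requires a moment of care is the identification $\chi_q(\ZSG)=\Z_{(S)}[\zeta_{q-1}]$ and the observation that its class group is a quotient of $\Cl(\cO_{\Q(\zeta_{q-1})})$, after which the conclusion is immediate.
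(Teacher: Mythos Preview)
Your proof is correct and follows essentially the same route as the paper: reduce via Theorem~\ref{thm:imagabelian} to $[\ZSG^-\otimes_{\Z[G]}\mu_F]$, invoke Proposition~\ref{prop:B}, and then argue that each $\fp_q$ is principal because $\Cl_{\chi_q(\ZSG)}$ is a quotient of the class group of $\Q(\zeta_{q-1})$. The paper's proof is terser and leaves the identification $\chi_q(\ZSG)=\Z_{(S)}[\zeta_{q-1}]$ and the localization argument implicit, but the underlying reasoning is the same.
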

\begin{proof}
By Theorem \ref{thm:imagabelian}, we have $[\ZSG^-\otimes_{\Z[G]}\Cl_F]=[\ZSG^-\otimes_{\Z[G]}\mu_F]$
in $\G(\ZSG^-)$. Since for all $q\in U$, the class group of $\Q(\zeta_{q-1})$ is
trivial, the ideals $\fp_q$ from Proposition \ref{prop:B} are principal for all
$q\in U$, so the result follows from Proposition \ref{prop:B}.
\end{proof}
\subsection*{Counterexample to the Cohen--Lenstra--Martinet heuristics}
We will now show that the conclusion of Corollary \ref{cor:imagabelian}
contradicts the conclusion of Proposition \ref{prop:equidistr}, thus disproving
Heuristic \ref{he:CLM}.


Let $G$ be a cyclic group of order $58$, let $S$ contain all prime numbers
except $2$ and $29$, and let $c\in G$ be the unique element of order $2$.
With these choices, if $F/\Q$ is a Galois extension with Galois group isomorphic
to $G$ such that complex conjugation is given by $c$, then
$\ZSG^-\cong \Z_{(S)}\times \Z_{(S)}[\zeta_{29}]$. Let $R=\ZSG^-$, let $P$ be
the zero $A$-module, and let $\cM$ and $\cF$ be as in the introduction.
With these choices, $\cF$ is the family of imaginary cyclic number fields
of degree $58$. The group $\G(R)_{\tors}$ is isomorphic to
the class group $\Cl_{\Z_{(S)}[\zeta_{29}]}$, which one can check to be
elementary abelian of order $8$.
%
%
For every $F$ in the family under consideration except for $F=\Q(\zeta_{59})$,
the set $U$ that was introduced before
Proposition \ref{prop:A} is either empty or equal to $\{3\}$.
It follows from Corollary \ref{cor:imagabelian} that, unless $F=\Q(\zeta_{59})$,
the class $[R\otimes_{\Z[G]}\Cl_F]$ in $\G(R)$ is $0$, contradicting
the conclusion of Proposition \ref{prop:equidistr}.

%
%


\section{Arakelov class groups of real abelian fields}\label{sec:realab}\noindent
In the present section we reinterpret Theorem \ref{thm:imagabelian} in terms
of so-called oriented Arakelov class groups, proving Theorem
\ref{thm:introrealabelian} along the way. This reinterpretation leads to a more
general result, Theorem \ref{thm:allabelian}, which pertains to all finite abelian
extensions of $\Q$, and which might conceivably be true in much greater generality;
see Question \ref{que}.

We will use the generalities on group rings explained at the beginning of
Section \ref{sec:imagab}. In particular, $G$ still denotes a finite abelian group,
and we retain the notation $S$, $\ZSG$, $\hat{G}$.

Assume that $S$ does not contain any prime numbers dividing $2\cdot \#G$.
Let $F$ be a real abelian number field with Galois group $G$ over $\Q$, and
let $\cO$ denote the ring of integers of $F$. If $k\neq \Q$ is a subfield of
$F$ that is cyclic over $\Q$, let $\alpha_k=\Norm_{\Q(\zeta_f)/k}(1-\zeta_f)$,
where $f\in \Z_{>0}$ is minimal with $k\subset \Q(\zeta_{f})$, and $\zeta_f$
denotes an arbitrarily chosen primitive $f$-th root of $1$ in an algebraic
closure of $F$.
Let $D$ be the subgroup of $F^\times$ generated by the $G$-orbits of $\alpha_k$
for all subfields $k\neq \Q$ of $F$ that are cyclic over $\Q$. We follow Mazur--Wiles \cite{MW}
in defining the \emph{group of cyclotomic units} of $F$ to be
$C=D\cap \cO^\times = \ker(|\Norm_{F/\Q}|\colon D\to \Q^\times)$.
This group forms a $\Z[G]$-submodule of $\cO^\times$. We will write $-_{(S)}$
for $\Z_{(S)}\otimes_{\Z}-$.
We will write the $\ZSG$-modules $D_{(S)}$, $C_{(S)}$, $\cO^\times_{(S)}$ additively,
so that $C_{(S)}$ is the kernel of the map $D_{(S)}\to D_{(S)}$, $x\mapsto \Norm_{F/\Q}x$,
where $\Norm_{F/\Q} = \sum_{g\in G}g\in \ZSG$.
%
%

\begin{proposition}\label{prop:cycunits}
The $\ZSG$-module $C_{(S)}$ is isomorphic to $\ZSG/(\sum_{g\in G}g)$.
\end{proposition}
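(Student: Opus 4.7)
The plan is to prove the isomorphism by using the character decomposition of $\mathbf{Z}_{(S)}[G]$ and matching the two sides component by component.

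Since $S$ contains no primes dividing $\#G$, the ring $\mathbf{Z}_{(S)}[G]$ is a maximal $\mathbf{Z}_{(S)}$-order in $\mathbf{Q}[G]$ and splits as $\mathbf{Z}_{(S)}[G]\cong\prod_{[\chi]\in\hat G/\sim}\mathbf{Z}_{(S)}[\chi(G)]$, each factor a Dedekind domain. Under this splitting $\sum_{g\in G}g$ corresponds to $\#G$ on the trivial-character factor and to $0$ on every other factor; as $\#G\in\mathbf{Z}_{(S)}^\times$,
$$
\mathbf{Z}_{(S)}[G]\big/\bigl({\textstyle\sum_{g}g}\bigr) \;\cong\; \prod_{[\chi]\neq [1]} \mathbf{Z}_{(S)}[\chi(G)].
$$
The task is then to produce the same decomposition for $C_{(S)}$. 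On the trivial-character component, $\sum_g g$ acts as $\#G$, so $e_{[1]}C_{(S)}=\ker(\#G\cdot{-})=0$, matching. For each nontrivial $[\chi]$, $\sum_g g$ acts as $0$, so $e_{[\chi]}C_{(S)}=e_{[\chi]}D_{(S)}$, and it suffices to show this is free of rank $1$ over $\mathbf{Z}_{(S)}[\chi(G)]$.

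This module is finitely generated (since $D$ is generated over $\mathbf{Z}[G]$ by the $\alpha_k$), torsion-free (using $2\in\mathbf{Z}_{(S)}^\times$ to kill the $\{\pm 1\}$ torsion of the totally real field $F$), and of rank $1$ by Dirichlet's unit theorem together with the classical fact that $[\mathcal{O}_F^\times:C]$ is finite for abelian $F$. Hence $e_{[\chi]}D_{(S)}$ is a rank-$1$ projective $\mathbf{Z}_{(S)}[\chi(G)]$-module, i.e., isomorphic to some fractional ideal, and the task is to show this ideal is principal. I would do this by exhibiting a generator: set $k_\chi=F^{\ker\chi}$ and $\varepsilon_\chi=e_{[\chi]}\alpha_{k_\chi}$. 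For any cyclic $k\neq\mathbf{Q}$ with $k\not\supseteq k_\chi$, the restriction $\chi|_{\mathrm{Gal}(F/k)}$ is nontrivial while $\alpha_k$ is $\mathrm{Gal}(F/k)$-invariant, so $e_{[\chi]}\alpha_k=0$; hence $e_{[\chi]}D_{(S)}$ is generated by $\{e_{[\chi]}\alpha_k:k\supseteq k_\chi\text{ cyclic}\}$. Applying the classical distribution (norm-compatibility) relations among the elements $1-\zeta_n$ to express $\alpha_k$ in terms of $\alpha_{k_\chi}$ and Euler-factor-like terms involving Frobenius elements at primes dividing $f_k/f_{k_\chi}$, one finds that each $e_{[\chi]}\alpha_k$ lies in $\mathbf{Z}_{(S)}[\chi(G)]\cdot\varepsilon_\chi$. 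Combined with the non-vanishing of $\varepsilon_\chi$ (which follows from $L(1,\chi)\neq 0$ via the analytic class number formula for $k_\chi$, guaranteeing injectivity of $r\mapsto r\varepsilon_\chi$), this yields the isomorphism $\mathbf{Z}_{(S)}[\chi(G)]\xrightarrow{\sim} e_{[\chi]}D_{(S)}=e_{[\chi]}C_{(S)}$.

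The main obstacle is the application of the distribution relations: one must verify that after projection to the $[\chi]$-isotypical part, the coefficients expressing $e_{[\chi]}\alpha_k$ in terms of $\varepsilon_\chi$ actually lie in the order $\mathbf{Z}_{(S)}[\chi(G)]$ and not merely in its field of fractions. The invertibility of $\#G$ in $\mathbf{Z}_{(S)}$ is essential here for absorbing denominators introduced by the idempotents $e_{[\chi]}$, and a case analysis on whether primes ramified in $k/\mathbf{Q}$ are already ramified in $k_\chi/\mathbf{Q}$ is needed to handle the various conductor contributions cleanly.
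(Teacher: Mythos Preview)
Your approach is correct and follows the same overall plan as the paper: decompose over characters and show that each nontrivial $\chi$-component of $C_{(S)}$ is free of rank one, generated by the image of $\alpha_{k_\chi}$ where $k_\chi=F^{\ker\chi}$. The one real difference is how you eliminate the remaining generators $e_{[\chi]}\alpha_k$ for cyclic $k\supsetneq k_\chi$. You invoke the distribution relations directly and flag the integrality of the resulting coefficients as the main obstacle. The paper instead reduces to the case where $\chi$ is faithful on $G$ (so $k_\chi=F$ and there are no such extra $k$): since $\Norm_{F/k_\chi}=\sum_{g\in\ker\chi}g$ acts on the $\chi$-component as the unit $[F:k_\chi]\in\Z_{(S)}^\times$, the containment $\Norm_{F/k_\chi}C_{F,(S)}\subseteq C_{k_\chi,(S)}$ (which the paper leaves as a ``direct calculation'') yields $\chi(\ZSG)\otimes_\ZSG C_{F,(S)}=\chi(\ZSG)\otimes_\ZSG C_{k_\chi,(S)}$, and one finishes by applying the faithful case to $k_\chi$. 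The paper's norm reduction is a little cleaner, packaging the norm-compatibility of cyclotomic units into a single containment, whereas your route makes essentially the same content explicit; both ultimately rest on the same relations.
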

\begin{proof}
We will use without further explicit mention the facts that the ring $\Z_{(S)}$ is
flat over $\Z$, and, for every $\chi\in \hat{G}$, the ring $\chi(\ZSG)$ is flat over $\ZSG$.

By Theorem 1 in \cite[Ch. 1, \S 10]{MW}, the subgroup $C$ has finite index in
$\cO^\times$. It follows that the annihilator of $C_{(S)}$ in $\ZSG$ is generated
by $\sum_{g\in G}g\in \ZSG$. The assertion of the proposition is therefore
equivalent to the statement that for every non-trivial $\chi\in\hat{G}$, the
module $\chi(\ZSG)\otimes_{\ZSG} C_{(S)}$ is free of rank $1$ over $\chi(\ZSG)$, which we
will now prove.

First, suppose that $\chi\in \hat{G}$ is faithful, non-trivial. In particular,
$G$ is non-trivial, cyclic. Then for all $k\subsetneq F$ distinct from $\Q$, we have
$\chi(\ZSG)\otimes_\ZSG \ZSG\alpha_k=0$, since all elements of
$\ZSG\alpha_k$ are fixed by a proper subgroup of $G$, while $\chi$ is faithful.
Since the image of $\Norm_{F/\Q}\in \ZSG$ in the quotient $\chi(\ZSG)$ is $0$, we have
\begin{eqnarray*}
 \chi(\ZSG)\otimes_\ZSG C_{(S)} & = &\chi(\ZSG)\otimes_\ZSG \ker(\Norm_{F/\Q}\colon D_{(S)} \to D_{(S)})\\
&=& \chi(\ZSG)\otimes_\ZSG D_{(S)} = \chi(\ZSG)\otimes_\ZSG \ZSG\alpha_F,
\end{eqnarray*}
so $\chi(\ZSG)\otimes_\ZSG C_{(S)}$ is cyclic.

Now, we deduce the general case. Let $\chi\in \hat{G}$ be arbitrary
non-trivial, and let $F'\subset F$ be the fixed field of $\ker \chi$.
Temporarily write $C_{F,(S)}=C_{(S)}$, and let $C_{F',(S)}=\Z_{(S)}\otimes_{\Z}C_{F'}$,
where $C_{F'}$ is the analogously defined group of cyclotomic units of $F'$.
The image of the element $\Norm_{F/F'}=\sum_{g\in \ker \chi}g$ of $\ZSG$ in the
quotient $\chi(\ZSG)$ is $[F:F']$, which is invertible in $\chi(\ZSG)$. It follows that
$\chi(\ZSG)\otimes_\ZSG C_{F,(S)}=\chi(\ZSG)\otimes_\ZSG\Norm_{F/F'}(C_{F,(S)})$. Moreover, it
follows from a direct calculation, which we leave to the reader, that
$\Norm_{F/F'}C_{F,(S)}\subset C_{F',(S)}$,
and in fact, we have equality, since $\Norm_{F/F'}$ acts as $[F:F']$, and thus
invertibly, on $C_{F',(S)}\subset C_{F,(S)}$. In summary, we have
$\chi(\ZSG)\otimes_\ZSG C_{F,(S)}= \chi(\ZSG)\otimes_\ZSG C_{F',(S)}$. The assertion therefore
follows from the special case proved above, applied to $F'$ in place of $F$.
\end{proof}\noindent
The map $G\to G$ given by $g\mapsto g^{-1}$ for all $g\in G$ extends
$\Z_{(S)}$-linearly to an
isomorphism between $\ZSG$ and its opposite ring $\ZSG^{\opp}$. If $M$ is any $\ZSG$-module,
then the above isomorphism makes $\Hom(M,\Z_{(S)})$ and $\Hom(M,\Q/\Z)$ into
$\ZSG$-modules, which we denote by $M^*$ and $M^\lor$, respectively.

We can now prove Theorem \ref{thm:introrealabelian} from the introduction.
Let us recall the statement.
\begin{theorem}\label{thm:realabelian}
Let $F/\Q$ be a finite real abelian extension, let $G$ be its Galois group, let $S$ be
a set of prime numbers not dividing $2\cdot \#G$, and let $\ZSG=\Z_{(S)}[G]$.
Then we have the equality $[\ZSG\otimes_{\Z[G]}\Ar_F] = [\ZSG]-[\Z_{(S)}]$ in $\G(\ZSG)$,
where $[\Z_{(S)}]$ denotes the class of $\Z_{(S)}$ with the trivial $G$-action.
\end{theorem}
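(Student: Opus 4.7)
The plan is to apply $\ZSG\otimes_{\Z[G]}-$ to the exact sequence (\ref{eq:DualArakelov}), compute the contribution of the torsion-free quotient explicitly from the cyclotomic units via Proposition \ref{prop:cycunits}, and reduce what remains to a character-by-character Iwasawa comparison supplied by Mazur--Wiles.

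First I would apply $\ZSG\otimes_{\Z[G]}-$, which is exact because $\Z_{(S)}$ is $\Z$-flat, to (\ref{eq:DualArakelov}) to get
\begin{equation*}
[\ZSG\otimes_{\Z[G]}\Ar_F]=[\ZSG\otimes_{\Z[G]}\Cl_F^\lor]+[(\ZSG\otimes_{\Z[G]}\cO_F^\times)^*]
\end{equation*}
in $\G(\ZSG)$. Write $U=\ZSG\otimes_{\Z[G]}\cO_F^\times$ and $C'=\ZSG\otimes_{\Z[G]}C$; since $F$ is totally real and $2\notin S$, the torsion in $\cO_F^\times$ is killed and $U$ is $\Z_{(S)}$-free of finite rank.

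For the unit side, Proposition \ref{prop:cycunits} gives $C'\cong\ZSG/(\Norm)$ where $\Norm=\sum_{g\in G}g$, and the short exact sequence $0\to\Z_{(S)}\xrightarrow{\cdot\Norm}\ZSG\to\ZSG/(\Norm)\to 0$ (with trivial $G$-action on $\Z_{(S)}$) yields $[C']=[\ZSG]-[\Z_{(S)}]$ in $\G(\ZSG)$. Dualizing the sequence $0\to C'\to U\to U/C'\to 0$ with $\Hom(-,\Z_{(S)})$, and using that $U/C'$ is finite (so $\Hom(U/C',\Z_{(S)})=0$ and $\Ext^1(U/C',\Z_{(S)})\cong(U/C')^\lor$) while $U$ is $\Z_{(S)}$-free (so $\Ext^1(U,\Z_{(S)})=0$), produces an exact sequence $0\to U^*\to(C')^*\to(U/C')^\lor\to 0$. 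Under the natural $\ZSG$-linear self-duality $\ZSG^*\cong\ZSG$ identifying dual basis with basis, the inclusion $(C')^*\hookrightarrow\ZSG^*$ is identified with the inclusion of the augmentation ideal $I\subset\ZSG$, so $[(C')^*]=[I]=[\ZSG]-[\Z_{(S)}]$ and hence $[U^*]=[\ZSG]-[\Z_{(S)}]-[(U/C')^\lor]$.

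Combining the above gives
\begin{equation*}
[\ZSG\otimes_{\Z[G]}\Ar_F]-\bigl([\ZSG]-[\Z_{(S)}]\bigr)=[\ZSG\otimes_{\Z[G]}\Cl_F^\lor]-[(U/C')^\lor].
\end{equation*}
Since $M\mapsto M^\lor$ is an exact contravariant involution on finite $\ZSG$-modules, it induces a bijection on the torsion subgroup of $\G(\ZSG)$, so the theorem reduces to the single identity $[\ZSG\otimes_{\Z[G]}\Cl_F]=[U/C']$ in $\G(\ZSG)$. On the trivial character component this is automatic because $\Cl_{\Z_{(S)}}=0$; on each non-trivial $\chi$-component, where $\chi(\ZSG)$ is a Dedekind domain, the claim is that the finite modules $\chi(\ZSG)\otimes_{\Z[G]}\Cl_F$ and $\chi(\ZSG)\otimes_{\Z[G]}(\cO_F^\times/C)$ share a Jordan--H\"older series. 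This is the analogue for real abelian fields of what Proposition \ref{prop:D} provided in the imaginary case, and is the main obstacle: it must be extracted from the Iwasawa Main Conjecture for real abelian fields, again proved by Mazur--Wiles \cite{MW}. In contrast with the imaginary case no exceptional prime contribution appears, because $\mu_F=\{\pm 1\}$ is annihilated by the invertibility of $2$ in $\Z_{(S)}$.
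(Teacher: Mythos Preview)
Your proposal is correct and follows essentially the same route as the paper: decompose $[\ZSG\otimes_{\Z[G]}\Ar_F]$ via the exact sequence (\ref{eq:DualArakelov}), handle the unit contribution through Proposition \ref{prop:cycunits}, and invoke Mazur--Wiles for the comparison $[\ZSG\otimes_{\Z[G]}\Cl_F]=[\cO_{(S)}^\times/C_{(S)}]$. The paper's writeup is more compressed in two respects: first, the comparison you call ``the main obstacle'' is not something to be extracted from the Main Conjecture but is quoted directly as Theorem 1 of \cite[Ch.~1, \S 10]{MW}; second, once that identity is fed into the dual exact sequence, everything collapses in one line to $[\ZSG\otimes_{\Z[G]}\Ar_F]=[(C_{(S)})^*]$, so the augmentation-ideal bookkeeping you carry out is subsumed in the single observation that Proposition \ref{prop:cycunits} gives $[(C_{(S)})^*]=[\ZSG]-[\Z_{(S)}]$.
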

\begin{proof}
Recall from the exact sequence (\ref{eq:DualArakelov}) that we have the equality
$$
[\ZSG\otimes_{\Z[G]} \Ar_F] = [(\cO_{(S)}^\times)^*]
+[(\ZSG\otimes_{\Z[G]}\Cl_F)^\lor]
$$ in $\G(\ZSG)$. By Theorem 1 in \cite[Ch. 1, \S 10]{MW}, we
have $[\cO_{(S)}^\times/C_{(S)}]= [\ZSG\otimes_{\Z[G]} \Cl_F]$, or dually,
$$
[(C_{(S)})^*/(\cO_{(S)}^\times)^*]=
[(\ZSG\otimes_{\Z[G]}\Cl_F)^\lor],
$$
whence $[\ZSG\otimes_{\Z[G]} \Ar_F] = [(C_{(S)})^*]$. The theorem therefore
follows from Proposition \ref{prop:cycunits}.
\end{proof}\noindent
Theorem \ref{thm:realabelian} expresses that the class of the
$\ZSG$-module $\ZSG\otimes_{\Z[G]}\Ar_F$ in $\G(\ZSG)$ is
``as simple as it can be'' given the $\Q[G]$-module structure of
$\Q[G]\otimes_{\Z[G]}\Ar_F$. Indeed, as the discussion at the beginning of
Section \ref{sec:imagab} shows, the class $[\ZSG\otimes_{\Z[G]}\Ar_F]$ in $\G(\ZSG)$ is determined
by its images under the projection $\G(\ZSG)\to \G(\ZSG)/\G(\ZSG)_{\tors}$ and under the
canonical splitting $\G(\ZSG)\to \G(\ZSG)_{\tors}$; the former is determined
by the $\Q[G]$-module structure of $\Q[G]\otimes_{\Z[G]}\Ar_F$, and the theorem
implies that the latter is $0$.

Theorems \ref{thm:realabelian} and \ref{thm:imagabelian} can be elegantly combined
into one statement, using the so-called oriented Arakelov class group of
a number field $F$, defined by Schoof in \cite{Schoof}. To explain this, we
will briefly recall the definition and the most salient properties of the
oriented Arakelov class group, and refer the reader to \cite{Schoof} for details.

Let $F/\Q$ be a finite extension, let $\Id_F$ be the group of fractional ideals of $\cO_F$,
and let $F_{\R}$ denote the $\R$-algebra $\R\otimes_{\Q}F$. The maximal compact
subgroup $\comp(F_{\R}^\times)$ of $F_{\R}^\times$ is isomorphic to
$\{ \pm 1 \}^{\fr}\times \{z\in \C^{\times} : |z|=1\}^{\fc}$, where $\fr$ and
$\fc$ denote the set of real, respectively complex places of $F$. It contains
the group $\mu_F$ of roots of unity of $F$ as a subgroup. We have canonical maps
$\Id_F\to \R_{>0}$ and $F_{\R}^\times\to \R_{>0}$, the first given by the ideal
norm, and the second given by the absolute value of the $\R$-algebra norm. Let
$\Id_F\times_{\R_{>0}} F_{\R}^\times$ be the fibre product with respect to these
maps. The \emph{oriented Arakelov class group} $\widetilde{\Pic}^0_F$ of $F$ is
defined as the cokernel of the map $F^\times\to \Id_F\times_{\R_{>0}} F_{\R}^\times$
that sends $\alpha\in F^\times$ to $(\alpha\cO_F,\alpha)$. It is a compact abelian
$\Aut_F$-module, whose dual $\Hom(\widetilde{\Pic}^0_F,\R/\Z)$ will be denoted by
$\widetilde{\Ar}_F$. One has an exact sequence of finitely generated discrete $\Aut_F$-modules
$$
0 \to \Ar_F \to \widetilde{\Ar}_F \to \Hom(\comp(F_{\R}^\times)/\mu_F,\R/\Z) \to 0.
$$
Let $K$ be an algebraic number field, let $F/K$ be a finite Galois extension,
let $G$ be its Galois group, let $S$ is any set of odd prime numbers, let
$\Sigma$ be the set of infinite places of $K$, and for $v\in \Sigma$
let $I_v\subset G$ denote an inertia subgroup at $v$.
Then one has an isomorphism of $\Z_{(S)}[G]$-modules
$$
\Z_{(S)}\otimes_{\Z}\Hom(\comp(F_{\R}^\times),\R/\Z) \cong \bigoplus_{v\in \Sigma}\Ind_{G/I_v}\tau_v,
$$
where $\Ind_{G/I_v}$ denotes the induction from $I_v$ to $G$, and
$$
\tau_v = \leftchoice{\Z_{(S)}[I_v]/\langle \sum_{g\in I_v}g\rangle}{v\text{ is real;}}
{\Z_{(S)}}{v\text{ is complex}.}
$$
Let $d$ be the degree of $K$ over $\Q$. By combining the above observation with
the exact sequence (\ref{eq:DualArakelov}), we deduce the equalities
$$
[\widetilde{\Ar}_{F,(S)}] - [\Ar_{F,(S)}] = d\cdot [\Z_{(S)}[G]] - \sum_{v\in \Sigma}[\Z_{(S)}[G/I_v]] - [\mu_{F,(S)}^\lor],
$$
and hence
\begin{eqnarray}\label{eq:ArTilde}
\lefteqn{[\widetilde{\Ar}_{F,(S)}] =}\\
& & d\cdot [\Z_{(S)}[G]] - \sum_{v\in \Sigma}[\Z_{(S)}[G/I_v]]+
[(\cO^\times_{F,(S)})^*]+[\Cl_{F,(S)}^\lor] - [\mu_{F,(S)}^\lor]\nonumber
\end{eqnarray}
in $\G(\Z_{(S)}[G])$, where $\Z_{(S)}[G/I_v]$ denotes the permutation module
with a $\Z_{(S)}$-basis given by the set of cosets $G/I_v$, and with $G$ acting
by left multiplication. Note that this equality
holds for \emph{all} finite Galois extensions, not just abelian ones.
From this equality, together with Theorems \ref{thm:realabelian} and
\ref{thm:imagabelian}, one easily deduces the following result.
\begin{theorem}\label{thm:allabelian}
Let $F/\Q$ be a finite abelian extension, let $G$ be its Galois group, let $S$
be a set of prime numbers not dividing $2\cdot \#G$, and let $\ZSG=\Z_{(S)}[G]$.
Then we have the equality
$[\ZSG\otimes_{\Z[G]} \widetilde{\Ar}_F] = [\ZSG]-[\Z_{(S)}]$ in $\G(\ZSG)$.
\end{theorem}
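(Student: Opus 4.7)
The plan is to specialize equation~(\ref{eq:ArTilde}) to $K=\Q$, where $d=1$ and $\Sigma$ consists of the single real infinite place $v_\infty$; its inertia $I_{v_\infty}\subset G$ is trivial if $F$ is totally real, and equals $\langle c\rangle$ (with $c$ complex conjugation) if $F$ is imaginary. I will treat the two cases separately.

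In the real case, $\Z_{(S)}[G/I_{v_\infty}]=\ZSG$, and since $\mu_F=\{\pm 1\}$ and $2\in\Z_{(S)}^\times$, one has $\mu^\lor_{F,(S)}=0$. Combining~(\ref{eq:ArTilde}) with~(\ref{eq:DualArakelov}) then collapses to $[\widetilde{\Ar}_{F,(S)}]=[\ZSG\otimes_{\Z[G]}\Ar_F]$, and Theorem~\ref{thm:realabelian} immediately gives $[\widetilde{\Ar}_{F,(S)}]=[\ZSG]-[\Z_{(S)}]$.

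In the imaginary case, $\Z_{(S)}[G/I_{v_\infty}]=\ZSG/(1-c)$, which I denote by $\ZSG^+$. I decompose both sides of~(\ref{eq:ArTilde}) under the ring splitting $\ZSG=\ZSG^+\oplus\ZSG^-$ afforded by the orthogonal idempotents $(1\pm c)/2$, and verify the resulting identity in each of the two summands of $\G(\ZSG)=\G(\ZSG^+)\oplus\G(\ZSG^-)$. Let $F^+$ denote the maximal totally real subfield of $F$, so that $\ZSG^+$ coincides with $\Z_{(S)}[\Gal(F^+/\Q)]$. On the $+$-side, the classical CM-field facts (all valid once $2$ is inverted) that $\cO^\times_{F,(S)}=\cO^\times_{F^+,(S)}\oplus\mu_{F,(S)}$, with $c$ acting trivially on the first summand and as $-1$ on the second, that the natural map identifies $\Cl_{F^+,(S)}$ with $(\Cl_{F,(S)})^+$, and that $(\mu^\lor_{F,(S)})^+=0$, collapse the $+$-part of the right-hand side of~(\ref{eq:ArTilde}) through~(\ref{eq:DualArakelov}) applied to $F^+$ to $[\ZSG^+\otimes_{\Z[\Gal(F^+/\Q)]}\Ar_{F^+}]$; Theorem~\ref{thm:realabelian} for $F^+$ then identifies this with $[\ZSG^+]-[\Z_{(S)}]$, matching the $+$-part of the target identity.

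For the $-$-side, $(\cO^\times_{F,(S)})^-=\mu_{F,(S)}$ is $\Z_{(S)}$-torsion so $((\cO^\times_{F,(S)})^*)^-=0$, and $[\Z_{(S)}]^-=0$ since $c$ acts trivially on $\Z_{(S)}$ and $2$ is a unit. The needed equality reduces to $[(\Cl^\lor_{F,(S)})^-]=[\mu^\lor_{F,(S)}]$. I derive this from Theorem~\ref{thm:imagabelian} via two observations: first, Pontryagin dualization $M\mapsto M^\lor$ is an exact contravariant functor on finite $\ZSG^-$-modules (because $\Q/\Z$ is $\Z$-injective), hence induces a well-defined endomorphism of $\G(\ZSG^-)_{\tors}$ sending $[M]$ to $[M^\lor]$; second, the definition of the $\ZSG$-action on $M^\lor$ via the anti-involution $g\mapsto g^{-1}$ yields a canonical isomorphism $(M^\pm)^\lor\cong(M^\lor)^\pm$ for every finite $\ZSG$-module $M$. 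Applying these to the equality $[\Cl^-_{F,(S)}]=[\mu_{F,(S)}]$ of Theorem~\ref{thm:imagabelian} produces $[(\Cl^\lor_{F,(S)})^-]=[\mu^\lor_{F,(S)}]$. The main care in the whole argument is this $\pm$-decomposition bookkeeping in the imaginary case, particularly verifying the identifications of $+$-parts with data from $F^+$ and the compatibility of $(-)^\lor$ with the $\pm$-splitting; no single step is conceptually difficult.
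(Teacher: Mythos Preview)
Your proposal is correct and follows exactly the route the paper indicates: the paper states only that the theorem ``easily'' follows from equation~(\ref{eq:ArTilde}) together with Theorems~\ref{thm:realabelian} and~\ref{thm:imagabelian}, and you have carried out precisely this deduction, splitting into the real and imaginary cases and, in the latter, using the $\pm$-decomposition of $\G(\ZSG)$. One small remark: your claim that $M\mapsto M^\lor$ gives a well-defined endomorphism of $\G(\ZSG^-)_{\tors}$ needs slightly more than exactness, since $\G(\ZSG^-)_{\tors}$ is only a \emph{quotient} of the Grothendieck group of finite $\ZSG^-$-modules; the cleanest justification is that for finite $M$ one has $[M^\lor]=[M^\sigma]$ in $\G(\ZSG^-)$, where $M^\sigma$ is $M$ with action twisted by $g\mapsto g^{-1}$, and $M\mapsto M^\sigma$ visibly extends to an exact autoequivalence of all finitely generated $\ZSG^-$-modules.
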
\noindent
The theorem suggests the following question.
\begin{question}\label{que}
Let $K$ be an algebraic number field, let $d$ be its degree over $\Q$, let
$F/K$ be a finite Galois extension, let $G$ be its Galois group, let
$S$ be a set of prime numbers not dividing $2\cdot \#G$, and let $\ZSG=\Z_{(S)}[G]$.
Is the class of $\widetilde{\Ar}_{F,(S)}$ in $\G(\ZSG)$ equal to $d\cdot[\ZSG]-[\Z_{(S)}]$?
\end{question}\noindent
%
%
Equation (\ref{eq:ArTilde}) shows that the answer to Question \ref{que} is
affirmative when the natural map $\G(\ZSG) \to \G(\Q\otimes_{\Z_{(S)}}\ZSG)$ is
injective, which is the case for example when $S$ is finite, as can be deduced
from Lemma \ref{lem:uniqueproj} and \cite[Theorem (21.4)]{MaxOrders}.


\section{Enumerating number fields}\label{sec:quartics}\noindent
In this section, we give our second disproof of the Cohen--Lenstra--Martinet heuristics.
We begin by proving Theorem \ref{thm:quartics} from the introduction,
and then compare its consequences with the predictions of Heuristic \ref{he:CLM}.

Our disproof suggests that the discriminant is not a good invariant to use for
purposes of arithmetic statistics, and we investigate alternatives.

Let $\bar{\Q}$ be a fixed algebraic closure of $\Q$, and let $\cC(x)$ be the
set of all fields $F\subset \bar{\Q}$ that are Galois over $\Q$ with cyclic Galois group
of order $4$ and whose discriminant is at most $x$. If $\quadsub\subset \bar{\Q}$ is a
quadratic field, let $\cC_{\quadsub}(x)=\{F\in \cC(x): \quadsub\subset F\}$.
If $n$ is a positive integer, let $\numdiv(n)$ denote the number of positive divisors of $n$.
The following result is a more precise version of Theorem \ref{thm:quartics} from
the introduction.
\begin{theorem}\label{thm:proportionquad}
Define
$$
\tempfac=\frac{24+\sqrt{2}}{24}\cdot\prod_{p\equiv 1\!\!\!\!\pmod 4}\left(1+\frac{2}{(p+1)\sqrt{p}}\right)-1,
$$
where $p$ ranges over primes.
If $\quadsub\subset \bar{\Q}$ is a quadratic field, and $d$ is its discriminant,
then define $p_{\quadsub}$ by
\begin{enumerate}[leftmargin=*,label={\upshape(\roman*)}]
  \item\label{item:pkzero} $p_{\quadsub}=0$ if $d<0$ or $d$ has at least one prime divisor that is congruent to
    $3\pmod 4$,
  \item $p_{\quadsub} = \frac{\numdiv(d)}{\tempfac\cdot\prod_{p\mid d}(p+1)\sqrt{p}}$
    if $d>0$, all odd prime divisors of $d$ are congruent to $1\pmod{4}$, and $d$ is even,
    where the product runs over all prime divisors of $d$,
  \item $p_{\quadsub}=\frac{\numdiv(d)}{16\tempfac\cdot\prod_{p\mid d}(p+1)\sqrt{p}}$
    if $d>0$ and all prime divisors of $d$ are congruent to $1\pmod{4}$,
    where the product runs over all prime divisors of $d$.
\end{enumerate}
Then:
\begin{enumerate}[leftmargin=*,label={\upshape(\alph*)}]
  \item\label{item:emptyquart} in case \ref{item:pkzero} the set
    $\cC_{\quadsub}(x)$ is empty for all real numbers $x$;
\item\label{item:subfieldprop} in the other two cases, the limit
$$
\lim_{x\to \infty} \frac{\#\cC_{\quadsub}(x)}{\#\cC(x)}
$$
exists, and is equal to $p_{\quadsub}$;
\item\label{item:sumpk} one has $\sum_{\quadsub}p_{\quadsub}=1$, where the sum
  runs over all quadratic fields in $\bar{\Q}$.
\end{enumerate}
\end{theorem}
\begin{proof}
First we prove part \ref{item:emptyquart}. If $\quadsub$ is a quadratic field, and
$F$ is a cyclic quartic field
containing $\quadsub$, then any place of $\Q$ that ramifies in $\quadsub$ must
be totally ramified in $F$, so $\quadsub$ must be real, and only primes that are congruent
to $1$ or $2\pmod 4$ can ramify in $\quadsub$. This proves the first assertion.

Now we prove part \ref{item:subfieldprop}. Let $k$ be a quadratic field, let
$d$ be its discriminant, and assume that $d$ is positive and not
divisible by any primes that are congruent to $3\pmod 4$. We will use the
estimates of \cite{OuWilliams}. Write $d=2^\beta d'$, where $\beta\in \{0,3\}$,
and $d'$ is a product of distinct prime numbers that are congruent to $1\pmod 4$.
Then the discriminant of any cyclic quartic field containing $k$ is of the form
$n=2^\alpha d'^3a^2$, where $a$ is an odd square-free positive integer that is
coprime to $d'$, and $\alpha=11$ if $\beta=3$, and $\alpha\in \{0,4,6\}$ if
$\beta=0$ (see \cite{OuWilliams}). Note that $k=\Q(\sqrt{n})$, so the
discriminant of a cyclic quartic field determines its unique quadratic subfield.
Let $h(n)$ be the number of cyclic quartic fields inside $\bar{\Q}$
of discriminant $n$. By \cite[equation (3.3)]{OuWilliams} we have
$$
h(n)=\left\{\!\!\begin{array}{ll}
2\numdiv(d'):\!\!\! & \alpha=11,\\
\numdiv(d'):\!\!\! & \alpha=6,\\
\frac{1}{2}\numdiv(d'):\!\!\! & \alpha=0\text{ or }4.
\end{array}\right.
$$
For a real number $x$, let $\sqsum(x)$ denote the number of
square-free positive integers that are at most $x$ and coprime to $2d$.
It follows from the above discussion, that if $d$ is odd, then
\begin{eqnarray*}
\#\cC_{\quadsub}(x)
=\!\! \sum_{a\leq \sqrt{x/d^3}}\!\!h(d^3a^2)
+\!\!\! \sum_{a\leq \sqrt{x/(2^4d^3)}}\!\!\!h(2^4d^3a^2)+
\sum_{a\leq \sqrt{x/(2^6d^3)}}\!\!\!h(2^6d^3a^2)\\
= \numdiv(d)\left(\tfrac{1}{2}\sqsum\bigl(\sqrt{x/d^3}\bigr)+
\tfrac{1}{2}\sqsum\bigl(\sqrt{x/(2^4d^3)}\bigr)+
\sqsum\bigl(\sqrt{x/(2^6d^3)}\bigr)\right),\nonumber
\end{eqnarray*}
where the sums run over square-free positive integers $a$ that are coprime to $2d$;
while if $d$ is even, then
\begin{eqnarray*}
\#\cC_{\quadsub}(x)
= \sum_{a\leq \sqrt{x/(2^{11}d'^3)}}\!\!\!h(2^{11}d'^3a^2)
= 2\numdiv(d')\sqsum\bigl(\sqrt{x/(2^{11}d'^3)}\bigr),
\end{eqnarray*}
with the sum again running over square-free positive integers $a$ that are coprime
to $2d$. A standard estimate shows that for a positive integer $m$, we have
$$
\sqsum(x)=\frac{6x}{\pi^2}\prod_{p|2d}\frac{p}{p+1}+O(x^{1/2}),
$$
where $p$ ranges over primes. Combining this
estimate with the above formulae for $\#\cC_k(x)$ yields
\begin{eqnarray*}
\#\cC_{\quadsub}(x) & = & \numdiv(d)\frac{3x^{1/2}}{d^{3/2}\pi^2}\prod_{p|d}\frac{p}{p+1}+O(x^{1/4})\\
& = & \numdiv(d)\frac{3x^{1/2}}{\pi^2}\prod_{p|d}\frac{1}{(p+1)\sqrt{p}}+O(x^{1/4})
\end{eqnarray*}
if $d$ is odd, and
\begin{eqnarray*}
\#\cC_{\quadsub}(x) & = & \numdiv(d')\frac{x^{1/2}}{2^{5/2}d'^{3/2}\pi^2}\prod_{p|d'}\frac{p}{p+1}+O(x^{1/4})\\
& = & \numdiv(d')\frac{x^{1/2}}{2^{5/2}\pi^2}\prod_{p|d'}\frac{1}{(p+1)\sqrt{p}}+O(x^{1/4})
\end{eqnarray*}
if $d$ is even, where again $p$ ranges over primes.
On the other hand, by \cite{OuWilliams} we have
$$
\#\cC(x) = \frac{3}{\pi^2}tx^{1/2}+O(x^{1/3}(\log x)^3),
$$
whence part \ref{item:subfieldprop} of the theorem follows.

The last part is readily verified by an easy direct computation, and we leave
this to the reader.
\end{proof}\noindent
\begin{theorem}\label{thm:quarticsbody}
Let $\cC(x)$ be the
set of cyclic quartic fields inside $\bar{\Q}$ with discriminant at most $x$,
and let $\cC'(x)\subset \cC(x)$ be the subset of those for
which the class number of the quadratic subfield is not divisible by $3$. Then
the limit $\lim_{x\to \infty}\#\cC'(x)\big/\#\cC(x)$ exists, and one has
$$
\lim_{x\to \infty}\#\cC'(x)\big/\#\cC(x)\approx 0.9914.
$$
\end{theorem}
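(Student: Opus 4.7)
The plan is to reduce Theorem \ref{thm:quarticsbody} to Theorem \ref{thm:proportionquad} by decomposing $\cC(x)$ according to the quadratic subfield and then evaluating the resulting absolutely convergent series numerically.

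First, every $F \in \cC(x)$ contains a unique real quadratic subfield $k \subset \bar{\Q}$, so there is a disjoint decomposition $\cC(x) = \bigsqcup_k \cC_k(x)$ where $k$ ranges over real quadratic subfields of $\bar{\Q}$, and consequently $\cC'(x) = \bigsqcup_{k:\, 3\nmid h_k} \cC_k(x)$, where $h_k$ denotes the class number of $k$. Dividing by $\#\cC(x)$ and passing to the limit term by term yields
$$
\lim_{x\to\infty}\frac{\#\cC'(x)}{\#\cC(x)} = \sum_{k:\, 3\nmid h_k}\lim_{x\to\infty}\frac{\#\cC_k(x)}{\#\cC(x)},
$$
provided the limit and sum can be interchanged.

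To justify this interchange I would use dominated convergence, with the dominating function coming from the fact that the sum of the limiting densities over \emph{all} admissible $k$ (without the class-number condition) equals $1$. This total can be verified by summing the explicit density formulas from Theorem \ref{thm:proportionquad} over all positive fundamental discriminants $d$ whose prime factors lie in $\{2\} \cup \{p : p \equiv 1 \pmod 4\}$, and recognising the resulting Dirichlet series as precisely the Euler product appearing in the definition of $\tempfac$. Combined with the obvious bound $\#\cC_k(x)/\#\cC(x) \leq 1$ for every $x$ and $k$, this supplies the dominating sequence required.

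The limit therefore equals the absolutely convergent series
$$
\sum_d \frac{\numdiv(d)\cdot \epsilon(d)}{\tempfac \cdot \prod_{p\mid d}(p+1)\sqrt{p}},
$$
summed over those positive fundamental discriminants $d$ with no prime factor $\equiv 3 \pmod 4$ and satisfying $3\nmid h(\Q(\sqrt{d}))$, where $\epsilon(d)=1$ for odd $d$ and $\epsilon(d)=1/16$ for even $d$. The main obstacle is rigorous numerical evaluation: because of the factor $\prod_{p\mid d}(p+1)\sqrt{p}$ in the denominator, the terms decay rapidly in $d$, so summing over $d$ up to some moderate cutoff $Y$ while computing $h(\Q(\sqrt{d}))\pmod 3$ for each $d$ by standard algorithms produces an approximation to $0.9914$. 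One must then bound $\sum_{d>Y}\numdiv(d)\epsilon(d)/\prod_{p\mid d}(p+1)\sqrt{p}$ effectively below the desired precision, which follows from the Euler product convergence underlying $\tempfac$ together with elementary estimates on $\numdiv(d)$.
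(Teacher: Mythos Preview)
Your overall strategy matches the paper's: decompose $\cC(x)$ by quadratic subfield, invoke Theorem \ref{thm:proportionquad} for each $\lim_{x\to\infty}p_k(x)$, interchange the limit with the sum over $k$, and then evaluate the resulting series numerically (the paper sums over admissible $d<3.1\times 10^9$, splitting according to whether $3\mid h_k$ or not, to get both a lower and an upper bound).

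The gap is in your justification of the interchange. Dominated convergence requires a function $g(k)$ with $p_k(x)\le g(k)$ for all $x$ and $\sum_k g(k)<\infty$. Neither ingredient you name does this: the fact that $\sum_k \lim_{x\to\infty}p_k(x)=1$ says only that the \emph{limits} are summable, not that they dominate the $p_k(x)$; and the bound $p_k(x)\le 1$ is not summable over $k$. So as stated you have no dominating sequence. The paper instead uses the Fatou-type argument (equivalently Scheff\'e's lemma): Fatou for counting measure gives $\liminf_x\sum_{k\in\cK}p_k(x)\ge\sum_{k\in\cK}\lim_x p_k(x)$ for any set $\cK$ of quadratic fields; applying this also to the complement of $\cK$ and using that $\sum_k p_k(x)=1$ for every $x$ together with your (correct and essential) observation that $\sum_k\lim_x p_k(x)=1$ yields the matching $\limsup$ inequality. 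That is the step you should substitute for ``dominated convergence''; the rest of your outline then goes through.
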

\begin{proof}
For a quadratic field $\quadsub$ and a positive real number $x$, let
$p_{\quadsub}(x)=\#\cC_{\quadsub}(x)/\#\cC(x)$. Then by Theorem \ref{thm:proportionquad}
one has $\lim_{x\to \infty}p_{\quadsub}(x)=p_{\quadsub}$, where $p_{\quadsub}$
is defined as in that theorem.

Let $\cK$ be any set of real quadratic fields. Then we claim that
\begin{eqnarray}\label{eq:Fatou}
\lim_{x\to \infty}\sum_{\quadsub\in \cK}p_{\quadsub}(x)=\sum_{\quadsub\in \cK}\lim_{x\to \infty}p_{\quadsub}(x).
\end{eqnarray}
This is clear when $\cK$ is finite. In general, for every
finite $\cK'\subset \cK$ we have
$$
\liminf_{x\to \infty}\sum_{\quadsub\in \cK}p_{\quadsub}(x)\geq
\liminf_{x\to \infty}\sum_{\quadsub\in \cK'}p_{\quadsub}(x),
$$
so applying identity (\ref{eq:Fatou}) to finite subsets of $\cK$ we obtain
\begin{eqnarray}\label{eq:liminf}
\liminf_{x\to \infty}\sum_{\quadsub\in \cK}p_{\quadsub}(x)\geq
\sup_{\genfrac{}{}{0pt}{}{\cK'\subset \cK}{\text{finite}}}
\sum_{\quadsub\in \cK'}\lim_{x\to \infty}p_{\quadsub}(x)=
\sum_{\quadsub\in \cK}\lim_{x\to \infty}p_{\quadsub}(x),
\end{eqnarray}
where the last equality follows from the fact that all the summands are non-negative.
By Theorem \ref{thm:proportionquad}\ref{item:sumpk} one has
$\sum_{\quadsub}\lim_{x\to \infty}p_k(x)=1$, where the sum runs over all real
quadratic fields. By combining this with inequality (\ref{eq:liminf}) applied to
the complement of $\cK$ in the set of all real quadratic fields in place of $\cK$,
one deduces that $\limsup_{x\to \infty}\sum_{\quadsub\in \cK}p_{\quadsub}(x) \leq
\sum_{\quadsub\in \cK}\lim_{x\to \infty}p_{\quadsub}(x)$, whence the claimed
equality (\ref{eq:Fatou}) follows.

When applied to the set $\cK$ of real quadratic fields whose class number is
not divisible by $3$, this shows that the limit $\lim_{x\to \infty}\#\cC'(x)\big/\#\cC(x)$
exists. Moreover, by summing $p_{\quadsub}$ from Theorem
\ref{thm:proportionquad} over those real
quadratic fields of discriminant less than $3.1\times 10^9$ whose class number is,
respectively is not, divisible by $3$ one obtains sufficiently tight upper and
lower bounds on that limit to obtain the estimate in the theorem.
\end{proof}\noindent
\begin{remark}\label{rmrk:cycquartic}
If, in Heuristic \ref{he:CLM}, we take
$G=\langle g | g^4=\id\rangle$, $A=\Q[G]/(1+g)\cong \Q$, $S=\{3\}$ (so that in
particular $R\cong\Z_{(3)}$), the module $P$ to be free of rank $1$ over $R$, and
$$
f\colon M\mapsto \leftchoice{0}{3\mid \#M;}{1}{3\nmid \#M,}
$$
then the value of
(\ref{eq:average}) is nothing but the limit $\lim_{x\to \infty}\#\cC'(x)/\cC(x)$
referred to in Theorem \ref{thm:quarticsbody}. The same argument
as in the proof of the theorem shows that, more generally,
if one chooses $A$ and $G$ as just described, $S$ to be any set of odd primes,
the projective module $P$ to have rank $1$ over $R$, and $f$ to be any computable
bounded $\C$-valued function on $\cM$, then the limit (\ref{eq:average}) exists
and can be computed to any desired precision.
\end{remark}\noindent
Theorem \ref{thm:quarticsbody} contradicts the predictions of Heuristic \ref{he:CLM},
as we will now explain. As just remarked, the limit $\lim_{x\to \infty}\#\cC'(x)/\cC(x)$
of Theorem \ref{thm:quarticsbody} is equal to the value of (\ref{eq:average})
for suitable choices of $G$, $A$, $S$, $P$, and $f$. The value of (\ref{eq:expectation}), on
the other hand, with these choices is the same as with a
different choice, namely as with $G'=\langle h|h^2=\id\rangle$,
$A'=\Q[G']/(1+h)\cong A$, and $S'=S$, $P'=P$, and $f'=f$. With that latter
choice, the value of (\ref{eq:expectation}) was computed in \cite[(1.2)(b)]{CM2}
to be $\approx 0.8402$, and thus not equal to the value of (\ref{eq:average}),
which is given by Theorem \ref{thm:quarticsbody}. This completes our disproof
of Heuristic \ref{he:CLM}.

The above disproof relies on the observation that
enumerating number fields by non-decreasing discriminant has the undesirable
feature that certain fields may appear with positive probability as intermediate
fields, so that Heuristic \ref{he:CLM} for those number fields clashes with
that for the intermediate fields. We conjecture that instead enumerating fields
by the ideal
norm $c_{F/K}$ of the product of the primes of $\cO_K$ that ramify in $F/K$ does
not exhibit this feature. The following is a
result in support of this conjecture. Recall that if $\cF$ is a set of pairs
$(F,\iota)$ as in the introduction, where all fields $F$ are Galois extensions
of a given field $K$, then for every positive real number $B$ we define
$\cF_{c\leq B}=\{(F,\iota)\in \cF: c_{F/K}\leq B\}$.

\begin{proposition}
Let $K$ be a number field, let $\bar{K}$ be an algebraic closure of $K$, let $G$ be
a finite abelian group, let $V$ be a finitely generated $\Q[G]$-module, and let $\cF$
be the set of all pairs $(F,\iota)$, where $F\subset \bar{K}$ is a Galois
extension of $K$, and $\iota$ is an isomorphism between the Galois group of
$F/K$ and $G$ that induces an isomorphism $\Q\otimes_{\Z}\cO_F^\times\cong V$
of $\Q[G]$-modules. Assume that $\cF$ is not empty. Let $k\subset \bar{K}$
be a field properly containing $K$. Then the limit
$\lim_{B\to\infty}\#\{(F,\iota)\in \cF_{c\leq B}:k\subset F\}/\#\cF_{c\leq B}$
is zero.
\end{proposition}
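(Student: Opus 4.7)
The plan is as follows. First, note that if $k/K$ is not abelian Galois, then the set $\{(F,\iota) \in \cF : k \subset F\}$ is empty, since any intermediate subfield of an abelian Galois extension of $K$ is itself Galois and abelian over $K$; the limit is then trivially zero. We may therefore assume $k/K$ is abelian Galois, of degree $n = [k:K] \geq 2$.

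For any $(F, \iota) \in \cF$ with $k \subset F$, the isomorphism $\iota$ identifies $\Gal(F/k)$ with a subgroup $H \subset G$ of index $n$ and induces an isomorphism $G/H \cong \Gal(k/K)$. There are only finitely many compatible pairs $(H, G/H \cong \Gal(k/K))$. For each one, class field theory identifies the allowed $F$'s with surjective continuous homomorphisms $\phi\colon \J_K/K^\times \to G$ whose composition with the projection $G \twoheadrightarrow G/H \cong \Gal(k/K)$ equals the fixed surjection $\psi\colon \J_K/K^\times \to \Gal(k/K)$ defining $k$.

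I would then analyse the Dirichlet series
\[
Z(s) = \sum_{(F,\iota) \in \cF} c_{F/K}^{-s}, \qquad Z_k(s) = \sum_{\substack{(F,\iota) \in \cF \\ k \subset F}} c_{F/K}^{-s}.
\]
Both series unpack, via class field theory, as sums over tuples of Hecke characters of $K$ with essentially multiplicative structure in the ramified primes: at each prime $\mathfrak{p}$ of $K$ not dividing $|G|$, the local ramification contribution factors independently. For $Z(s)$, the local factor at such $\mathfrak{p}$ enumerates all local choices realising $F/K$ as a $G$-extension; for $Z_k(s)$, the corresponding factor must additionally lift $\psi$ locally. Since $\psi$ is unramified at all but finitely many $\mathfrak{p}$, the lifting condition forces $\phi|_{I_\mathfrak{p}}$ to land in $H \subsetneq G$, cutting the ``effective rank'' of the character-tuple at each cofinite prime from that of $\widehat G$ to that of $\widehat H$.

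The main obstacle is the quantitative comparison of pole orders. By the classical analysis of Dirichlet series of abelian extensions going back to Wright (compare also \cite{MelanieFair}), one obtains asymptotics $\#\cF_{c\leq B} \sim C B^{\sigma}(\log B)^{\tau-1}$ for explicit $(\sigma, \tau)$ depending only on $G$ and $K$, and $\#\{(F,\iota)\in \cF_{c\leq B}: k\subset F\} = O(B^{\sigma}(\log B)^{\tau'-1})$ with $\tau' < \tau$, the strict inequality reflecting the loss of independent character-freedom at primes unramified in $k/K$. A standard Tauberian argument then gives the desired limit, proving the proposition.
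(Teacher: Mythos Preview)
Your approach is correct in outline and would work, but it takes a substantially more laborious route than the paper. You propose to compare the Dirichlet series $Z(s)$ and $Z_k(s)$ directly, carry out the Wright-style local analysis, verify that the constrained series has strictly smaller pole order (your $\tau'<\tau$), and finish with a Tauberian theorem. This essentially reproves, in the special case at hand, the independence-of-local-behaviour statement that Wood's paper \cite{MelanieFair} already supplies; the verification that $\tau'<\tau$ strictly is not hard (it comes down to the positive contribution of elements of $G\setminus H$ to the average of $|G[N\mathfrak p-1]|-1$ over primes), but making the whole argument rigorous---handling the surjectivity condition by M\"obius inversion, the global reciprocity constraint, the finitely many bad primes, and the archimedean constraint imposed by $V$---is a nontrivial amount of bookkeeping.

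The paper's proof bypasses all of this by using \cite[Theorem~2.1]{MelanieFair} as a black box. One chooses an infinite set $T$ of primes of $K$ that are not totally split in $k/K$; any $F\supset k$ then fails to be totally split at every $\mathfrak p\in T$. Wood's theorem says that for any finite $T'\subset T$ the proportion of $(F,\iota)\in\cF_{c\leq B}$ with $F\otimes_K K_{\mathfrak p}\not\cong K_{\mathfrak p}^{\#G}$ for all $\mathfrak p\in T'$ factors as a product over $\mathfrak p\in T'$, each factor uniformly bounded away from $1$; letting $T'$ exhaust $T$ gives limit $0$. This is a few lines once the reference is invoked, and it never touches pole orders or Tauberian theorems. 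Your method has the merit of being self-contained and of yielding a quantitative saving in a power of $\log B$, but for the bare statement the paper's argument is considerably shorter.
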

\begin{proof}
Let $T$ be an infinite set of prime ideals of $\cO_K$ with odd
residue characteristics that are not totally split in $k/K$. Then for any
$F\subset \bar{K}$ that contains $k$, and for all $\fp \in T$, we have
$F\otimes_K K_{\fp}\not\cong K_{\fp}^{\#G}$ as $K_{\fp}$-algebras, where
$K_{\fp}$ denotes the
field of fractions of the completion of $\cO_K$ at $\fp$. It follows that
for all positive real numbers $B$, we have
$$
\#\{(F,\iota)\in \cF_{c\leq B}:k\subset F\} \leq
\#\{(F,\iota)\in \cF_{c\leq B}: \forall\fp\in T\colon F\otimes_K \!K_{\fp}\not\cong K_{\fp}^{\#G}\}.
$$
By \cite[Theorem 2.1]{MelanieFair}, for every finite subset
$T'$ of $T$, each of the following limits exists, and there is an equality
\begin{eqnarray*}
\lefteqn{\lim_{B\to\infty}\frac{\#\{(F,\iota)\in \cF_{c\leq B}: \forall\fp\in T'\colon F\otimes_K \!K_{\fp}\not\cong K_{\fp}^{\#G}\}}{\#\cF_{c\leq B}}=}\\
& & \prod_{\fp\in T'}\left(\lim_{B\to \infty}
\frac{\#\{(F,\iota)\in \cF_{c\leq B}: F\otimes_K \!K_{\fp}\not\cong K_{\fp}^{\#G}\}}{\#\cF_{c\leq B}}\right);
\end{eqnarray*}
moreover, \cite[Theorem 2.1]{MelanieFair} also implies that each of the factors on the right hand side is bounded
away from $1$ uniformly for all $\fp\in T$. Thus, we have
\begin{eqnarray*}
\lefteqn{\limsup_{B\to\infty}\frac{\#\{(F,\iota)\in \cF_{c\leq B}:k\subset F\}}{\#\cF_{c\leq B}}
\leq}\\
& & \prod_{\fp\in T}\left(\lim_{B\to \infty}
\frac{\#\{(F,\iota)\in \cF_{c\leq B}: F\otimes_K \!K_{\fp}\not\cong K_{\fp}^{\#G}\}}{\#\cF_{c\leq B}}\right)=0,
\end{eqnarray*}
as claimed.
\end{proof}


\section{Reasonable functions}\label{sec:conj}\noindent
In the present section we address the question of which functions may
qualify as ``reasonable'' for the purposes of Conjecture \ref{conj:ourconj}.
We begin by proving Theorem \ref{thm:introBjorn}, which suggests that demanding
that $\bE(|f|)$ should exist is likely not a sufficient criterion.
After that, we offer two possible interpretations of the word
``reasonable''.

From now on, let $(X,p)$ be an infinite discrete probability space such that
$p(x)>0$ for all $x\in X$, where we recall from the introduction
that $p(x)$ is shorthand for $p(\{x\})$, and let $Y$ be the probability space
$X^{\Z_{\geq 1}}$ with the induced product probability measure; see e.g. \cite[\S 38]{Halmos}.
When referring to subsets of a measure space, we will say ``almost all''
to mean a subset whose complement has measure $0$.

Let us reformulate Theorem \ref{thm:introBjorn}, using the above notation.
\begin{theorem}\label{thm:Bjorn}
For almost all sequences $y=(y_i)_i\in Y$,
there exists a function $f\colon X\rar\R_{\geq 0}$ whose expected value $\bE(f)$
is finite, but for which the average $\lim_{n\to\infty}\frac1n\sum_{i=1}^nf(y_i)$
of $f$ on $y$ does not exist in $\R$.
\end{theorem}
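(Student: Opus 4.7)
The plan is to exhibit, for almost every $y=(y_i)\in Y$, a function $f\colon X\to\R_{\geq 0}$ with finite expected value $\sum_{x\in X}p(x)f(x)$ whose Ces\`aro averages along $y$ are unbounded. The entire construction rests on one probabilistic lemma: for almost every $y$, one has $\liminf_{n\to\infty} n\,p(y_n)=0$; informally, the sampled atom is, infinitely often, strikingly improbable relative to the position $n$ at which it appears.

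I would prove the lemma by a Borel--Cantelli argument. For each $\epsilon>0$, the events $E_n=\{p(y_n)<\epsilon/n\}$ are independent because the $y_n$ are i.i.d., and a Fubini swap yields
$$
\sum_{n=1}^{\infty} P(E_n) \;=\; \sum_{x\in X} p(x)\cdot\#\{n\geq 1 : n\,p(x)<\epsilon\} \;=\; \sum_{x\in X} p(x)\left\lfloor\tfrac{\epsilon}{p(x)}\right\rfloor.
$$
Each $x$ with $p(x)<\epsilon$ contributes at least $\epsilon-p(x)$ to the right-hand sum, and since $X$ is infinite only finitely many $x$ fail $p(x)<\epsilon$, so the right-hand side diverges. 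The second Borel--Cantelli lemma then forces $n\,p(y_n)<\epsilon$ to hold infinitely often, a.s.; applying this to $\epsilon\in\{1/k:k\geq 1\}$ yields the lemma on a common full-measure set.

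Given the lemma, for each such $y$ I would extract inductively a strictly increasing sequence $n_1<n_2<\cdots$ with $n_k\,p(y_{n_k})<1/(k\,2^k)$. The additional requirement that the elements $y_{n_1},y_{n_2},\ldots$ be pairwise distinct is automatic once $n_k$ is large, since the defining constraint forces $p(y_{n_k})$ below the minimum of the finitely many prescribed positive values $p(y_{n_1}),\ldots,p(y_{n_{k-1}})$. I then set $f(y_{n_k})=k\,n_k$ and $f(x)=0$ otherwise. Finiteness of the expected value and divergence of the partial averages along $(n_k)$ are then both immediate:
$$
\sum_{x\in X}p(x)f(x)\;=\;\sum_{k\geq 1}k\,n_k\,p(y_{n_k})\;<\;\sum_{k\geq 1}2^{-k}\;=\;1,
\qquad
\tfrac{1}{n_k}\sum_{i=1}^{n_k}f(y_i)\;\geq\;\tfrac{f(y_{n_k})}{n_k}\;=\;k.
$$

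The only real content is the Borel--Cantelli count in the second paragraph; the subtle point there is that one cannot merely invoke the formal identity $\sum_x p(x)\cdot(1/p(x))=\infty$, but must instead extract the lower bound $\lfloor \epsilon/p(x)\rfloor\geq \epsilon/p(x)-1$ and use the hypothesis that $X$ is infinite to ensure infinitely many positive summands after discarding the finitely many atoms with $p(x)\geq\epsilon$. Everything else --- the extraction of the subsequence, the distinctness of the $y_{n_k}$, and the two final estimates for $f$ --- is bookkeeping.
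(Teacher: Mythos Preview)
Your proof is correct and follows the same overall strategy as the paper: show that almost every sequence has infinitely many ``early appearances'' (atoms $x$ occurring at position $\lesssim \epsilon/p(x)$), then define $f$ to be large on those atoms so that $\bE(f)$ converges but the partial averages blow up along a subsequence.

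The execution of the probabilistic lemma differs in an interesting way. The paper indexes its events by atoms: it considers $E_x=\{y_i\neq x\text{ for all }i\leq \epsilon/p(x)\}$, notes that the probability of $E_x$ is bounded away from $1$, argues that the $E_x$ are negatively correlated, and concludes that $\bigcap_{x\notin U}E_x$ has probability $0$ for every finite $U$. This delivers distinct atoms $x_1,x_2,\dots$ directly. You instead index by position, setting $E_n=\{p(y_n)<\epsilon/n\}$, and invoke the second Borel--Cantelli lemma on the genuinely independent events $E_n$; this is cleaner, but the positions you extract need not correspond to distinct atoms, which is why you need the extra observation that choosing $n_k$ large forces $p(y_{n_k})$ below $\min_{j<k}p(y_{n_j})$ and hence $y_{n_k}\notin\{y_{n_1},\dots,y_{n_{k-1}}\}$. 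One cosmetic point: $\#\{n\geq 1:np(x)<\epsilon\}$ equals $\lfloor\epsilon/p(x)\rfloor$ only when $\epsilon/p(x)\notin\Z$, but your lower bound $\epsilon-p(x)$ and the divergence argument are unaffected.
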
\noindent
The idea of the proof will be to show that, for a typical $(y_i)_i\in Y$,
there are many elements $x\in X$ that occur much earlier in $(y_i)_i$ than
one would expect. The function $f$ then gives those elements a large weight.
\begin{proposition}\label{prop:manyx}
For almost all sequences $y=(y_i)_i\in Y$ it is true that for all
$\epsilon\in \R_{>0}$ there exist infinitely many $x\in X$ such that for some
$i\leq \epsilon/p(x)$, one has $y_i=x$.
\end{proposition}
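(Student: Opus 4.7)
Proof plan. Fix $\epsilon \in \R_{>0}$, and for each $x \in X$ put $N_x = \lceil \epsilon/p(x)\rceil$ and
\[
A_x^\epsilon = \{(y_i)_i \in Y : y_i = x \text{ for some } 1 \leq i \leq N_x\}.
\]
The strategy is to show, for each fixed $\epsilon$, that almost every sequence $y$ lies in infinitely many of the events $A_x^\epsilon$, and then to intersect the resulting probability-one events over the countable collection $\epsilon \in \{1/k : k \in \Z_{\geq 1}\}$.

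The first step is to bound $\sum_x p(A_x^\epsilon)$ from below, where $p$ also denotes the induced product measure on $Y$. Since $\sum_x p(x) = 1$ and $X$ is infinite, $p(x) \leq 1/2$ for all but finitely many $x$; for such $x$ one has $N_x p(x) \geq \epsilon$, so
\[
p(A_x^\epsilon) = 1 - (1-p(x))^{N_x} \geq 1 - e^{-\epsilon},
\]
a positive constant independent of $x$. Hence $\sum_x p(A_x^\epsilon) = \infty$.

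The second step is to check pairwise negative correlation of the family $(A_x^\epsilon)_{x \in X}$. For $x \neq x'$ with $N_x \leq N_{x'}$, the product structure of $Y$ gives
\[
p\bigl((A_x^\epsilon)^c \cap (A_{x'}^\epsilon)^c\bigr) = (1-p(x)-p(x'))^{N_x}(1-p(x'))^{N_{x'}-N_x},
\]
and the elementary bound $1-a-b \leq (1-a)(1-b)$ for $a, b \geq 0$ shows that this is at most $p((A_x^\epsilon)^c)\,p((A_{x'}^\epsilon)^c)$; equivalently, $p(A_x^\epsilon \cap A_{x'}^\epsilon) \leq p(A_x^\epsilon)\,p(A_{x'}^\epsilon)$. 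Enumerating $X = \{x_1, x_2, \ldots\}$ and setting $S_N = \sum_{n \leq N}\mathbf{1}_{A_{x_n}^\epsilon}$, negative correlation gives $\mathrm{Var}(S_N) \leq \bE(S_N) \to \infty$, so Chebyshev's inequality yields $p(S_N \geq \bE(S_N)/2) \to 1$. Therefore $S_\infty = \infty$ almost surely, i.e. almost every $y$ lies in infinitely many $A_x^\epsilon$.

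Finally, intersecting the resulting probability-one events as $\epsilon$ ranges over $\{1/k : k \in \Z_{\geq 1}\}$ gives a probability-one subset of $Y$ on which the conclusion holds for every such $\epsilon$, and the assertion for an arbitrary $\epsilon > 0$ then follows by choosing $k$ with $1/k \leq \epsilon$ and noting that $N_x$ is monotone in $\epsilon$. The main obstacle is the negative-correlation step: the events $A_x^\epsilon$ are not pairwise independent, so the classical second Borel--Cantelli lemma does not apply directly, and one must rely instead on the second-moment version; once the correlation inequality is in hand, the conclusion is a routine Chebyshev argument.
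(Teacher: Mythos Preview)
Your argument is correct. The overall architecture matches the paper's: for each fixed $\epsilon$ show that almost surely infinitely many of the events $A_x^\epsilon$ occur, then intersect over countably many $\epsilon$. The difference lies in how you extract ``infinitely many $A_x^\epsilon$ a.s.'' from the negative correlation. The paper uses not just the pairwise inequality for the complements $E_x = (A_x^\epsilon)^c$, but the stronger fact that $p(E_{x_k}\mid E_{x_1}\cap\cdots\cap E_{x_{k-1}})\leq p(E_{x_k})$ (justified by the same computation you carry out, iterated to $k$ events); this immediately gives $p\bigl(\bigcap_{x\notin U}E_x\bigr)\leq \prod_{x\notin U}p(E_x)=0$ for every finite $U\subset X$, and one finishes by a countable union over $U$. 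Your route instead feeds the pairwise inequality into a variance bound and Chebyshev. The paper's version is a touch shorter and avoids second moments; yours is the textbook ``Borel--Cantelli under negative correlation'' argument and makes the probabilistic mechanism explicit. One minor quibble: taking $N_x=\lceil\epsilon/p(x)\rceil$ makes your $A_x^\epsilon$ slightly larger than the event in the statement, so in the final monotonicity step you should choose $1/k<\epsilon$ strictly and note that $\lceil(1/k)/p(x)\rceil\leq \epsilon/p(x)$ holds for all but finitely many $x$; alternatively, define $N_x=\lfloor\epsilon/p(x)\rfloor$ from the outset, which costs nothing.
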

\begin{proof}
Let $\epsilon\in \R_{>0}$ be given, and let $U$ be a finite subset of $X$.
First, we claim that for almost all sequences $y=(y_i)_i\in Y$ there exists
$x\in X\setminus U$ such that for some $i\leq \epsilon/p(x)$, one has
$y_i=x$. If $x$ is an element of $X$, let $E_x$ be the set of $y=(y_i)_i\in Y$
for which $y_i\neq x$ for all $i\leq \epsilon/p(x)$. The probability of the
event $E_x$, meaning the measure of $E_x\subset Y$, is equal to
$(1-p(x))^{\lfloor \epsilon/p(x)\rfloor}$, which tends to $e^{-\epsilon}$, as
$p(x)$ tends to $0$, and in particular is uniformly bounded away from $1$ for
all but finitely many $x\in X$. If $x_1$, $x_2$, \ldots, $x_k$ are distinct
elements of $X$, then the events $E_{x_i}$ are generally not independent, but the
probability of $E_{x_k}$ given that all of $E_{x_1}$, \ldots, $E_{x_{k-1}}$
occur is clearly less than or equal to the probability of $E_{x_k}$. It
follows that the probability that $E_x$ occurs for all $x\in X\setminus U$ is at
most $\prod_{x\in X\setminus U}(1-p(x))^{\lfloor \epsilon/p(x)\rfloor} = 0$. This
proves the claim.

Since the number of finite subsets $U$ of $X$ is countable, it follows from
countable subadditivity that if $\epsilon\in \R_{>0}$ is given, then
for almost all $y=(y_i)_i\in Y$ it is true that for
all finite subsets $U$ of $X$ there exists $x\in X\setminus U$ such that for some
$i\leq \epsilon/p(x)$, one has $y_i=x$. This implies that if $\epsilon\in \R_{>0}$
is given, then for almost all $y=(y_i)_i\in Y$ there exist infinitely many $x\in X$
such that for some $i\leq \epsilon/p(x)$, one has $y_i=x$. By applying this
conclusion to countably infinitely many $\epsilon_n\in\R_{>0}$ in place of
$\epsilon$, where $(\epsilon_n)_{n\in \Z_{\geq 1}}$ is a sequence converging to $0$,
and by invoking countable subadditivity again, we deduce the proposition.
\end{proof}\noindent
%
%
We are now ready to prove Theorem \ref{thm:Bjorn}.
\begin{proof}[Proof of Theorem \ref{thm:Bjorn}]
Let $y=(y_i)_i\in Y$ be a sequence for which the conclusion of Proposition \ref{prop:manyx}
holds. Then there is a sequence $x_1$, $x_2$, $\ldots$ of distinct elements of
$X$ such that for each $n\in \Z_{\geq 1}$, we have $\min\{i\in \Z_{\geq 1}: y_i=x_n\} \leq
n^{-3}/p(x_{n})$.

For $n\in \Z_{\geq 1}$, let $i(n)=\min\{j\in \Z_{\geq 1}: y_j=x_n\}
\leq n^{-3}/p(x_{n})$. For $x\in X$, define $f(x)=0$ if $x\neq x_n$ for any $n\in \Z_{\geq 1}$,
and $f(x_n)=n^{-2}/p(x_n)$. Then we have $\bE(f)=\sum_{n\in \Z_{\geq 1}}n^{-2}$,
which converges. On the other hand, for every $n\in \Z_{\geq 1}$, one has
$$
\frac{1}{i(n)}\sum_{j=1}^{i(n)}f(y_j)\geq\frac{f(x_n)}{i(n)}\geq \frac{n^{-2}/p(x_n)}{n^{-3}/p(x_{n})}=n,
$$
which gets arbitrarily large, as $n$ varies, so the limit
$\lim_{n\to \infty}\frac{1}{n}\sum_{j=1}^nf(y_j)$ does not exist.
\end{proof}\noindent
Let us now discuss two possible interpretations of the word ``reasonable'' in
Conjecture \ref{conj:ourconj}. As will hopefully become clear, this section should be treated
as an invitation to the reader to join in our speculations.
If $\cM_V$ is as in Conjecture \ref{conj:ourconj},
let $f\colon \cM_V\to \C$ be a function. For a positive integer $j$, the $j$-th
moment of $f$ is defined to be $\sum_{M\in \cM_V}(f(M)^j\bP(M))$ if the sum
converges absolutely.

\begin{conjecture}[Supplement 1 to Conjecture \ref{conj:ourconj}]\label{conj:suppl1}
If $f\colon \cM_V\to \C$ is a function such that for all $j\in \Z_{\geq 1}$,
the $j$-th moment of $|f|$ exists in $\R$, then $f$ is ``reasonable'' for the purposes
of Conjecture \ref{conj:ourconj}.
\end{conjecture}\noindent
All bounded functions on $\cM_V$, and many unbounded functions of arithmetic
interest satisfy this condition.
This applies to all examples in \cite{CM2}, with the exception of the
functions $f(M)=\#M$, and $f(M)=(\#M)^2$, which often have expected value $\infty$.
Note that in \cite{CM2} the set $S$ of prime numbers was not assumed to
be finite. Here, when we talk about the examples in \cite{CM2}, we mean the
analogues in our setting of the functions considered there.
On the other hand, it can be shown that if $X=\cM_V$, $A\neq 0$, and $S$ is
non-empty, so that $\cM_V$ is infinite, then for the function constructed in
the proof of Theorem \ref{thm:Bjorn} the second moment does not exist.

Let us call a class $\Funcclass$ of $\C$-valued functions on $X$ \emph{promising}
if for all $f\in \Funcclass$, the expected value $\bE(f)$ exists, and for
almost all sequences $y=(y_i)_i\in Y$ it is true that
for all $f\in \Funcclass$ we have $\lim_{n\to\infty}\frac1n\sum_{i=1}^nf(y_i)=\bE(f)$.
By the law of large numbers, for any function $f\colon X\to \C$ for which
$\bE(f)$ exists, the set $\Funcclass = \{f\}$ is promising. It immediately follows
that any countable set $\Funcclass$ of such functions is promising. On the other
hand, Theorem \ref{thm:Bjorn} implies that the class of all functions $f\colon X\to \C$
for which $\bE(f)$ exists is not promising.
An affirmative answer to the following question would strengthen our confidence
in Conjecture \ref{conj:suppl1}.
\begin{question}\label{qn:moments}
Is the class of all functions $f\colon \cM_V\to \C$ for which for all $j\in \Z_{\geq 1}$,
the $j$-th moment of $|f|$ exists in $\R$ promising?
\end{question}\noindent
We have the following weak result in this direction.
\begin{proposition}
The class of all bounded $\C$-valued functions on $X$ is promising.
\end{proposition}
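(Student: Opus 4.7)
The plan is to reduce the uniformity over all bounded functions to the uniformity over the countable collection of indicator functions $\mathbf{1}_{\{x\}}$ for $x\in X$. More precisely, for each $x\in X$, the classical strong law of large numbers applied to the i.i.d.\ $\{0,1\}$-valued random variables $\mathbf{1}_{\{x\}}(y_j)$ yields a null set $N_x\subset Y$ outside of which $\frac1n\#\{j\leq n: y_j=x\}\to p(x)$. Since $X$ is countable, $N=\bigcup_{x\in X}N_x$ is a null set; and outside $N$ the empirical frequencies of \emph{all} elements of $X$ simultaneously converge to their true probabilities. I will show that any $y\in Y\setminus N$ works for every bounded $f$.

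Fix such a $y=(y_j)_j$, fix a bounded $f\colon X\to\C$ with $|f|\leq M$, and let $\epsilon>0$. Since $\sum_{x\in X}p(x)=1$, choose a finite subset $F\subset X$ with $p(X\setminus F)<\epsilon/(3M)$. Split
$$
\frac{1}{n}\sum_{j=1}^n f(y_j)=\sum_{x\in F}f(x)\cdot\frac{\#\{j\leq n:y_j=x\}}{n}+R_n,
$$
with $|R_n|\leq M\cdot\frac{\#\{j\leq n:y_j\notin F\}}{n}$, and similarly $\bE(f)=\sum_{x\in F}f(x)p(x)+R$, with $|R|\leq M\cdot p(X\setminus F)$. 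The triangle inequality then yields
$$
\left|\frac{1}{n}\sum_{j=1}^nf(y_j)-\bE(f)\right|\leq \sum_{x\in F}|f(x)|\cdot\left|\frac{\#\{j\leq n:y_j=x\}}{n}-p(x)\right|+|R_n|+|R|.
$$
As $n\to\infty$ the first sum tends to $0$ because $F$ is finite and each empirical frequency converges; and since $\frac{\#\{j\leq n:y_j\notin F\}}{n}=1-\sum_{x\in F}\frac{\#\{j\leq n:y_j=x\}}{n}\to p(X\setminus F)$, we also have $|R_n|\to M\cdot p(X\setminus F)$. Hence the limit superior of the left-hand side is at most $2M\cdot p(X\setminus F)<2\epsilon/3<\epsilon$, and letting $\epsilon\to 0$ proves the convergence for this particular $f$.

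Condition (i) of being promising (finiteness of $\bE(f)$) is trivial for bounded $f$, and the analysis above establishes condition (ii) on the single null complement $Y\setminus N$, uniformly over all bounded $f$. No real obstacle is expected: the only subtle point is that one cannot naively apply the strong law of large numbers once ``per function'' (the class of bounded functions is uncountable, so the union of exceptional null sets need not be null), and the trick above is precisely to exchange this uncountable union for the countable union over $x\in X$ by approximating $f$ by its restriction to a large finite subset.
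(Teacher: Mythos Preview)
Your argument is correct and follows essentially the same approach as the paper's proof: both apply the strong law of large numbers to the empirical frequencies $\tfrac{1}{n}\#\{j\leq n:y_j=x\}$, exploit the countability of $X$ to obtain a single full-measure set on which all of these converge simultaneously, and then approximate an arbitrary bounded $f$ by its restriction to a large finite subset of $X$. Your organization is in fact a bit more streamlined than the paper's (which layers the construction over sequences $\epsilon_n\to 0$ and $c_n\to\infty$); the only slip is the phrase ``$|R_n|\to M\cdot p(X\setminus F)$'', which should read $\limsup_n |R_n|\leq M\cdot p(X\setminus F)$, but this does not affect the conclusion.
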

\begin{proof}
For any $c\in \R_{>0}$, let $\Funcclass_c$ be the class of all functions
$f\colon X\to \C$ for which $\sup_{x\in X}|f(x)|\leq c$. We will
first show that for all $c\in\R_{>0}$, the class $\Funcclass_c$ is
promising.

Let $X_1\subset X_2\subset \ldots$ be a sequence of finite subsets of $X$ such
that $\lim_{j\to \infty}\sum_{x\in X_j}p(x)=1$. Fix $c\in \R_{>0}$,
and let $\epsilon\in \R_{>0}$ be arbitrary. Then we may choose
$X_{j(\epsilon)}$ such that $\sum_{x\in X\setminus X_{j(\epsilon)}}p(x)\leq \epsilon/c$.
Moreover, by the strong law of large numbers, there exists a subset
$Y_c(\epsilon)\subset Y$ of measure $1$ with the following property:
for all $y=(y_i)_i\in Y_c(\epsilon)$
there is an $N_y(\epsilon)\in \Z_{\geq 1}$ such that for all $x\in X_{j(\epsilon)}$
and for all $n\geq N_y(\epsilon)$ one has
$|\frac{1}{n}\cdot\#\{i\leq n:y_i=x\}-p(x)| \leq \epsilon/(c\cdot\#X_{j(\epsilon)})$.
It follows that for all $y\in Y_c(\epsilon)$, for all $f\in \Funcclass_c$, and
for all $n\geq N_y(\epsilon)$ one has
\begin{eqnarray*}
\lefteqn{\left|\frac{1}{n}\sum_{i=1}^nf(y_i)-\bE(f)\right| =
\left|\sum_{x\in X}\left(\tfrac{1}{n}\cdot\#\{i\leq n:y_i=x\}-p(x)\right)\cdot f(x)\right|}\\
& \leq &
\sum_{x\in X_{j(\epsilon)}}\left|\left(\tfrac{1}{n}\cdot\#\{i\leq n:y_i=x\}-p(x)\right)\cdot f(x)\right|\\
& & +\sum_{x\in X\setminus X_{j(\epsilon)}}\left|\left(\tfrac{1}{n}\cdot\#\{i\leq n:y_i=x\}-p(x)\right)\cdot f(x)\right|\\
& \leq & \epsilon + \sum_{x\in X\setminus X_{j(\epsilon)}}\tfrac{1}{n}\cdot\#\{i\leq n:y_i=x\}\cdot|f(x)|
+ \sum_{x\in X\setminus X_{j(\epsilon)}}p(x)|f(x)|\\
& \leq & \epsilon + \Big(1-\sum_{x\in X_{j(\epsilon)}}\tfrac{1}{n}\cdot\#\{i\leq n:y_i=x\}\Big)\cdot c
+ \epsilon\\
& \leq & 2\epsilon + \Big(1-\sum_{x\in X_{j(\epsilon)}}\big(p(x)-\frac{\epsilon}{c\cdot\#X_{j(\epsilon)}}\big)\Big)\cdot c\\
& \leq & 4\epsilon.
\end{eqnarray*}
Therefore, if $(\epsilon_n)_{n\in \Z_{\geq 1}}$ is a sequence of positive real numbers converging to
$0$, then the intersection $Y_c=\bigcap_{n\in \Z_{\geq 1}} Y_c(\epsilon_n)$ has measure $1$ and
has the property that for every $y=(y_i)_i\in Y_c$ and for every
$f\in \Funcclass_c$ one has $\lim_{n\to \infty}\tfrac{1}{n}\sum_{i=1}^nf(y_i)=\bE(f)$.

If $(c_n)_{n\in\Z_{\geq 1}}$ is a sequence of positive real numbers tending to $\infty$, then
the class of all bounded functions is equal to $\bigcup_{n\in \Z_{\geq 1}} \Funcclass_{c_n}$,
and the intersection $\bigcap_{n\in \Z_{\geq 1}} Y_{c_n}$ has measure $1$ and has the property
that for every sequence $y=(y_i)_i$ in this intersection and for every bounded
function $f$ one has $\lim_{n\to \infty}\tfrac{1}{n}\sum_{i=1}^nf(y_i)=\bE(f)$.
This completes the proof.
\end{proof}\noindent
We will now describe a completely different approach to the question of
reasonableness, which is based on the idea that one can distinguish between
``highly artificial'' functions, such
as those that are constructed in the proof of Theorem \ref{thm:Bjorn}, and
``natural'' functions that one cares about in practice by the ease with which
they can be computed.

Let $A$, $S$, $R$, and $V$ be as in Conjecture \ref{conj:ourconj}, and suppose
that $S$ is non-empty and $A\neq 0$. Let $Q$ be the
set of non-increasing sequences $(n_i\in \Z_{\geq 0}: i \in \Z_{\geq 0})$
that have only finitely many non-zero terms. Let $\Maxspec(\Zz(R))$
be the finite set of maximal ideals of the centre $\Zz(R)$ of $R$. It follows
from \cite[Lemme 2.7]{CM} that the set $\cM$ is canonically in bijection
with the set $Q^{\Maxspec(\Zz(R))}$, and by Lemma \ref{lem:uniqueproj}
we also have a bijection between $\cM_V$ and $Q^{\Maxspec(\Zz(R))}$.

\begin{conjecture}[Supplement 2 to Conjecture \ref{conj:ourconj}]\label{conj:suppl2}
If $f\colon \cM_V\to \Z$ is a function such that $\bE(|f|)$ exists in $\R$, and
the function
$Q^{\Maxspec(\Zz(R))}\to~\Z$ induced by $f$ is computable in polynomial time, where
the input is given in unary notation, then $f$ is
``reasonable'' for the purposes of Conjecture \ref{conj:ourconj}.
\end{conjecture}\noindent
The functions that one typically cares about in practice, including all those
given as examples in \cite{CM2} for which $S$ is finite,
are computable in polynomial time. On the other hand, we do not expect
the function that was constructed in the proof of Theorem \ref{thm:Bjorn} for
$y$ being the sequence of class groups in a family of number fields to be computable
in polynomial time. Indeed, to define $f(M)$, one needs to know roughly the first
$\#\Aut(M)$ terms of the sequence $y$, and $\#\Aut(M)$ is exponential in
the size of the input.

\end{document}